\documentclass[11pt]{article}
\usepackage{amsmath,amssymb,amsthm}
\usepackage{natbib}
\usepackage{graphicx}
\usepackage{titling}
\usepackage{xcolor}
\usepackage{pgfplots}
\usepackage{subcaption}
\usepackage{float}
\usepackage{nameref}
\usepackage{setspace}
\usepackage{titling}
\usepackage{bm}
\usepackage[skip=0.5pt]{caption}
\usepackage[font=small, labelfont=bf, textfont=it]{caption}
\usepackage[normalem]{ulem}
\usepackage[
    colorlinks=true,
    citecolor=blue,   
    linkcolor=blue,    
    urlcolor=blue     
]{hyperref}
\usepackage[
  a4paper,
  left=3cm,
  right=2.5cm,
  top=2.5cm,
  bottom=2.5cm
]{geometry}

\newcommand{\bX}{\boldsymbol{X}}

\newcommand{\PP}{\mathbb{P}}
\newcommand{\RR}{\mathbb{R}}
\newcommand{\EE}{\mathbb{E}}
\newcommand{\Xtil}{\tilde{X}}

\newcommand{\calH}{\mathcal{H}}
\newcommand{\calS}{\mathcal{S}}
\newcommand{\calN}{\mathcal{N}}

\newcommand{\FDP}{\text{FDP}}
\newcommand{\TPP}{\text{TPP}}
\newcommand{\fdp}{\text{fdp}}
\newcommand{\tpp}{\text{tpp}}
\newcommand{\FDR}{\text{FDR}}
\newcommand{\TPR}{\text{TPR}}

\newcommand{\TL}{\text{TL}}
\newcommand{\lasso}{\text{L}}
\newcommand{\cv}{\text{cv}}

\newcommand{\FPR}{\text{FPR}}

\newcommand{\lfdr}{\text{lfdr}}

\DeclareMathOperator*{\argmin}{arg\,min}

\newtheorem{theorem}{Theorem}[section]
\newtheorem{lemma}[theorem]{Lemma}
\newtheorem{proposition}[theorem]{Proposition}

\theoremstyle{definition}

\theoremstyle{remark}
\newtheorem{assumption}[]{Assumption}

\theoremstyle{remark}
\newtheorem{remark}[theorem]{Remark}

\newtheoremstyle{named}%
  {}{}            
  {\itshape}      
  {}              
  {\bfseries}     
  {.}             
  {.5em}          
  {\thmname{#1}\thmnote{ \textnormal{(#3)}}} 

\theoremstyle{named}
\newtheorem*{namedtheorem}{Theorem}

\title{Empirical Bayes Variable Selection with Lasso Statistics in the AMP Framework}

\author{
Lina Hidmi \qquad\qquad Asaf Weinstein\\[0.6em]
{\em Hebrew University of Jerusalem}\thanks{Emails: lina.hidmi@mail.huji.ac.il, asaf.weinstein@mail.huji.ac.il}\\[1em]
}

\date{}

\begin{document}

\maketitle

\begin{abstract}
The Lasso is one of the most ubiquitous methods for variable selection in high-dimensional linear regression and has been studied extensively under different regimes. In a particular asymptotic setup entailing $n/p\to \text{constant}$, an i.i.d.~Gaussian $X$ matrix and linear sparsity, \citet{su2017false} analyzed the Lasso selection path and presented negative results, showing that maintaining small levels of the false discovery proportion comes at a substantial cost in power. Followup work by \citet{wang2020bridge} used the same framework to study the tradeoff between type I error and power for thresholded-Lasso selection, which ranks the variables based on the magnitude of the Lasso estimate instead of the order of appearance on the Lasso path, and demonstrated that significant improvements are possible if the regularization parameter is chosen appropriately. 
We take this line of research a step further, seeking an {\em optimal} selection procedure in the AMP framework among procedures that order the variables by some univariate function of the Lasso estimate at a fixed value $\lambda$ of the regularization term. Observing that the model for the Lasso estimates effectively reduces asymptotically to a version of the well-studied two-groups model, we propose an empirical Bayes variable selection procedure based on an estimate of the local false discovery rate. We extend existing results in the AMP framework to obtain exact predictions for the curve describing the asymptotic tradeoff between type I error and power of this procedure. Additionally, we prove that the optimal $\lambda$ is the minimizer of the asymptotic mean squared error, and accordingly propose to use the empirical Bayes procedure with $\lambda$ estimated by cross-validation. The theoretical predictions imply that the gains in power can be substantial, and we confirm this by numerical studies under different settings.
\end{abstract}

\section{Introduction}
\label{sec:intro}

Consider the Gaussian linear regression model, 
\begin{equation} 
\label{eq:linear_model}
Y = \boldsymbol{X} \beta + \zeta, \quad \quad \zeta \sim \mathcal{N}_n(0, \sigma^2 \boldsymbol{I}), 
\end{equation}
where \( \boldsymbol{X} \in \mathbb{R}^{n \times p} \) is the observed matrix of predictor  measurements for \( p \) predictors on each of \( n \) subjects, and where the coefficient vector $\beta = (\beta_1, \beta_2, \dots, \beta_p)^\top$ and the noise level $\sigma^2$ are unknown. 
In modern applications such as genomics, image analysis, and text mining \citep{varoquaux2012small, yamashita2008sparse, gramfort2011generalized, beer2018incorporating} 
the number of available predictor variables $p$ is typically  large, sometimes larger than the sample size $n$, but it is often assumed {\em a priori} that the majority of them are  irrelevant, i.e.~that the corresponding $\beta_i$ are essentially zero. 
In these sparse settings a main objective is to perform variable selection, i.e.~to estimate the subset of predictors that truly affect the (conditional) distribution of the response. 

In the noisy model \eqref{eq:linear_model} no variable selection procedure is actually expected, for finite $n$ and $p$, to perfectly recover the subset of relevant variables, but the quality of a procedure can be assessed based on the degree to which it is able to select truly relevant variables, while avoiding selection of too many irrelevant ones. 
The {\em controlled} variable selection problem \citep{candes2018panning} formally treats variable selection as a multiple testing problem, specifically, that of simultaneously testing the $p$ null hypotheses 
\[
H_{i}: \beta_i = 0, \quad \quad i \in \calH \equiv \{1, 2, \dots, p\}, 
\]
so that selecting a variable amounts to rejecting the corresponding null. 
Let \( \calH_0 \equiv \{ i : \beta_i = 0 \} \) denote the (unknown) subset of true null hypotheses, and \( \calS \equiv \calH \setminus \calH_0 \) denote the subset of true non-null hypotheses. 
For any variable selection procedure that outputs an estimate \( \hat{\calS} \subseteq \{1, 2, \dots, p\} \) of \( \calS \), define the {\em false discovery proportion} (FDP) as the fraction of true nulls among the rejections, and the {\em true positive proportion} (TPP) as the fraction of rejections among the true non-nulls, 
\[
\text{FDP} = \frac{|\hat{\calS} \cap \calH_0|}{|\hat{\calS}|}, \quad \quad
\text{TPP} = \frac{|\hat{\calS} \cap \calS|}{|\calS|},
\]
where $| \mathcal{A}| $ is the cardinality of a set $\mathcal{A}$, and where we use the convention $0/0\equiv 0$. 
Taking expectations on these two random fractions under \eqref{eq:linear_model}, we obtain the {\em false discovery rate} (FDR) 
and the {\em true positive rate} (TPR), 
\[
\text{FDR} = \EE [\FDP], \quad \quad
\text{TPR} = \EE [\TPP],
\]
which are specific metrics for the type I error and, respectively, for the power of the procedure. 
An effective selection procedure achieves high TPR while keeping FDR small. 
Consistent with the classic multiple testing viewpoint, which treats type I and type II errors asymmetrically, 
in the controlled variable selection framework we will say that a procedure is {\em valid} at level $q\in [0,1]$ if it has $\FDR\leq q$ for all values of the parameter $\beta$, and the task would generally be to maximize (in some applicable sense) the TPR subject to validity. 

Many variable selection procedures can be represented as operating as follows. 
First, for each variable $i=1,...,p$, calculate an {\em importance statistic} $T_i$, with larger values presumably indicating more evidence against $H_i$.
The choice of the importance statistic now defines the {\em selection path} of the procedure, the set-valued mapping which associates each fixed threshold $t\in \RR$ with the subset
\begin{equation}
\label{eq:threshold-selection}
\hat{\calS}(t)\ := \ \{i\leq p: \; T_i> t\}. 
\end{equation}
We remark that, although the convention above will be used by default (unless noted otherwise), 
there is no essential distinction between procedures that select for large versus small values of the statistic $T_i$; thus, from here on, whenever referring to procedures of the form \eqref{eq:threshold-selection} we mean procedures that reject for either small $T_i$ or for large $T_i$, and we switch between the two conventions as convenient. 
When the threshold $t$ in \eqref{eq:threshold-selection} varies from large to small values, 
the selection path defines an increasing trajectory of candidate subsets of variables. 
Now that the  path is determined, the procedure  needs to decide where to stop including additional variables by specifying a data-dependent threshold $\hat{t}$, resulting in the ultimately selected subset 
$
\hat{\calS} = \hat{\calS}(\hat{t}) = \{i\leq p: T_i\ > \hat{t}\}. 
$
In this view of a selection procedure, the specification of $\hat{t}$ is responsible for {\em calibrating} the procedure for type I errors, whereas the choice of $T_i$ is responsible for the {\em ordering} of the variables, and therefore controls the power of the procedure. 
Much of the existing literature on controlled variable selection focuses on calibration, which is an important and challenging problem, especially if we require the generality described above. 
In fact, it is only in recent years that procedures such as knockoffs \citep{barber2015controlling, candes2018panning}, which wrap around an arbitrary (freely chosen) importance statistic to achieve finite sample FDR control, have emerged in the literature. 



Still, if we assume that a method for picking $\hat{t}$ such that $\FDR\leq q$ for an arbitrary choice of the importance statistic is available, then the entire procedure---in particular, its power at any target FDR level $q$---depends only on the choice of the statistic $T_i$. 
This motivates studying the selection path $\{\hat{\calS}(t),\ t\in \RR\}$ itself, setting aside the question of calibration. 
More precisely, if we define 
\begin{equation}
\label{eq:fdp-of-t}
    \text{FDP}(t) \equiv \frac{|\hat{\calS}(t) \cap \calH_0|}{|\hat{\calS}(t)|}, \quad \quad
\text{TPP}(t) \equiv \frac{|\hat{\calS}(t) \cap \calS|}{|\calS|}, 
\end{equation}
then we might consider the parametric curve  
$t \mapsto ( \EE[\TPP(t)], \, \EE[\FDP(t)])$, 
as a receiver operating characteristic (ROC) curve of the procedure (although in the standard definition of ROC the denominator in FDP is replaced by $|\calH_0|$).  
Since this parametric curve is determined solely by the choice of the importance statistic $T_i$, it is really the type I error-power tradeoff associated with the statistic $T_i$, and a natural question to ask is whether an {\em optimal} choice of the importance statistic exists in some sense. 

To address this question for fixed $n$ and $p$, i.e., to  compare different choices of the importance statistics for a fixed size of the problem, requires working with expectations, under \eqref{eq:linear_model}, of $\FDP(t)$ and $\TPP(t)$ for a given choice of $T_i$; this is prohibitive even for relatively simple choices of $T_i$ beyond plain least squares (which is anyway applicable only when $p<n$). 
For example, computation becomes infeasible already in the commonly encountered case where $T_i$ is a statistic based on the Lasso estimator \citep{tibshirani1996regression},
\begin{equation}
\label{eq:lasso-estimator}
\hat{\beta}(\lambda)
= \underset{b \in \mathbb{R}^p}{\operatorname{argmin}}
\frac{1}{2}\|Y - \boldsymbol{X} b\|_2^2 + \lambda \|b\|_1,
\end{equation}
since the distribution of $\hat{\beta} = \hat{\beta}(\lambda)$ is intractable for fixed $n$ and $p$.
Nevertheless, under a specific {\em asymptotic} setup, recent developments from approximate message-passing (AMP) theory \citep{bayati2011dynamics, bayati2012lasso} allow to obtain the {\em exact limits}  of $\FDP(t)$ and $\TPP(t)$ for statistics of the form 
\begin{equation}
\label{eq:T-h-lambda}
T_i = \varphi( \hat{\beta}_i(\lambda)),
\end{equation}
where $\varphi$ is a (sufficiently well-behaved) univariate   function. 
These remarkable advances were used and extended in subsequent work 
to analyze the (asymptotic) {\em tradeoff curve},  
\begin{equation*}
    t \ \mapsto  \big(  \lim_{p\to\infty} \TPP(t), \, \lim_{p\to\infty} \FDP(t)  \, \big), 
\end{equation*}
for different Lasso-based variable importance statistics. 
Specifically, in \cite{bogdan2013statistical} asymptotic predictions of FDP and TPP were first obtained for Lasso selection, i.e.~the procedure that selects the active coordinates of $\hat{\beta}$ in  \eqref{eq:lasso-estimator} and whose selection path is formed by varying $\lambda$. 
\cite{su2017false} later strengthened these results 
and derived a universal tradeoff diagram, applicable to any $\Pi$ and $\sigma^2$, which implies that simultaneously achieving high TPP and low FDP is impossible for Lasso selection. 
In \cite{wang2020bridge} the same asymptotic framework was used to study the tradeoff curve for selection based on thresholding the size of an $\ell_\gamma$-regularized estimate, which for $\gamma=1$ specializes to {\em thresholded}-Lasso selection, i.e.~the procedure that selects variable $i$ if $|\hat{\beta}_i|$ exceeds a threshold. 
Unlike Lasso selection, here $\lambda$ is regarded as fixed and the selection path is parametrized by the threshold. 
Among their contributions, \cite{wang2020bridge} characterize the optimal choice for $\lambda$ in terms of the tradeoff curve. 
Overall, their analysis shows that by using thresholded-Lasso with an appropriate choice of $\lambda$, considerable improvements are possible compared to Lasso selection. 

The aforementioned works build on an asymptotic simplification of the empirical distribution of the Lasso estimates, established in \citet{bayati2012lasso}, to analyze the tradeoff curves associated with two widely used procedures, namely Lasso selection and thresholded-Lasso selection. 
In the present work we take the opposite perspective, using the asymptotic simplification of the model to {\em guide} the choice of a selection procedure—or, equivalently, of the importance statistic $T_i$.
The main idea is to identify the simple model to which the Lasso estimator \eqref{eq:lasso-estimator} effectively reduces asymptotically under the AMP setup, as a version of the {\em two-groups} model  \citep{efron2001empirical, efron2008microarrays}. 
Drawing on the extensive literature on multiple testing under the two-groups model \citep{sun2007oracle, xie2011optimal, heller2021optimal}, we propose to use the corresponding {\em local false discovery rate} (lfdr) of the  coefficient estimate to order the variables, so that smaller lfdr indicates stronger evidence against the null. 
In term of the selection path this is equivalent  to rejecting for small values of 
\begin{equation}
\label{T_ol-intro}
T_i^* = \frac{f_0(\hat{\beta}_i)}{f(\hat{\beta}_i)}    
\end{equation}
where $f_0$ denotes the null density and $f$ is the marginal (mixture) density under the corresponding two-groups model. 
However, $T_i^*$ depends on unknown parameters and is therefore not a legal but an {\em oracle} choice of the statistic, which we indicate by the superscript asterisk. 
In fact, unlike the standard two-groups model, where $f$ is unknown but $f_0$ is assumed known, in our case also $f_0$ is known only up to a set of parameters. 
Taking an empirical Bayes approach, we set out to mimic the aforementioned oracle procedure by plugging in estimates of $f$ and $f_0$ into \eqref{T_ol-intro}. 
For the mixture density $f$ we use a kernel density estimator, whereas the null density $f_0$, which based on existing asymptotic theory admits a known parametric form, is estimated parametrically. 

From a theoretical standpoint, obtaining asymptotic predictions for the proposed EB procedure is 
considerably more challenging compared to the analysis for Lasso or thresholded-Lasso, because the fact that $\hat{f}$ and $\hat{f}_0$ depend on all $\hat{\beta}_1(\lambda),...,\hat{\beta}_p(\lambda)$ prevents us from directly applying the fundamental result from  \citet{bayati2012lasso} to the EB {estimate} of $T^*_i$. 
Still, intuitively we expect that if the estimators of $f$ and $f_0$ are consistent---which, in our approach, requires among other conditions that the kernel density estimator be consistent---then the limiting FDP and TPP of the EB procedure will coincide with those of the oracle. 
The main theoretical result in this paper says that under suitable conditions this is indeed the case, our analysis essentially disentangling the limit of $\widehat{\lfdr}(t)$ for a fixed $t$ from the limit of the average of $\lfdr(\hat{\beta}_j)$, the true lfdr at the coefficient estimates, over the $p$ coefficient estimates. 
%
Under suitable assumptions, for any fixed $\lambda$ our EB procedure is shown to asymptotically attain the optimal tradeoff curve among all functions $\varphi$ in \eqref{eq:T-h-lambda}; 
this is a strong notion of asymptotic {\em instance}-optimality, familiar from the EB literature and tracing back to Robbins's original ideas \citep{robbins1951asymptotically}.


\begin{figure}[]
\centering
\includegraphics[width=0.6\linewidth]{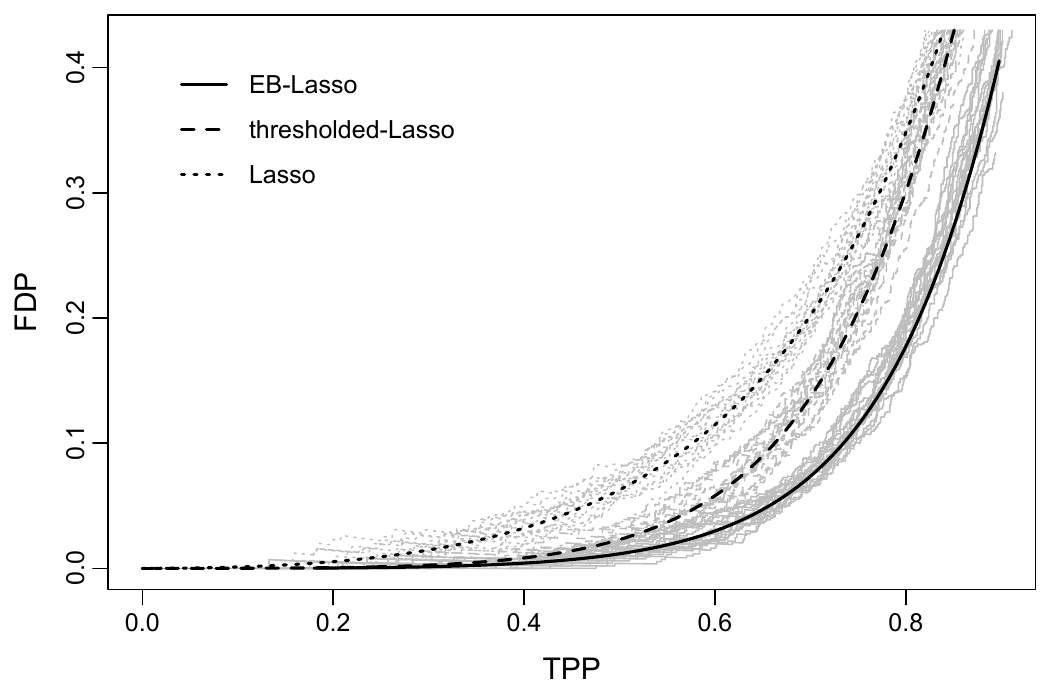}
\caption{Tradeoff curves for the proposed EB method (``EB-Lasso" in the legend) compared to Lasso and thresholded-Lasso selection, for an example with $p=10^4, n=2p$, and  $\beta_i\sim 0.9\delta_0 + 0.1\calN(3.5, 1)$. 
Black curves are theoretical asymptotic predictions, thin grey lines are corresponding simulation results. 
}
\label{fig:intro}
\end{figure}

Similarly to thresholded-Lasso, the choice of $\lambda$ can have a substantial effect on the tradeoff curve of the proposed EB procedure. 
For thresholded-Lasso (and, more generally, for thresholded bridge selection), \cite{wang2020bridge} proved that the value of $\lambda$ which minimizes the asymptotic mean squared error (MSE) is also asymptotically optimal in terms of the FDP-TPP tradeoff. 
In Theorem \ref{thm:optimal-lambda} we obtain an analogous result for the proposed EB procedure, proving that the minimizer in $\lambda$ of the asymptotic MSE again yields the optimal tradeoff curve; although the conclusion agrees with that of \cite{wang2020bridge} for thresholded-bridge selection, our proof requires different techniques. 
This result motivates us to use cross validation for estimating the optimal $\lambda$, which depends on unknown parameters, and from results in \cite{weinstein2023power} we deduce the limit of a $K$-fold
cross-validation estimator under the AMP framework. 
Altogether, our results suggest that, up to the slight bias due to cross validation, using the EB selection procedure with the proposed estimator of $\lambda$ will have the optimal asymptotic tradeoff curve among the larger class of procedures obtained from \eqref{eq:T-h-lambda} by allowing to search not only over $\varphi$ but also over $\lambda$. 



%
To demonstrate the advantages of the proposed method, 
Figure \ref{fig:intro} shows results for an example  with $p=10^4, n=2\cdot 10^4, \sigma = 1$, and where $\beta_i$ are i.i.d.~from a mixture of a point mass at zero with probability $1-\epsilon=0.9$ and a normal distribution $\mathcal{N}(3.5,1)$ with probability $\epsilon = 0.1$. 
The figure shows the corresponding asymptotic FDP–TPP tradeoff curve, as predicted by our theory, of the proposed EB procedure, along with the asymptotic predictions, from existing work, of Lasso and thresholded-Lasso selection; here $\lambda=1$ was used to calculate the Lasso estimate \eqref{eq:lasso-estimator} for EB and thresholded-Lasso. 
For each of the three methods we overlaid empirical (realized) tradeoff curves from 17 independent simulation runs, which show good agreement with the theory. 
This figure demonstrates that the improvement in power can be substantial; further examples, using different signal distributions and sampling ratios, are presented later on. 

The rest of the paper is organized as follows. 
In Section \ref{sec:AMP-framework} we recall the AMP setup and some relevant existing results. 
Section \ref{sec:eb-selection} introduces an oracle selection procedure and the EB procedure designed to asymptotically compete with it. 
Theoretical analysis of both the oracle and the EB procedures is presented in Section \ref{sec:theoretical}, which also includes results on the optimal choice of $\lambda$. 
Section \ref{sec:numerical} reports results from numerical studies where we assess the agreement with the theory. 
In Section \ref{sec:local-fdr} we adopt a viewpoint from recent work emphasizing 
the role of the lfdr in defining {\em type I error} rates of multiple testing problems; 
it is demonstrated how predictions of the lfdr in the AMP framework can be used in a more exploratory manner to select variables while maintaining asymptotic control of the lfdr (instead of FDR). 
Section \ref{sec:discussion} concludes with a discussion of some issues not addressed in this paper and which merit further investigation.

\section{The AMP setup and review of existing results}
\label{sec:AMP-framework}
 We work under the {approximate message passing} (AMP) framework of \cite{bayati2012lasso}, employed also in \cite{su2017false} and \cite{wang2020bridge}. 
Specifically, assuming the linear model \eqref{eq:linear_model}, we consider a high-dimensional asymptotic setting where $p, n \to \infty$ such that $n/p \to \delta > 0$, and the entries of the $n \times p$ matrix $\boldsymbol{X}$ are i.i.d.~realizations from a $\mathcal{N}(0, 1/n)$ distribution. 
Furthermore, individual coefficients $\beta_i$ are assumed to be i.i.d.~copies of a random variable $\Pi$ with bounded second moment and a point mass at zero, 
\begin{equation}
\label{eq:iid-two-groups}
    \beta_i \ \overset{iid}{\sim} \ \Pi = (1-\epsilon)\delta_0 + \epsilon \Pi_1, \ \ \ \ \ \ \ \ \ i=1,...,p, 
\end{equation}
independent of $\boldsymbol{X}$ and $\zeta$, where 
$\mathbb{P}(\Pi_1 =0) = 0$. 
The constant $\epsilon \in (0,1)$ and the distribution of $\Pi_1$ are assumed unknown. 
In the sequel, we use $\Pi$ and $\Pi_1$ to denote either the random variable or its distribution, when there is no risk of confusion. 
We remark that the results to be presented apply also in a frequentist setup as long as the empirical distribution $\Pi^{(n)}$ of the entries of the fixed vector $\beta^{(n)} = (\beta^{(n)}_1,...,\beta^{(n)}_p)$ converges weakly 
to some distribution $\Pi$ of the form \eqref{eq:iid-two-groups} with bounded second moment. 

Under this framework, \citet{bayati2012lasso} introduced the following result, which characterizes the asymptotic behavior for the empirical distribution of the Lasso estimates. 
This result is central to our work, both motivating the new variable selection procedure introduced later, and serving as a key tool in the theoretical analysis.

\begin{namedtheorem}[Theorem 1.5 of \citet{bayati2012lasso}]\label{thm:manual}
    Under the asymptotic setup described above, let $\hat{\beta} = \hat{\beta}(\lambda)$ denote the Lasso estimator \eqref{eq:lasso-estimator}. Let $\psi: \mathbb{R} \times \mathbb{R} \to \mathbb{R}$ be a pseudo-Lipschitz function. Then
\[
\frac{1}{p} \sum_{i=1}^{p} \psi(\hat{\beta}_i, \beta_i) \xrightarrow{\text{a.s.}} \mathbb{E}\left[ \psi\left( \eta_{\alpha \tau}(\Pi + \tau Z), \Pi \right) \right]
\]
as $p\to \infty$, 
where $Z \sim \mathcal{N}(0,1)$ is independent of $\Pi$, and the parameters $\tau > 0$ and $\alpha > \alpha_{\min}$ are the unique solution to the  system of equations given by
\begin{align}
\tau^2 &= \sigma ^2 + \frac{1}{\delta} \mathbb{E}\left[ \left( \eta_{\alpha \tau}(\Pi + \tau Z) - \Pi \right)^2 \right] \label{eq:tau} \\[1ex]
\lambda &= \alpha \tau \left( 1 - \frac{1}{\delta} \mathbb{P}(|\Pi + \tau Z| > \alpha \tau) \right).
\label{eq:lambda}
\end{align}
Here the function $\eta_t(x) = \mathrm{sign}(x) \cdot (|x| - t)_+$ is the soft-thresholding operator, and $\alpha_{\min}$ is the unique non-negative solution to the equation
\begin{equation}\label{eq:alphamin}
(\alpha^2 + 1)\Phi(-\alpha) - \alpha \phi(\alpha) = \frac{\delta}{2},
\end{equation}
where $\Phi$ and $\phi$ denote the c.d.f.~and p.d.f.~of the standard normal distribution, respectively.
\end{namedtheorem}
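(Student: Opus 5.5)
The plan is to follow the strategy of \citet{bayati2012lasso}: realize the Lasso estimator as the limit of an AMP iteration whose state evolution is tractable, and then pass to the limit. Concretely, I would define the AMP recursion
\[
\beta^{t+1} = \eta_{\theta_t}\bigl(\beta^t + \bX^\top r^t\bigr), \qquad r^t = Y - \bX\beta^t + \tfrac{1}{\delta} r^{t-1}\,\bigl\langle \eta'_{\theta_{t-1}}(\beta^{t-1}+\bX^\top r^{t-1})\bigr\rangle ,
\]
with thresholds $\theta_t = \alpha\tau_t$, where $\tau_t$ follows the state evolution $\tau_{t+1}^2 = \sigma^2 + \delta^{-1}\EE[(\eta_{\alpha\tau_t}(\Pi+\tau_t Z)-\Pi)^2]$. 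The first step invokes the general AMP state-evolution theorem for Gaussian matrices (Bayati--Montanari 2011, proved via a conditioning argument on the Gaussian matrix). It gives, for every fixed $t$ and every pseudo-Lipschitz $\psi$,
\[
\tfrac1p\sum_i \psi(\beta^{t}_i,\beta_i) \to \EE\,\psi\bigl(\eta_{\alpha\tau_{t-1}}(\Pi+\tau_{t-1}Z),\Pi\bigr) \quad\text{a.s.}
\]

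The second step analyzes the scalar fixed-point equations. For $\alpha>\alpha_{\min}$, the map $\tau^2\mapsto \sigma^2+\delta^{-1}\EE[(\eta_{\alpha\tau}(\Pi+\tau Z)-\Pi)^2]$ is concave in $\tau^2$ with slope at infinity below $1$; this is exactly where \eqref{eq:alphamin} enters, since the left-hand side of \eqref{eq:alphamin} is half the limiting slope. Hence the map has a unique fixed point $\tau_*(\alpha)$ and $\tau_t\to\tau_*$. Next I would show that $\alpha\mapsto \alpha\tau_*(\alpha)(1-\delta^{-1}\PP(|\Pi+\tau_* Z|>\alpha\tau_*))$ is continuous and strictly increasing from $0$ to $\infty$ on $(\alpha_{\min},\infty)$. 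This yields a unique $\alpha$ matching any given $\lambda$ via \eqref{eq:lambda}. Equation \eqref{eq:lambda} is precisely the condition under which fixed points of AMP satisfy the Lasso KKT conditions, because the Onsager term rescales the subgradient.

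The third step, which I expect to be the main obstacle, is to show that $\tfrac1p\|\beta^t-\hat\beta\|_2^2\to 0$ as $t\to\infty$ after $p\to\infty$. With this in hand, the pseudo-Lipschitz property transfers the limits from $\beta^t$ to $\hat\beta$, and $\tau_t\to\tau_*$ finishes the argument. To prove the closeness, I would first show via state evolution that $\tfrac1p\|\beta^{t+1}-\beta^t\|^2$ and $\tfrac1n\|r^{t+1}-r^t\|^2$ vanish asymptotically. It follows that $\beta^t$ is an approximate KKT point: the vector $\lambda^{-1}\bX^\top(Y-\bX\beta^t)$ is close to a subgradient of $\|\cdot\|_1$ at $\beta^t$, with error $o(\sqrt p)$.

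Turning approximate stationarity into closeness of the minimizers requires restricted strong convexity of the Lasso objective. Specifically, I need that $\bX$ has smallest singular value bounded below on all sparse-ish subsets of size up to $c p$ with $c<\delta$, uniformly. I would prove this using Gaussian concentration and a union bound over supports. I would also need to show that the approximate subgradient has at most a fraction $<\delta$ of coordinates with magnitude near $1$. The latter follows from state evolution, since $\PP(|\Pi+\tau Z|>\alpha\tau)<\delta$ and the distribution of $\beta^t+\bX^\top r^t$ has no atoms near the threshold. Combining these bounds with a standard convexity lemma then controls $\|\hat\beta-\beta^t\|$ by the stationarity defect, giving $\tfrac1p\|\hat\beta-\beta^t\|^2\to0$ as $t\to\infty$.
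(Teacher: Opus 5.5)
The paper contains no proof of this statement. It is quoted from \citet{bayati2012lasso} and used purely as a black box (e.g.\ in the proofs of Lemmas~\ref{lem:lina-1.3} and~\ref{lem:lina-2.1.1}). Your outline reproduces the architecture of the original Bayati--Montanari argument: state evolution for AMP with $\theta_t=\alpha\tau_t$, analysis of the scalar fixed point and the calibration map, and a deterministic lemma turning approximate KKT at $\beta^t$ plus restricted singular-value bounds into $\frac1p\|\hat\beta-\beta^t\|_2^2\to0$. That architecture is sound, but some pieces are misstated.

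\emph{Calibration map.} For $\delta<1$ the map does not start at $0$. As $\alpha\downarrow\alpha_{\min}$ one has $\tau_*(\alpha)\to\infty$ and $\PP(|\Pi+\tau_*Z|>\alpha\tau_*)\to2\Phi(-\alpha_{\min})>\delta$, because the Mills-ratio bound gives $(1+\alpha^2)\Phi(-\alpha)-\alpha\phi(\alpha)<\Phi(-\alpha)$ for $\alpha>0$. Hence $\lambda(\alpha_{\min}^+)=-\infty$. Existence for $\lambda>0$ still follows from continuity. The strict monotonicity you assert without proof is not needed: once AMP-to-Lasso convergence is shown for every calibrated $\alpha$, two solutions would describe the limit law of the same pairs $(\hat\beta_i,\beta_i)$. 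They would then give the same MSE, the same $\tau$ by \eqref{eq:tau}, and the same $\alpha$, since $\PP(|\Pi+\tau Z|\le\alpha\tau)$ is strictly increasing in $\alpha$. Also, the left side of \eqref{eq:alphamin} equals $\delta/2$ times the limiting slope, not half of it.

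\emph{Vanishing increments.} $\frac1p\|\beta^{t+1}-\beta^t\|_2^2\to0$ does not follow from one-time state evolution. You need the joint two-time state evolution and a contraction argument for its covariance recursion. This is where the slope of the state-evolution map at $\tau_*$ being below one is used.

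\emph{Restricted singular values: this step fails as written.} Lipschitz concentration of $\sigma_{\min}(\bX_S)$ around $1-\sqrt{|S|/n}$ gives tails of order $\exp\{-\frac n2(1-\sqrt{|S|/n}-c)^2\}$. Their rate tends to zero as $|S|/n\to1$, while the union-bound entropy $\log\binom{p}{|S|}\approx pH(|S|/p)$ does not. Yet the sets you must cover contain $S(c_2)=\{i:|v_i|\ge1-c_2\}$, of size about $p\,\PP(|\Pi+\tau_*Z|>\alpha\tau_*)=n(1-\lambda/(\alpha\tau_*))$, which is close to $n$ when $\lambda$ is small. What works instead is a small-ball bound for Gaussian matrices: $\PP\{\sigma_{\min}(\bX_S)\le\varepsilon\}\le(C\delta\varepsilon/\gamma)^{n-|S|+1}$ whenever $n-|S|\ge\gamma p$. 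Its exponential rate grows like $\gamma\log(1/\varepsilon)$, so it beats the $2^p$ subsets once $\varepsilon$ is small, and Borel--Cantelli gives the a.s.\ version.

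\emph{The convexity lemma.} The ``standard convexity lemma'' is not restricted strong convexity; there is none in directions off $S(c_2)$. Take $r=\hat\beta-\beta^t$ and $v\in\partial\|\beta^t\|_1$ with $\|\bX^\top(Y-\bX\beta^t)-\lambda v\|_2\le\kappa\sqrt p$. Optimality of $\hat\beta$ gives $\frac12\|\bX r\|_2^2+\lambda c_2\|r_{S(c_2)^c}\|_1\le\kappa\sqrt p\,\|r\|_2$. Put the $c_3p$ largest entries of $r_{S(c_2)^c}$ into $S'$, and bound the rest in $\ell_2$ by $\|r_{S(c_2)^c}\|_1/\sqrt{c_3p}$. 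Applying the singular-value bound on $S(c_2)\cup S'$ then bootstraps to $\|r\|_2\lesssim\kappa\sqrt p$.
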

The theorem above basically says that, as far as calculating expectations of sufficiently well-behaved functions under the (random) empirical distribution of the pairs $(\hat{\beta}_i, \beta_i)$, we can asymptotically regard 
\begin{equation}
\label{eq:asymptotic-iid}
\big( \hat{\beta}_i, \beta_i) \ \sim \ (\eta_{\alpha\tau}( \Pi +\tau Z), \; \Pi \big), \ \ \ \ \ \ \ \text{independent for}\ i=1,...,p. 
\end{equation}
In \citet{bogdan2013statistical} this fundamental result of \citet{bayati2012lasso} was leveraged, with some necessary technical extensions, to obtain exact limits of FDP and TPP for 
the procedure whose path is given by 
\begin{equation}
\label{eq:lasso-selection}
\hat{\calS}^L(\lambda) = \{i: \hat{\beta}_i(\lambda) \neq 0 \},\ \ \ \ \ \lambda>0. 
\end{equation}
Formally, putting $\FDP(\lambda):= |\hat{\calS}^L(\lambda) \cap \calH_0|/|\hat{\calS}^L(\lambda)|$ and $\TPP(\lambda):= |\hat{\calS}^L(\lambda) \cap \calS|/|\calS|$, they proved that under the AMP setup described above, 
\begin{equation}\label{eq:: FDP TPP lasso}
\begin{aligned}
&\text{FDP}(\lambda) \xrightarrow{P} \frac{2(1 - \epsilon)\Phi(-\alpha)}{2(1 - \epsilon)\Phi(-\alpha) + \epsilon \mathbb{P}(|\Pi_1 + \tau Z| > \alpha \tau)} \ =: \ \fdp^\lasso(\lambda)\\[1.5ex]
&\text{TPP}(\lambda) \xrightarrow{P} \mathbb{P}(|\Pi_1 + \tau Z| > \alpha \tau) \ =: \ \tpp^\lasso(\lambda), 
\end{aligned}
\end{equation}
as $p\to \infty$, where $(\alpha, \tau)$ solve equations \eqref{eq:tau} and \eqref{eq:lambda}. 
\cite{su2017false} later extended this to show that the convergence in \eqref{eq:: FDP TPP lasso} is uniform in $\lambda$. 
It is noted that the procedure \eqref{eq:lasso-selection} is, strictly speaking, not of the form \eqref{eq:threshold-selection}; however, as pointed out in \cite{weinstein2017power}, it is closely related to the procedure of the form \eqref{eq:threshold-selection} that uses the time of entry into the Lasso path  
$T^{\text{LM}}_i = \sup\{\lambda>0:\hat{\beta}_i(\lambda) \neq 0\}$ 
as importance statistic. 
Indeed, the two procedures have different selection paths only if variables drop out of the Lasso path, which we consider to be a fairly rare event. 
Thus, for `practical' purposes, we regard  \eqref{eq:lasso-selection} as equivalent to the path 
\begin{equation}
\label{eq:lambda-sup-path}
\hat{\calS}^{\text{LM}}(t) = \{i\leq p: T^{\text{LM}}_i \geq t\},\ \ \ \ \ t>0, 
\end{equation}
where the superscript stands for ``Lasso-max", 
and we refer to \eqref{eq:lambda-sup-path} as the path corresponding to {\em Lasso} selection. 


In \citet{su2017false} the asymptotic predictions \eqref{eq:: FDP TPP lasso} were used to demonstrate a deficiency of Lasso selection, showing that under the AMP setting, regardless of $\Pi_1$ and $\epsilon$, high TPP necessarily comes at the price of inflated FDP. 
Followup work under the AMP framework \citep{wang2020bridge, weinstein2023power} has shown that this can be mitigated considerably by selecting only the Lasso estimates whose magnitude $|\hat{\beta}_i(\lambda)|$ is sufficiently large, that is, by employing {\em thresholded-Lasso} selection. 
For fixed $\lambda>0$, the corresponding selection path is
\begin{equation}
\hat{\calS}^{\TL}_\lambda(t) = \{i: |\hat{\beta}_i(\lambda)| > t \},\ \ \ \ \ t>0. 
\label{eq:thresholded-lasso}
\end{equation}
Defining $\FDP(t;\lambda):= |\hat{\calS}^{\TL}_\lambda(t) \cap \calH_0|/|\hat{\calS}^{\TL}_\lambda(t)|$, $\TPP(t;\lambda):= |\hat{\calS}^{\TL}_\lambda(t) \cap \calS|/|\calS|$,  \citet{wang2020bridge} prove that 
\begin{equation}
\label{eq:FDP TPP thresholded-lasso}
\begin{aligned}
&\text{FDP}(t; \lambda) \xrightarrow{P}
\frac{2(1 - \epsilon)\Phi\left(-\alpha - \frac{t}{\tau}\right)}
     {2(1 - \epsilon)\Phi\left(-\alpha - \frac{t}{\tau}\right)
     + \epsilon \, \mathbb{P}\left( \left| \Pi_1 + \tau Z \right| \geq \alpha \tau + t \right)} \ =: \ \fdp^{\TL}(t; \lambda)\\[1ex]
&\text{TPP}(t; \lambda) \xrightarrow{P}
\mathbb{P}\left( \left| \Pi_1 + \tau Z \right| \geq \alpha \tau + t \right) \ =: \ \tpp^{\TL}(t; \lambda). 
\end{aligned}
\end{equation}
The asymptotic predictions \eqref{eq:FDP TPP thresholded-lasso} were used in \citet{wang2020bridge} and \cite{weinstein2023power}  to show that if $\lambda$ is chosen adequately, this strategy---which allows looking further down the Lasso path by choosing smaller values of $\lambda$---can achieve significantly better separation between signal and noise compared to Lasso selection. 
Formally, \citet[][Theorem 3.2]{wang2020bridge} prove the following. 
Let $\lambda^*$ be the minimizer of the asymptotic mean squared error $\EE (\eta_{\alpha\tau}(\Pi + \tau Z)-\Pi)^2$. 
Then, first, $\lambda^*$ is optimal for thresholded-Lasso in the sense that it minimizes the asymptotic FDP for any fixed asymptotic TPP level. Second, they show that for any (asymptotic) TPP level feasible for Lasso, it holds that
\begin{equation}
\tpp^\TL(t; \lambda^*) = \tpp^\lasso(\lambda) \ \ \Rightarrow \ \ 
\fdp^\TL(t; \lambda^*) \leq \fdp^\lasso(\lambda).     
\end{equation}
In other words, if $\lambda$ for Lasso and $t = t(\lambda^*)$ for thresholded-Lasso are chosen such that both procedures have the same TPP, then thresholded-Lasso has smaller FDP than Lasso.

%


\section{An empirical Bayes variable selection procedure}
\label{sec:eb-selection}

Returning to the interpretation presented above for Theorem 1.5 of \citet{bayati2012lasso}, notice that \eqref{eq:asymptotic-iid} entails a {\em two-groups} model, 
\begin{equation}
\label{eq:two-groups}
\begin{aligned}
    \mathbb{P}(\beta_i = 0)  &= 1-\epsilon, &\quad \hat{\beta_i}|\beta_i = 0\  &\sim \ \eta_{\alpha\tau}(\tau Z) \\[1ex]
    \mathbb{P}(\beta_i \neq 0)  &= \epsilon, &\quad \hat{\beta_i}|\beta_i \neq 0\  &\sim \ \eta_{\alpha\tau}( \Pi_1 +\tau Z). 
\end{aligned}
\end{equation}
This observation motivates us to appeal to existing theory in the literature on multiple testing under the two-groups model, and propose an EB approach that uses an estimate of the {local false discovery rate} as the test statistic. 
Thus, compared to {thresholded-Lasso}, which  selects  variables by thresholding the magnitude of the Lasso coefficient, we will propose to select them using an EB procedure that thresholds the estimated local false discovery rate of $\hat{\beta_i}$ under \eqref{eq:two-groups}. 
In this section, we develop the EB procedure under the {\em working assumption} that $(\hat{\beta}_i, \beta_i)$ are i.i.d.~pairs from the two-groups model \eqref{eq:two-groups}; to clarify, postulating the i.i.d.~model in \eqref{eq:two-groups} is completely acceptable for the purpose of developing the methodology itself, whereas the analysis in the next section will of course need to rely  only on the much weaker property in the result of \citet{bayati2012lasso}. 
To expand on our motivation, in the genuinely i.i.d.~setup, the EB selection procedure is expected to yield a selection procedure with asymptotic {\em optimality} properties in terms of power. 
More precisely, the oracle selection rule that thresholds the {true} lfdr can be shown to asymptotically maximize the expected TPP  subject to FDR control. In fact, 
%
this follows from the {\em non}-asymptotic results of \citet{sun2007oracle} for mFDR control, or of \citet{heller2021optimal} for FDR control 
 \citep[see also][for analogous nonasymptotic results characterizing the oracle rule when $\beta_i$ are fixed parameters]{weinstein2021permutation}, because mFDR and FDR are asymptotically equivalent when the threshold is fixed \citep{benjamini2008microarrays}. 
In turn, the EB rule is designed to mimic the oracle procedure without using any knowledge of the parameters of the problem. 
Roughly speaking, then, under conditions ensuring consistency of the EB estimator  of the lfdr, it is expected to asymptotically attain the performance of the oracle. 

%

\smallskip
Proceeding under our working assumption, denote
$$
f_0 = \text{density of } \eta_{\alpha\tau}(\tau Z), \quad \quad \quad f_1 = \text{density of } \eta_{\alpha\tau}( \Pi_1 +\tau Z),
$$
where `density' should be understood here in the extended sense, since both $f_0$ and $f_1$ have mass at zero. 
As usual, we will refer to $f_0$ as the {\em null} distribution and to $f_1$ as the {\em non-null} distribution. 
Then the marginal density of $\hat{\beta_i}$ is the mixture
\begin{equation}\label{eq:f}
    f = (1 - \epsilon) f_0 + \epsilon f_1. 
\end{equation}
The {\em local false discovery rate} \citep[lfdr,  henceforth;][]{efron2004large, efron2008microarrays} is defined as the posterior probability, under the postulated model \eqref{eq:two-groups}, that ${\beta _i}$ 
is null given $\hat{\beta}_i=x$, 
\begin{equation}
\label{lfdr}
\text{lfdr}(x) := \PP(\Pi = 0 | \, \eta_{\alpha\tau}( \Pi +\tau Z) =x ) = \frac{(1 - \epsilon) f_0(x)}{f(x)}. 
\end{equation}
Thus, small values of $\text{lfdr}(\hat{\beta}_i)$ indicate more evidence against the null, $\beta_i = 0$. 
Note that, as in the standard two-groups model, $f_1$ and $\epsilon$ are unknown, but unlike in the usual two-groups  model, here also $f_0$ is unknown because $\alpha$ and $\tau$ depend on the unknown $\epsilon$ and $\Pi_1$ (and $\sigma^2$). 
Now consider the {\em oracle} procedure that selects variables with sufficiently small lfdr \eqref{lfdr}. 
This has the same selection path as the procedure that rejects for small values of the density ratio $T_i^*$ defined in \eqref{T_ol-intro}, 
which will be easier for us to work with 
as it does not require estimating $\epsilon$. 
%
Throughout we impose the condition that variables with $\hat{\beta}_i=0$ are not selected; this 
is innocuous because, assuming $\epsilon$ is small, the FDP of a rule that selects the zero coefficient estimates is usually too large to be of interest anyway. 
Thus, the selection path of the oracle procedure is given by 
\begin{equation*}
\hat{\calS}^{*}(t) 
= \bigg\{ i : \hat{\beta}_i \neq 0, \ \frac{f_0(\hat{\beta}_i)}{f(\hat{\beta}_i)} \leq t \bigg\}
\end{equation*}
and the corresponding tradeoff curve is given by 
\[
\text{FDP}^{*}(t) = \frac{| \hat{\calS}^{*}(t) \cap \calH_0 |}{| \hat{\calS}^{*}(t) |},\ \ \ \ \ \ \ 
\text{TPP}^{*}(t) = \frac{| \hat{\calS}^{*}(t) \cap {\calS}|}{| {\calS} |}. 
\]
To mimic the oracle rule with a legitimate selection rule, i.e.~a procedure that does not depend on unknown parameters of the problem, we propose to estimate $f$ (non-parametrically) and $f_0$ (parametrically) from the vector of observations $\hat{\beta}$, and substitute them in \eqref{T_ol-intro} to obtain an estimate 
$$
T^{EB}_i= \frac{\hat{f}_0(\hat{\beta}_i)}{\hat{f} (\hat{\beta}_i)}
$$
of $T^*_i$. 
The selection path of the resulting {\em empirical Bayes} (EB) procedure is then  
\begin{equation*}
\hat{\calS}^{EB}(t) = \left\{ i : \hat{\beta}_i \neq 0, \ T^{EB}_i \leq t \right\}
= \bigg\{ i : \hat{\beta}_i \neq 0, \ \frac{\hat{f}_0(\hat{\beta}_i)}{\hat{f}(\hat{\beta}_i)} \leq t \bigg\}, 
\end{equation*} 
and its tradeoff curve is given by 
\[
\text{FDP}^{EB}(t) = \frac{| \hat{\calS}^{EB}(t) \cap \calH_0 |}{| \hat{\calS}^{EB}(t) |},\ \ \ \ \ \ \ 
\text{TPP}^{EB}(t) = \frac{| \hat{\calS}^{EB}(t) \cap {\calS}|}{| {\calS} |}. 
\]


Before describing the specific implementation we propose for the EB procedure, we offer yet another representation for the selection paths of $\hat{\calS}^*$ and $\hat{\calS}^{EB}$, which will be useful for the analysis in the next section. 
Under the two-groups model \eqref{eq:two-groups}, both $f_0$ and $ f_1$ have point mass at zero, so we can write   
\begin{equation}
    f_0(\hat{\beta}_i) = (1 - w_0) \delta_0 (\hat{\beta}_i) + w_0 g_0(\hat{\beta}_i),  \label{f_0}
\end{equation}
where 
$$
1-w_0 =  \PP(|Z|  \leq \alpha) = 
2\Phi(\alpha) -1, \quad\quad
g_0 = \text{density of } \tau\eta_{\alpha}(Z)  \big\lvert |Z| >\alpha. 
$$ 
Similarly, 
\begin{equation}
\label{f_1}
    f_1(\hat{\beta}_i) = (1 - w_1) \delta_0 (\hat{\beta}_i) + w_1 g_1(\hat{\beta}_i),  
\end{equation}
where 
$$
\begin{aligned}
&1-w_1 = 
\mathbb{P}(|\Pi_1 + \tau Z| \leq \alpha\tau), \quad\quad\quad 
g_1 = \text{density of $\eta_{\alpha\tau}(\Pi_1 + \tau Z) \big\lvert |\Pi_1 + \tau Z|>\alpha\tau$}. 
\end{aligned}
$$
With these definitions, the marginal density of $\hat{\beta_i}$ in \eqref{eq:f} can be represented 
\begin{equation} \label{eq:f-alt}
    f(\hat{\beta}_i) = (1 - w) \delta_0 (\hat{\beta}_i) + w g(\hat{\beta}_i),
\end{equation}
where 
$$
1-w = (1-w_0) (1-\epsilon) + (1-w_1) \epsilon
$$
and 
$$
g = \frac{2(1-\epsilon)(1-\Phi(\alpha))}{w} g_0 + \bigg(1-\frac{2(1-\epsilon)(1-\Phi(\alpha))}{w} \bigg) g_1. 
$$
If we further define $q(\hat{\beta}_i) :=  w g(\hat{\beta}_i)$ and $q_0(\hat{\beta}_i) :=  w_0 g_0(\hat{\beta}_i)$, then \eqref{f_0} and \eqref{eq:f-alt} become 
\begin{equation}
    f_0(\hat{\beta}_i) = (1 - w_0) \delta_0 (\hat{\beta}_i) +  q_0(\hat{\beta}_i), \quad\quad\quad f(\hat{\beta}_i) = (1 - w) \delta_0 (\hat{\beta}_i) +  q(\hat{\beta}_i), 
\end{equation}
respectively. 
We then propose to estimate $f$ as
\begin{equation}
\label{f_hat}
\hat{f}(\hat{\beta}_i) = (1 - \hat{w}) \delta_0 (\hat{\beta}_i) + \hat{q}(\hat{\beta}_i), 
\end{equation}
with 
\begin{equation}
\label{eq:w-hat-q-hat}
\hat{w} = \frac{1}{p}\sum_{i=1}^{p}\mathbf{1}\{\hat{\beta_i} \neq 0\}, \quad\quad\quad\quad \hat{q}(x) = \frac{1}{ph} \sum_{i=1}^{p}\mathbf{1}\{\hat{\beta_i} \neq 0\} \ \mathcal{K}\bigg(\frac{x-\hat{\beta_i}}{h}\bigg), 
\end{equation}
throughout, we used the standard Gaussian density $\phi$ as the kernel, i.e, $\mathcal{K} = \phi$. 
 
For estimating $f_0$ 
we need to estimate the parameters $\alpha \tau$ and $\tau$, since $q_0$ involves the density $g_0$ of $\eta_{\alpha \tau}(\tau Z)=\tau\eta_{\alpha}(Z)$ conditional on $ |Z|>\alpha$. 
In \citet{mousavi2018consistent} it is shown that, for any fixed $\lambda > 0$,  
\begin{equation}
\label{tau-alphatau-hat}
\hat{\tau}^2 = \frac{\| Y - \bX \hat{\beta} \|_2^2}{n (1 - \| \hat{\beta} \|_0/n )^2}, 
\quad \quad \quad
\widehat{\alpha \tau} = \frac{\lambda}{1 - \| \hat{\beta} \|_0/n},\ \ \ \ \ 
\end{equation}
are consistent estimators of $\tau^2 $ and $\alpha\tau$, respectively. 
This naturally leads to the plug-in estimate 
$$
\hat{g}_0(x) = I(x>0)\displaystyle \frac{1}{2 \hat{\tau} (1-\Phi(\hat{\alpha}))} \phi\left( \frac{x + \widehat{\alpha \tau}}{\hat{\tau}} \right) + I(x<0) \ \displaystyle \frac{1}{2 \hat{\tau} (1-\Phi(\hat{\alpha}))} \phi\left(  \frac{x - \widehat{\alpha \tau}}{\hat{\tau}} \right)
$$
for $\hat{\tau} = \sqrt{\hat{\tau}^2}$ and $\hat{\alpha} = \widehat{\alpha \tau}/\hat{\tau}$. 
Further defining $\hat{w}_0 = 2(1-\Phi(\hat{\alpha}))$, this yields 
\begin{equation}
\hat{f}_0(\hat{\beta}_i) = (1 - \hat{w}_0) \delta_0 (\hat{\beta}_i) + \hat{q}_0(\hat{\beta}_i)
\label{f_0_hat} 
\end{equation}
as an estimator of $f_0$. 
Finally, because for $\hat{\beta}_i\neq 0$ we have $f_0(\hat{\beta}_i)/f(\hat{\beta}_i) = q_0(\hat{\beta}_i)/q(\hat{\beta}_i)$ and $\hat{f}_0(\hat{\beta}_i)/\hat{f}(\hat{\beta}_i) = \hat{q}_0(\hat{\beta}_i)/  \hat{q}(\hat{\beta}_i)$, 
the selection path of the oracle can be written equivalently as
\begin{equation}
\label{eq:s_hat_ol}
\hat{\calS}^{*}(t) = 
\bigg\{ i : \hat{\beta}_i \neq 0, \frac{ {q}_0(\hat{\beta}_i)}{{q}(\hat{\beta}_i)} \leq t \bigg\}, 
\end{equation}
and the selection path of the EB procedure can be written as
\begin{equation}
\label{eq:s_hat_eb}
\hat{\calS}^{EB}(t) = \bigg\{ i : \hat{\beta}_i \neq 0,\frac{\hat{q}_0(\hat{\beta}_i)}{\hat{q}(\hat{\beta}_i)} \leq t \bigg\}. 
\end{equation}


\section{Theoretical results} \label{sec:theoretical}

In the previous section we introduced an oracle procedure that ranks the variables by their local false discovery rate, as well as a legal, empirical Bayes procedure designed to mimic the oracle. 
In this section we analyze these two selection procedures under the AMP framework. 
Ultimately, the purpose is to obtain asymptotic predictions of the FDP and the TPP for both procedures, so their tradeoff curve can be compared to each other and to competitors such as Lasso selection and thresholded-Lasso selection. 
The analysis is technically challenging because instead of assuming that $(\hat{\beta}_i, \beta_i)$ are i.i.d.~draws from \eqref{eq:two-groups}, as in the two-groups setup studied in most of the existing work, here we have to settle for the considerably weaker property in Theorem 1.5 of Bayati and Montanari. 
This becomes an issue both in the analysis of the limit of the lfdr estimate itself, 
and mainly when calculating asymptotic predictions for the resulting EB procedure, because the fact that the lfdr is estimated from all $\hat{\beta}_i$ prevents us from directly applying the theorem of Bayati and Montanari to calculate the limits of FDP and TPP. 


Treating $\lambda$ as fixed, we will make the following mild assumption on the model, which says that the set of all $x$ for which
$f_0(x)/f(x) \leq t$
can be expressed as a union of disjoint intervals, excluding some  neighborhood of zero. 
This condition enables us to leverage the results of \cite{bogdan2013supplement} to prove  convergence of FDP and TPP for the oracle procedure to their limits; 
more specifically, it allows us to invoke the uniform continuity of the densities of the nonzero Lasso estimates on $(-\infty, -\Delta]$ and on $[\Delta, \infty), \Delta >0$. 



\begin{assumption}
\label{assmptn:1}
There exists $t_0 >0$ such that for all $t \leq t_0$, we have
$$
\frac{f_0(x)}{f(x)} \leq t \iff  x \in \bigcup_{m=1}^{M} (a_m(t), b_m(t)), 
$$
where $(a_m(t), b_m(t)),\ m=1,...,M$, are disjoint intervals (allowing $a_m(t) = -\infty$ or $b_m(t)=\infty$). 
Moreover, each interval is separated from zero, that is, for every $m$ 
\[
a_m(t), b_m(t) > 0 \text{ and } a_m(t) > \Delta \quad \quad \text{or} \quad \quad a_m(t), b_m(t) < 0 \text{ and } b_m(t) < -\Delta
\] 
holds 
for some small fixed $\Delta > 0$.  
\end{assumption}

Assumption \ref{assmptn:1} guarantees that
\[
\big\{x:\frac{f_0(x)}{f(x)} \leq t\big\} = \big\{x: x \neq 0, \frac{f_0(x)}{f(x)} \leq t\big\} = \big\{x:\frac{q_0(x)}{q(x)} \leq t\big\}
\]
%
%
For illustration, if $\Pi = 0.9  \delta_0 + 0.02  \,  \mathcal{N}(-3.6, 1) +\ 0.08  \,  \mathcal{N}(4, 1)$, then, e.g., ${f_0(x)}/{f(x)} \leq 0.6$  whenever $x \geq 1.413\ $ or $x \leq -1.941$. 


\smallskip
We begin with stating theoretical results for the oracle procedure. 
Fix $t\leq t_0$, where $t_0$ is the constant from Assumption \ref{assmptn:1}, and consider selecting the subset \eqref{eq:s_hat_ol}. 
Let 
$$
\begin{aligned}
V^*(t;\lambda) &= 
\sum_{i=1}^p \mathbf{1}\{\hat{\beta}_i \neq 0, \frac{ {q}_0(\hat{\beta}_i)}{{q}(\hat{\beta}_i)} \leq t, \beta_i = 0\}=
\sum_{i=1}^p \mathbf{1}\{\frac{ {q}_0(\hat{\beta}_i)}{{q}(\hat{\beta}_i)} \leq t, \beta_i = 0\}
, \\
R^*(t;\lambda) &= 
\sum_{i=1}^p \mathbf{1}\{\hat{\beta}_i \neq 0, \frac{ {q}_0(\hat{\beta}_i)}{{q}(\hat{\beta}_i)} \leq t \}=
\sum_{i=1}^p \mathbf{1}\{\frac{ {q}_0(\hat{\beta}_i)}{{q}(\hat{\beta}_i)} \leq t \}
, 
\end{aligned}
$$
be the number of false rejections and the total number of rejections, respectively, where in both lines of the display the last equality is by Assumption \ref{assmptn:1}. 
Define 
$$
\begin{aligned}
t_{V_m}(x, y) &:= \mathbf{1}\{x \in (a_m(t), b_m(t))\} \cdot \mathbf{1}\{y = 0\}, \quad\quad 
t_{R_m}(x, y) &:= \mathbf{1}\{x \in (a_m(t), b_m(t))\}, 
\end{aligned}
$$
for the functions $a_m, b_m$ from Assumption \ref{assmptn:1}, 
so that 
\[
V^*(t;\lambda) = \sum_{m=1}^M \sum_{i=1}^p t_{V_m}(\hat{\beta}_i, \beta_i), \quad \quad  R^*(t;\lambda) = \sum_{m=1}^M \sum_{i=1}^p t_{R_m}(\hat{\beta}_i, \beta_i).
\]
The following lemma is the crucial step in obtaining asymptotic predictions for the oracle procedure. 
Its proof uses Lemmas \ref{lem:lina-1.2}, \ref{lem:lina-1.3} and \ref{lem:lina-1.4}, which are included in the Appendix. 
The main challenge in the proof arises from the fact that the Lasso estimates are not actually i.i.d.; consequently, the law of large numbers  cannot be applied directly to obtain the desired limits, and instead we must resort to Theorem \ref{sec:AMP-framework} of \cite{bayati2012lasso}. 
In applying that theorem, we sidestep the discontinuity of the indicator functions involved in the expressions for FDP and TPP following a similar approach to  \cite{su2017false}. 

\begin{lemma}
\label{lem:lina-1.1}
Under Assumption \ref{assmptn:1} and the asymptotic setup of Section \ref{sec:AMP-framework}, we have 
\[
\operatorname{FDP}^*(t;\lambda) := \frac{V^*(t;\lambda)}{R^*(t;\lambda) \vee 1}
\ \longrightarrow\ 
\frac{\mathbb{E} \sum_{m=1}^M t_{V_m}(\eta_{\alpha\tau}(\Pi + \tau Z), \Pi)}
     {\mathbb{E}\sum_{m=1}^M t_{R_m}(\eta_{\alpha\tau}(\Pi + \tau Z), \Pi)}, 
\]
in probability as $p\to \infty$, 
where $\tau > 0$, $\alpha > \alpha_{\min}$ are the unique solution to equations \eqref{eq:tau}-\eqref{eq:lambda}.    
\end{lemma}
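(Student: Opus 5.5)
The plan is to show separately that $V^*(t;\lambda)/p$ and $R^*(t;\lambda)/p$ converge in probability to $\mathbb{E}\sum_m t_{V_m}(\eta_{\alpha\tau}(\Pi+\tau Z),\Pi)$ and $\mathbb{E}\sum_m t_{R_m}(\eta_{\alpha\tau}(\Pi+\tau Z),\Pi)$. The conclusion then follows by the continuous mapping theorem, provided the denominator limit is strictly positive. Positivity holds for any nonempty interval $(a_m(t),b_m(t))$: it lies away from zero, where $\eta_{\alpha\tau}(\Pi+\tau Z)$ has a density positive everywhere on $\RR\setminus\{0\}$ (it is a Gaussian convolution shifted by $\pm\alpha\tau$). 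Since $M$ is finite, it suffices to treat a single interval $(a,b)=(a_m(t),b_m(t))$ and, by symmetry, to assume $\Delta<a<b\le\infty$.

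The obstacle is that $t_{R_m}$ and $t_{V_m}$ are indicators, hence not pseudo-Lipschitz, so Theorem 1.5 of \citet{bayati2012lasso} does not apply directly. I would sandwich them between Lipschitz functions, following \citet{su2017false}.

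\textbf{The function $t_{R_m}$.} For small $\gamma>0$, let $\psi^{\pm}_\gamma(x)$ be piecewise-linear functions satisfying
\[
\psi^-_\gamma\le \mathbf{1}\{x\in(a,b)\}\le \psi^+_\gamma .
\]
Take $\psi^+_\gamma$ equal to $1$ on $[a,b]$, vanishing outside $(a-\gamma,b+\gamma)$, and linear in between. Take $\psi^-_\gamma$ equal to $1$ on $[a+\gamma,b-\gamma]$, vanishing outside $(a,b)$, and linear in between. For $b=\infty$, simply use one-sided ramps. These functions are Lipschitz in $x$, so the theorem gives
\[
\frac1p\sum_i\psi^\pm_\gamma(\hat\beta_i)\to\mathbb{E}\,\psi^\pm_\gamma(\eta_{\alpha\tau}(\Pi+\tau Z))
\]
almost surely. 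The gap between the two expectations is at most $\PP(\eta_{\alpha\tau}(\Pi+\tau Z)\in[a-\gamma,a+\gamma]\cup[b-\gamma,b+\gamma])$. Since $a-\gamma>0$ for $\gamma<\Delta$, and the nonzero part of the law has a bounded continuous density there, this gap is $O(\gamma)$. I expect this is where the appendix lemmas enter: uniform continuity and boundedness of the densities on $[\Delta,\infty)$, as in \citet{bogdan2013supplement}. Letting $\gamma\downarrow0$ yields the limit of $R^*/p$.

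\textbf{The function $t_{V_m}$.} This function also contains $\mathbf{1}\{y=0\}$, which is discontinuous in $y$. Since $\Pi$ has an atom at $0$ and $\Pi_1$ has no mass at $0$, I would replace $\mathbf{1}\{y=0\}$ by Lipschitz bounds: $\chi^+_\kappa(y)=(1-|y|/\kappa)_+$ from above, and $\mathbf{1}\{y=0\}$ itself from below. The lower bound needs separate handling. The cleanest route is to note that $\beta_i$ are i.i.d. from $\Pi$, which gives
\[
\frac1p\sum_i \mathbf{1}\{\hat\beta_i\in(a,b)\}\mathbf{1}\{\beta_i\ne0\}\le\frac1p\sum_i\psi^+_\gamma(\hat\beta_i)\bigl(1-\chi^+_\kappa(\beta_i)\bigr)+\frac1p\sum_i \mathbf{1}\{0<|\beta_i|<\kappa\}.
\]
The last term tends to $\epsilon\,\PP(0<|\Pi_1|<\kappa)$ by the strong law of large numbers, and this vanishes as $\kappa\to0$. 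The products $\psi^\pm_\gamma(x)\chi^+_\kappa(y)$ are bounded and Lipschitz, hence pseudo-Lipschitz, so the theorem applies to them. Writing $V^*=R^*-(\text{non-null count})$ gives matching upper and lower bounds. Letting $\gamma,\kappa\to0$ and using dominated convergence for the expectations completes the argument.

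The main obstacle I anticipate is controlling the boundary mass uniformly, i.e. showing that $\PP(\eta_{\alpha\tau}(\Pi+\tau Z)\in[c-\gamma,c+\gamma])\to0$ at the interval endpoints $c$. This is exactly why Assumption \ref{assmptn:1} keeps all intervals away from the atom at zero.
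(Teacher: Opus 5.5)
Your argument is correct, and it takes a genuinely different route from the paper's.

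Both arguments reduce the lemma to the limits of $R^*/p$ and $V^*/p$ (the paper's Lemmas~\ref{lem:lina-1.3} and~\ref{lem:lina-1.4}). The paper, however, uses only a one-sided outer ramp $t_{R_{m,h}}\ge t_{R_m}$. It must therefore show directly that the \emph{empirical} fraction of $\hat\beta_i$ in the strips $(b_m,b_m+h)$ and $(a_m-h,a_m)$ is negligible as $h\to0$. It does this through:
the KKT representation of $\hat\beta_A$;
a union bound over candidate active sets and sign patterns;
the Gaussian anti-concentration bound of Lemma~\ref{lem:lina-1.2};
and the lower bound on $|A|$ from \citet{bogdan2013supplement}.

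Your two-sided sandwich $\psi^-_\gamma\le\mathbf 1\{x\in(a,b)\}\le\psi^+_\gamma$ shifts all boundary control to the limit law. There it is immediate, because $\eta_{\alpha\tau}(\Pi+\tau Z)$ has density at most $1/(\tau\sqrt{2\pi})$ on $\RR\setminus\{0\}$, and Assumption~\ref{assmptn:1} keeps the endpoints away from the atom at zero. So none of the anti-concentration machinery is needed, and you get almost-sure rather than in-probability convergence. The paper's heavier argument is indispensable only at the atom itself (as in Lemma~\ref{lem:lina-2.1.1} and \citet{su2017false}). There, any continuous majorant of $\mathbf 1\{x\ne0\}$ equals $1$ at $0$ and so has the wrong limit.

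Your handling of $t_{V_m}$ is also more complete than the paper's. Lemma~\ref{lem:lina-1.4} is dismissed as ``similar'', yet $t_{R_{m,h}}(x)\mathbf 1\{y=0\}$ is still discontinuous in $y$ and hence not pseudo-Lipschitz. Your argument closes that step with three pieces: the majorant $\psi^+_\gamma(x)\chi^+_\kappa(y)$, the complementary bound on the non-null count, and the strong law applied to $\#\{i:0<|\beta_i|<\kappa\}$ together with $\PP(\Pi_1=0)=0$. This last ingredient relies on the i.i.d.\ model for $\beta_i$. In the fixed-$\beta$ version mentioned in Section~\ref{sec:AMP-framework}, you would additionally need the empirical fraction of small nonzero $\beta_i$ to vanish.
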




Theorem \ref{thm:predictions-ol} below gives the limits of FDP and TPP for the oracle procedure \eqref{eq:s_hat_ol}. 
With Lemma \eqref{lem:lina-1.1} in place, it follows from the 
results of \citet{bayati2012lasso} and is analogous to  the results that \cite{bogdan2013supplement} obtained for Lasso selection. 

\begin{theorem}
\label{thm:predictions-ol}
    Under the asymptotic framework of Section \ref{sec:AMP-framework}, fix $\lambda$ and a threshold $t\leq t_0$, where $t_0$ is the constant from  Assumption \ref{assmptn:1}, 
$\operatorname{FDP}^*(t; \lambda)$ and $\operatorname{TPP}^*(t; \lambda)$ both have limits in probability as $p\to \infty$, which we denote 
$$
\begin{aligned}
\operatorname{fdp}^*(t; \lambda) &:= \lim \operatorname{FDP}^*(t; \lambda) = 
\lim \; \frac{
\sum_{i=1}^{p} \mathbf{1}\left\{\frac{q_0(\hat{\beta}_i)}{q(\hat{\beta}_i)} \leq t,\ \beta_i = 0 \right\}
}{
\sum_{i=1}^{p} \mathbf{1}\left\{ \frac{q_0(\hat{\beta}_i)}{q(\hat{\beta}_i)} \leq t \right\}
}, \\[2ex]
\operatorname{tpp}^*(t; \lambda) &:= \lim \operatorname{TPP}^*(t; \lambda) = 
\lim \; \frac{\sum_{i=1}^{p} \mathbf{1}\left\{\ \frac{q_0(\hat{\beta}_i)}{q(\hat{\beta}_i)} \leq t,\ \beta_i \neq 0 \right\}
}{\sum_{i=1}^{p} \mathbf{1}(\beta_i \neq 0)}.
\end{aligned}
$$
Furthermore, these limits are given by 
\begin{align}
\label{eq:fdp-oracle}
\operatorname{fdp}^*(t; \lambda) &= \frac{(1-\varepsilon) \mathbb{P}(\frac{q_0(\eta_{\alpha\tau}(\tau Z))}{q(\eta_{\alpha\tau}(\tau Z))} \leq t )}{(1-\varepsilon) \mathbb{P}( \frac{q_0(\eta_{\alpha\tau}(\tau Z))}{q(\eta_{\alpha\tau}(\tau Z))} \leq t ) + \varepsilon \mathbb{P}(\frac{q_0(\eta_{\alpha\tau}(\Pi_1 +\tau Z))}{q(\eta_{\alpha\tau}(\Pi_1 +\tau Z))} \leq t )}
%
\end{align}
and 
\begin{equation}
\label{eq:tpp-oracle}
\operatorname{tpp}^*(t; \lambda) = \mathbb{P}\bigg(\frac{q_0(\eta_{\alpha\tau}(\Pi_1 +\tau Z))}{q(\eta_{\alpha\tau}(\Pi_1 +\tau Z))} \leq t \bigg). 
%
\end{equation}
\end{theorem}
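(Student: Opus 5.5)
The plan is to reduce everything to Lemma \ref{lem:lina-1.1} and to the separate convergence of its numerator and denominator. The proof of that lemma shows, via Theorem 1.5 of \citet{bayati2012lasso}, that
\[
\frac{1}{p}\sum_{i=1}^p t_{V_m}(\hat{\beta}_i,\beta_i) \to \mathbb{E}\, t_{V_m}(\eta_{\alpha\tau}(\Pi+\tau Z),\Pi), \qquad
\frac{1}{p}\sum_{i=1}^p t_{R_m}(\hat{\beta}_i,\beta_i) \to \mathbb{E}\, t_{R_m}(\eta_{\alpha\tau}(\Pi+\tau Z),\Pi)
\]
in probability. The indicators are sandwiched between Lipschitz approximations, and the gaps are controlled by continuity of the law of $\eta_{\alpha\tau}(\Pi+\tau Z)$ away from zero; the intervals are at distance at least $\Delta$ from $0$, so no mass of the atom at zero is involved. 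I take these per-$m$ limits as given. Summing over the finitely many $m$, and using disjointness of the intervals, gives $V^*/p$ and $R^*/p$ converging to the stated expectations. For the FDP, I then only need the denominator limit to be positive, which holds whenever the selection set has positive probability under the limiting law. In the degenerate case, $\mathbb{P}(\cdot)=0$ and the convention $0/0=0$ is used. The continuous mapping theorem then gives the FDP limit.

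Next I identify the limits in closed form. By Assumption \ref{assmptn:1}, $\bigcup_m (a_m(t),b_m(t)) = \{x: q_0(x)/q(x)\le t\}$. Hence
\[
\mathbb{E}\sum_m t_{V_m}(\eta_{\alpha\tau}(\Pi+\tau Z),\Pi) = \mathbb{P}\Big(\tfrac{q_0}{q}(\eta_{\alpha\tau}(\Pi+\tau Z))\le t,\ \Pi=0\Big) = (1-\varepsilon)\,\mathbb{P}\Big(\tfrac{q_0}{q}(\eta_{\alpha\tau}(\tau Z))\le t\Big),
\]
where I condition on $\Pi=0$, an event of probability $1-\varepsilon$. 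Similarly, the denominator splits over $\{\Pi=0\}$ and $\{\Pi\neq0\}$; on the latter $\Pi\sim\Pi_1$, which contributes the $\varepsilon$ term. This gives \eqref{eq:fdp-oracle}.

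For the TPP, the numerator is $R^*-V^*$. The same per-$m$ convergence applied to the indicator $\mathbf{1}\{x\in(a_m,b_m)\}\mathbf{1}\{y\neq0\}$, or simply the difference of the two limits, gives $(R^*-V^*)/p \to \varepsilon\,\mathbb{P}\big(\tfrac{q_0}{q}(\eta_{\alpha\tau}(\Pi_1+\tau Z))\le t\big)$. For the denominator, $|\mathcal{S}|/p\to\varepsilon>0$ almost surely by the strong law, since the $\beta_i$ are i.i.d.\ from $\Pi$; in the frequentist version this follows from weak convergence of $\Pi^{(n)}$, using that $\Pi$ has an atom at $0$ and $\Pi_1$ has none, so $0$ is handled via the mass at zero. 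The ratio of the two limits gives \eqref{eq:tpp-oracle}.

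The only genuinely delicate step is the first one: justifying convergence of the indicator averages, since Bayati--Montanari requires pseudo-Lipschitz test functions. This is already handled inside Lemma \ref{lem:lina-1.1} and its appendix lemmas. I would reuse that smoothing argument verbatim for the $\mathbf{1}\{y\neq0\}$ version, writing it as $\mathbf{1}\{x\in(a_m,b_m)\}$ minus the $t_{V_m}$ term, so no new approximation is needed.
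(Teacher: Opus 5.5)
Your proposal is correct and follows the paper's proof essentially step for step: the limits of $V^*/p$ and $R^*/p$ from Lemmas \ref{lem:lina-1.3}--\ref{lem:lina-1.4} (combined in Lemma \ref{lem:lina-1.1}), recombination of the disjoint intervals via Assumption \ref{assmptn:1}, splitting on $\Pi=0$ versus $\Pi\neq 0$, and the TPP as $(R^*-V^*)/p$ divided by $|\calS|/p\to\varepsilon$. The only inaccuracy is your description of the cited lemma: the paper controls the empirical mass near the interval endpoints with the anti-concentration Lemma \ref{lem:lina-1.2} and the KKT representation of $\hat\beta_A$, not with continuity of the limiting law; your two-sided Lipschitz sandwich would nonetheless also work and is simpler (it needs only that $\eta_{\alpha\tau}(\Pi+\tau Z)$ has no atoms away from zero, plus a smoothing of $\mathbf{1}\{y=0\}$ for the $t_{V_m}$ terms using $\PP(\Pi_1=0)=0$).
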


Before analyzing the EB procedure, we state the following result which formalizes the sense in which the oracle rule \eqref{eq:s_hat_ol} is asymptotically optimal. 
It says that the procedure \eqref{eq:s_hat_ol} is indeed an oracle, not only because it depends on unknown parameters but also in the usual sense that it is (asymptotically) optimal in the class \eqref{eq:T-h-lambda} for any $\lambda$. 

\begin{theorem}
\label{thm:phi-optimal}
Fix $\lambda>0$ and suppose Assumption \ref{assmptn:1} holds with some constant $t_0$. 
For a fixed threshold $t^*\leq t_0$ denote $\xi = \tpp^*(t^*; \lambda)$, the asymptotic TPP of the oracle procedure \eqref{eq:s_hat_ol}. 
Now consider selecting according to \eqref{eq:threshold-selection} for a fixed threshold $t$, where $T_i$ given by \eqref{eq:T-h-lambda} for some function $\varphi$, and let $\fdp^\varphi(t; \lambda)$ and $\tpp^\varphi(t; \lambda)$ be the corresponding limits of  the FDP and TPP of the selection rule, when they exist. 
If $\varphi$ is such that Theorem 1.5 of \citet{bayati2012lasso} applies to $\psi(x,y)=\mathbf{1}(\varphi(x)\geq t)\mathbf{1}(y=0)$ and $\psi(x,y)=\mathbf{1}(\varphi(x)\geq t)$, then for any procedure in the aforementioned class, $\tpp^\varphi(t;\lambda)=\xi$ implies 
$\fdp^*(t^*;\lambda)\leq \fdp^{\varphi}(t;\lambda)$. 
\end{theorem}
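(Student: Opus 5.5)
This is a Neyman–Pearson argument applied to the limiting two-groups model. Write $\hat\beta^\infty=\eta_{\alpha\tau}(\Pi+\tau Z)$ and $A_\varphi=\{x:\varphi(x)\ge t\}$. The hypothesis that Theorem 1.5 of \citet{bayati2012lasso} applies to the two indicator functions gives
\[
\frac{1}{p}\sum_i \mathbf 1\{\varphi(\hat\beta_i)\ge t,\beta_i=0\}\to(1-\epsilon)P_0(A_\varphi),\qquad
\frac1p\sum_i\mathbf 1\{\varphi(\hat\beta_i)\ge t\}\to(1-\epsilon)P_0(A_\varphi)+\epsilon P_1(A_\varphi),
\]
where $P_0$ and $P_1$ are the laws of $\eta_{\alpha\tau}(\tau Z)$ and $\eta_{\alpha\tau}(\Pi_1+\tau Z)$. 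Together with $\frac1p\sum\mathbf 1\{\beta_i\ne0\}\to\epsilon$, this yields
\[
\fdp^\varphi=\frac{(1-\epsilon)P_0(A_\varphi)}{(1-\epsilon)P_0(A_\varphi)+\epsilon P_1(A_\varphi)},\qquad \tpp^\varphi=P_1(A_\varphi).
\]
By Theorem \ref{thm:predictions-ol}, the oracle limits have the same form with $A^*=\{x: q_0(x)/q(x)\le t^*\}$, and by Assumption \ref{assmptn:1} this set equals $\{x:f_0(x)/f(x)\le t^*\}$ and excludes $0$.

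For fixed $\tpp=\xi$, the map $a\mapsto (1-\epsilon)a/((1-\epsilon)a+\epsilon\xi)$ is increasing. It therefore suffices to prove $P_0(A^*)\le P_0(A_\varphi)$ whenever $P_1(A_\varphi)=P_1(A^*)=\xi$. This is the Neyman–Pearson lemma for $P_1$ against $P_0$, with rejection region $A^*$. Since $f=(1-\epsilon)f_0+\epsilon f_1$, the condition $f_0/f\le t^*$ is equivalent to $f_1/f_0\ge k$ with $k=(1/t^*-(1-\epsilon))/\epsilon$, taking the ratio $+\infty$ where $f_0=0$. The argument is the standard one. The quantity $(\mathbf 1_{A^*}-\mathbf 1_{A_\varphi})(f_1-kf_0)$ is nonnegative pointwise with respect to a dominating measure $\mu=\delta_0+\text{Lebesgue}$. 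Integrating gives
\[
P_1(A^*)-P_1(A_\varphi)\ge k\,[P_0(A^*)-P_0(A_\varphi)].
\]
The left side is zero, and $k>0$ provided $t^*<1/(1-\epsilon)$, so $P_0(A^*)\le P_0(A_\varphi)$.

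The main obstacle is the atom at zero, where the "densities" are mixed. At $x=0$ I compare masses, $f_1(0)=1-w_1$ and $f_0(0)=1-w_0$, and the inequality still holds pointwise with respect to $\mu$, because $A^*$ excludes $0$ exactly when $(1-w_1)/(1-w_0)<k$. Assumption \ref{assmptn:1} guarantees this, since $0\notin\{f_0/f\le t^*\}$. I would also handle edge cases: if $t^*\ge 1/(1-\epsilon)$, then $k\le0$ and $A^*$ is essentially everything nonzero, but $t^*\le t_0$, and one may take $t_0<1$ without loss. Finally, if the denominators vanish, the convention $0/0=0$ makes the claim trivial when $\xi=0$, and when $\xi>0$ the denominators are positive.
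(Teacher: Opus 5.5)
Your proposal is correct and follows the paper's argument. Both reduce to the limiting univariate two-groups test, observe that at fixed power $\xi$ the limiting FDP is increasing in the type I error probability, and run the Neyman--Pearson integral inequality in its ``minimize type I error at fixed power'' form. Your treatment of the atom at zero, via the dominating measure $\delta_0+\text{Lebesgue}$, is in fact more careful than the paper's: the paper integrates only $q_0,q_1$ against $dx$ and so tacitly assumes the competitor $\varphi$ never rejects at $x=0$, whereas your use of Assumption~\ref{assmptn:1} to get $(1-w_1)/(1-w_0)<k$ closes that point. The same observation at zero also forces $t^*<1/(1-\epsilon)$, so $k>0$ holds automatically and your ``without loss'' remark about $t_0<1$ is not needed.
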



\smallskip
We turn to the analysis of the fully data-driven, EB selection procedure. 
To establish the corresponding limits of FDP and TPP under our asymptotic framework it is essential to prove that the estimator $\hat{q}(x)$ converges in probability to $q(x)$ uniformly over \([\Delta, \infty)\) and \((-\infty, -\Delta ]\), $\Delta$  defined in Assumption \ref{assmptn:1}. 
We start by showing pointwise convergence in Proposition \ref{prop:lina-2.1}, with the proof relying crucially on the following lemma. 

\begin{lemma}
\label{lem:lina-2.1.1}
Let 
\[
k(y, x) = \frac{1}{h}\mathcal{K}\left(\frac{x - y}{h}\right) = \frac{1}{\sqrt{2\pi h^2}} e^{-\frac{1}{2}(y - x)^2 / h^2}, \quad h > 0, 
\]
obtained for $\mathcal{K} = \phi$ the standard Gaussian kernel. 
Define
\( \psi(\hat{\beta}_i, \beta_i) = k(\hat{\beta}_i, x) \), with  \(h, x\) fixed. Then 
\( \psi: \mathbb{R} \times \mathbb{R} \to \mathbb{R} \) is a pseudo-Lipschitz function, and 
\[
\frac{1}{p} \sum_{i=1}^{p} \mathbf{1}\{\hat{\beta}_i \neq 0\} \, \psi(\hat{\beta}_i, \beta_i) \longrightarrow \mathbb{E}\left[\mathbf{1}\{\eta_{\alpha \tau}(\Pi + \tau Z) \neq 0\} \, \psi(\eta_{\alpha \tau}(\Pi + \tau Z), \Pi)\right]
\]
in probability as $p\to \infty$. 
\end{lemma}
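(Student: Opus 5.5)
The plan is to split the statement into its two claims and handle the discontinuous factor $\mathbf{1}\{\hat\beta_i\neq 0\}$ by an exact algebraic decomposition, not by smoothing.

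\emph{Step 1: $\psi$ is pseudo-Lipschitz.} Since $\psi(y,b)=k(y,x)$ does not depend on $b$, it suffices to look at $y\mapsto k(y,x)$. This function is bounded by $(2\pi h^2)^{-1/2}$. It is also globally Lipschitz in $y$, because its derivative $-(y-x)h^{-2}k(y,x)$ is bounded in absolute value by $\sup_u |u|\phi(u)/h^2 = (2\pi e)^{-1/2}h^{-2}$. Hence, with $L=(2\pi e)^{-1/2}h^{-2}$,
\[
|\psi(y,b)-\psi(y',b')|\le L\,|y-y'|\le L\,(1+\|(y,b)\|+\|(y',b')\|)\,\|(y,b)-(y',b')\|,
\]
so $\psi$ is pseudo-Lipschitz of order 2. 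Theorem 1.5 of \citet{bayati2012lasso} then gives
\[
\frac1p\sum_{i=1}^p k(\hat\beta_i,x)\to \EE\, k(\eta_{\alpha\tau}(\Pi+\tau Z),x)\quad\text{a.s.}
\]

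\emph{Step 2: remove the indicator.} Use the identity $\mathbf{1}\{y\neq0\}k(y,x)=k(y,x)-k(0,x)\mathbf{1}\{y=0\}$. It gives
\[
\frac1p\sum_{i=1}^p \mathbf{1}\{\hat\beta_i\neq0\}\,k(\hat\beta_i,x)=\frac1p\sum_{i=1}^p k(\hat\beta_i,x)\;-\;k(0,x)\Bigl(1-\frac{\|\hat\beta\|_0}{p}\Bigr).
\]
The same identity holds on the limiting side with $\eta_{\alpha\tau}(\Pi+\tau Z)$ in place of $\hat\beta_i$. Given Step 1, the lemma therefore reduces to showing
\[
\frac{\|\hat\beta\|_0}{p}\xrightarrow{P}\PP(|\Pi+\tau Z|>\alpha\tau).
\]

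\emph{Step 3: the sparsity level (main obstacle).} The indicator of $\{y=0\}$ is not pseudo-Lipschitz, so Theorem 1.5 cannot be applied to it directly.

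One direction follows from a continuous majorant. Take $g_\varepsilon(y)=(1-|y|/\varepsilon)_+\ge \mathbf{1}\{y=0\}$, which is Lipschitz. Theorem 1.5 yields
\[
\limsup_p \frac1p\#\{i:\hat\beta_i=0\}\le \EE\, g_\varepsilon\bigl(\eta_{\alpha\tau}(\Pi+\tau Z)\bigr).
\]
Letting $\varepsilon\downarrow0$, the right-hand side decreases to $\PP(\eta_{\alpha\tau}(\Pi+\tau Z)=0)=\PP(|\Pi+\tau Z|\le\alpha\tau)$. This works because $\eta_{\alpha\tau}(\Pi+\tau Z)$ has no mass on small nonzero values: it has a continuous density away from $0$.

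The reverse inequality requires ruling out many tiny but nonzero Lasso coordinates, which no continuous test function can detect. For this direction I would invoke the existing result that $\|\hat\beta\|_0/p$ converges to $\PP(|\Pi+\tau Z|>\alpha\tau)$. This was established in \citet{bogdan2013supplement} (and used by \citet{su2017false}) in deriving \eqref{eq:: FDP TPP lasso}; it is exactly the denominator limit there, i.e.\ $\epsilon\,\tpp^\lasso+2(1-\epsilon)\Phi(-\alpha)$. The underlying argument goes through the AMP iterates, whose sparsity is controlled by the subgradient (KKT) characterization of the Lasso solution. Combining Steps 1–3 gives convergence in probability of the left-hand side to $\EE[k(\eta,x)]-k(0,x)\PP(\eta=0)$, which equals the stated limit.
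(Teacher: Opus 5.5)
Your proof is correct. It is organized differently from the paper's, although both rest on the same external fact. The paper never removes the indicator exactly. It replaces $\mathbf{1}\{x\neq0\}\psi$ by the pseudo-Lipschitz surrogate $(1-Q(x/t'))\psi$ with $Q(x)=\max(1-|x|,0)$ and applies Theorem 1.5 to the surrogate. It then bounds the discrepancy by $\|\psi\|_\infty\cdot\frac1p\#\{i:0<|\hat\beta_i|<t'\}$, cites from \citet{bogdan2013supplement} that this fraction of tiny nonzero coordinates is negligible as $t'\to0$ (after $p\to\infty$), and finishes with dominated convergence.

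Your identity $\mathbf{1}\{y\neq0\}k(y,x)=k(y,x)-k(0,x)\mathbf{1}\{y=0\}$ instead puts the whole non-pseudo-Lipschitz part into a single scalar, $\|\hat\beta\|_0/p$, which does not depend on $x$ or $h$. The kernel term is then handled by Theorem 1.5 alone (even almost surely), and only one global sparsity limit has to be imported. Your $g_\varepsilon$ bound is the same smoothing device as the paper's $Q$. The half you cite is exactly where the paper needs its tiny-coordinate estimate, and the sparsity limit you invoke is established in that literature by controlling the fraction of tiny nonzero coordinates.

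The paper's version is more general in one direction: it works for any bounded pseudo-Lipschitz $\psi(\hat\beta_i,\beta_i)$. Your reduction uses that $\psi$ does not depend on $\beta_i$. For a general $\psi$, the subtracted term $\frac1p\sum_i\mathbf{1}\{\hat\beta_i=0\}\psi(0,\beta_i)$ is no longer a multiple of the sparsity level. In exchange, your version needs no boundedness of $\psi$, and it makes clear that for this kernel the lemma is equivalent to the known limit of the Lasso sparsity.

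One minor point: in Step 3 you do not need the remark that $\eta_{\alpha\tau}(\Pi+\tau Z)$ puts no mass near zero. Since $g_\varepsilon\downarrow\mathbf{1}\{y=0\}$ pointwise, dominated convergence gives $\EE\, g_\varepsilon(\eta_{\alpha\tau}(\Pi+\tau Z))\to\PP(\eta_{\alpha\tau}(\Pi+\tau Z)=0)$ directly.
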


\begin{proposition}
\label{prop:lina-2.1}
In the asymptotic framework of Section~\ref{sec:AMP-framework}, and under Assumption~\ref{assmptn:1}, 
\[
\hat{q}(x) \longrightarrow q(x)
\]
in probability as $p\to \infty$.
\end{proposition}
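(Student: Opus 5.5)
The plan is a bias--variance decomposition for the kernel estimator, where the ``variance'' part is handled by Lemma \ref{lem:lina-2.1.1} rather than by a law of large numbers. Fix $x$ (in the region of interest, $|x|\geq\Delta$; the argument works for any $x\neq 0$). Write $\hat q_h$ for the estimator \eqref{eq:w-hat-q-hat} with bandwidth $h$. Let $\hat\beta_\infty:=\eta_{\alpha\tau}(\Pi+\tau Z)$, and define the smoothed target
\[
q_h(x):=\mathbb{E}\big[\mathbf{1}\{\hat\beta_\infty\neq 0\}\,k(\hat\beta_\infty,x)\big]=\int k(y,x)\,q(y)\,dy .
\]
The second equality holds because the law of $\hat\beta_\infty$ restricted to $\{\hat\beta_\infty\neq0\}$ has Lebesgue density $q=wg$. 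Then
\[
|\hat q_h(x)-q(x)|\leq |\hat q_h(x)-q_h(x)|+|q_h(x)-q(x)|,
\]
and the two terms are treated separately.

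\textbf{Stochastic term.} For each fixed $h>0$, Lemma \ref{lem:lina-2.1.1} gives directly $\hat q_h(x)=\frac1p\sum_i \mathbf{1}\{\hat\beta_i\neq0\}k(\hat\beta_i,x)\to q_h(x)$ in probability. Note that the indicator $\mathbf{1}\{\hat\beta_i\neq 0\}$ is not continuous, so this step genuinely needs the lemma rather than Theorem 1.5 of \citet{bayati2012lasso} alone; I take it as given.

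\textbf{Deterministic (bias) term.} I will show $q_h(x)\to q(x)$ as $h\to0$. Since $k(\cdot,x)$ is an approximate identity, it suffices that $q$ is bounded and continuous at $x$.
\begin{itemize}
\item Away from zero, $q=(1-\epsilon)q_0+\epsilon w_1g_1$.
\item $q_0$ is an explicit shifted Gaussian density on each half-line, so it is continuous and bounded.
\item $w_1g_1(y)=\mathbb{E}\big[\tau^{-1}\phi\big((y\pm\alpha\tau-\Pi_1)/\tau\big)\big]$ on $y>0$ (resp.\ $y<0$). This is bounded by $\phi(0)/\tau$ and is continuous by dominated convergence.
\item At $x=0$, the one-sided limits agree by the same formulas, so continuity holds there too.
\end{itemize}
Then $|q_h(x)-q(x)|\le\int\phi(u)\,|q(x-hu)-q(x)|\,du\to0$, again by dominated convergence.

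\textbf{Combining the two terms.} For each fixed $h$ the stochastic term vanishes in probability, and the bias vanishes as $h\to0$. So I will take the bandwidth $h=h_p\to0$ sufficiently slowly. Choose $h_k\downarrow0$ with $|q_{h_k}(x)-q(x)|<1/k$. Then choose increasing $p_k$ such that $\mathbb{P}(|\hat q_{h_k}(x)-q_{h_k}(x)|>1/k)<1/k$ for all $p\ge p_k$. Set $h_p=h_k$ for $p_k\le p<p_{k+1}$. Then $\hat q_{h_p}(x)\to q(x)$ in probability. If instead the bandwidth is regarded as fixed, the same argument shows that $\hat q(x)$ converges to $q_h(x)$, which is within any prescribed tolerance of $q(x)$ for $h$ small.

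\textbf{Main obstacle.} I expect the delicate point to be exactly this interchange. AMP theory supplies no convergence rate, so an explicit bandwidth schedule cannot be given and only the existence of a slowly decaying $h_p$ can be asserted. I would state the result in that form, possibly with the usual caveat that $h_p\to0$ slowly enough. The remaining ingredients, namely Lemma \ref{lem:lina-2.1.1} and the continuity of $q$, are routine.
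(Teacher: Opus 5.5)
Your proposal is correct and follows the paper's own argument: Lemma~\ref{lem:lina-2.1.1} gives $\hat q(x)\to\int k(y,x)\,q(y)\,dy$ for fixed $h$, and dominated convergence sends this to $q(x)$ as $h\to0$. You improve on the paper by verifying, rather than assuming, that $q$ is bounded and continuous away from $0$, and by turning its iterated limit $\lim_{h\to0}\lim_{p\to\infty}\hat q(x)=q(x)$ into a genuine single limit via the diagonal bandwidth $h_p$; one aside is wrong but harmless, namely that $q$ is continuous at $0$, since its one-sided limits there are $\tau^{-1}\mathbb{E}\,\phi\bigl((\alpha\tau\mp\Pi)/\tau\bigr)$, which differ for asymmetric $\Pi_1$, but only global boundedness and continuity at the fixed $x\neq0$ enter your dominated convergence step.
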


To extend this to uniform convergence of $\hat{q}(x)$ we will need an additional assumption. 

\begin{assumption}
\label{assmptn:2}
There is a constant $\mathcal{L} >0$ such that the unnormalized density $q(x)$ is uniformly continuous on the intervals \([-\mathcal{L}, -\Delta]\) and  \([\Delta, \mathcal{L}]\), where $\Delta>0$ is the constant from Assumption \ref{assmptn:1}.
\end{assumption}

\begin{proposition}
\label{prop:lina-2.2.a}
Under Assumptions \ref{assmptn:1} and \ref{assmptn:2}, the kernel density estimator $\hat{q}(x)$ defined in \eqref{eq:w-hat-q-hat} for $\mathcal{K}(x)=\phi(x) = \frac{1}{\sqrt{2\pi}} e^{-x^2 / 2}$, converges uniformly in probability to \( q(x)\) on \([\Delta, \mathcal{L}]\), i.e., 
\[
\sup_{x \in [\Delta, \mathcal{L}]} \left| \hat{q}(x) - q(x) \right| \longrightarrow 0 
\]
in probability as $p\to \infty$. 
\end{proposition}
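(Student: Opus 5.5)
The plan is a Glivenko--Cantelli-type argument rather than a grid-plus-pointwise argument. Write $\hat q(x)=\int k(y,x)\,d\hat Q_p(y)$, where $\hat Q_p(y)=\frac1p\sum_i \mathbf 1\{\hat\beta_i\neq 0,\ \hat\beta_i\le y\}$ is the (sub-probability) empirical distribution function of the nonzero Lasso estimates. Let $Q(y)=\int_{-\infty}^y q(u)\,du$ be its limit, with the jump at $0$ removed since zero estimates are excluded.

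The error splits into a stochastic term and a bias term:
\[
\hat q(x)-q(x)=\Big(\int k(y,x)\,d\hat Q_p(y)-\int k(y,x)\,dQ(y)\Big)+\Big(\int k(y,x)q(y)\,dy-q(x)\Big).
\]
I will show each is small uniformly over $x\in[\Delta,\mathcal L]$. As in Proposition \ref{prop:lina-2.1}, where the limit in Lemma \ref{lem:lina-2.1.1} is the smoothed density $k*q$, the bandwidth must be taken as $h=h_p\to 0$. The rate at which it may shrink will be chosen at the end.

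\textbf{Step 1 (uniform convergence of $\hat Q_p$).} For each fixed $y\neq 0$, $\hat Q_p(y)\to Q(y)$ in probability. This follows from Theorem 1.5 of \citet{bayati2012lasso} applied to indicator functions, smoothed by pseudo-Lipschitz sandwiches exactly as in the proof of Lemma \ref{lem:lina-1.1} (Lemmas \ref{lem:lina-1.2}--\ref{lem:lina-1.4}). Here one uses that $Q$ is continuous away from $0$, because $q$ is bounded by $1/(\tau\sqrt{2\pi})$ and has no atoms. Both $\hat Q_p$ and $Q$ are monotone on $(0,\infty)$ and on $(-\infty,0)$, and $Q$ is continuous and bounded. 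The classical Pólya/Glivenko--Cantelli bracketing argument, using a finite grid where $Q$ increases by at most $\varepsilon$ between grid points, therefore upgrades pointwise to uniform convergence:
\[
D_p:=\sup_{y\neq 0}|\hat Q_p(y)-Q(y)|\to 0 \quad\text{in probability}.
\]
The limits at $0^\pm$ are handled by the total masses $\hat w\to w$, which holds by Lemma \ref{lem:lina-1.2}-type arguments.

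\textbf{Step 2 (stochastic term).} Integrate by parts in $y$. The kernel $k(\cdot,x)$ is smooth and vanishes at $\pm\infty$, and the signed measure $\hat Q_p-Q$ has total mass tending to $0$. Hence
\[
\Big|\int k(y,x)\,d(\hat Q_p-Q)(y)\Big|\le D_p\int|\partial_y k(y,x)|\,dy+o_P(1)=D_p\sqrt{2/\pi}\,h^{-1}+o_P(1),
\]
uniformly in $x$. Since $D_p\to 0$ in probability, a standard diagonal argument gives a deterministic sequence $h_p\to 0$ with $D_p/h_p\to 0$ in probability. For this choice the stochastic term vanishes uniformly.

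\textbf{Step 3 (bias term).} For $x\in[\Delta,\mathcal L]$, split $\int k(y,x)\,(q(y)-q(x))\,dy$ into the regions $|y-x|\le \eta$ and $|y-x|>\eta$, with $\eta<\Delta/2$.
\begin{itemize}
\item On the far region, boundedness of $q$ and Gaussian tails give a contribution of at most $2\sup q\cdot\Phi(-\eta/h_p)\to 0$.
\item On the near region, $y$ stays in $[\Delta/2,\mathcal L+\eta]$, away from the point $0$ where $q$ may jump. Uniform continuity of $q$ there gives a bound by the modulus of continuity $\omega(\eta)$.
\end{itemize}
Assumption \ref{assmptn:2} is stated on $[\Delta,\mathcal L]$. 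I would apply it with $\Delta$ replaced by $\Delta/2$ and $\mathcal L$ slightly enlarged, which is harmless since $q$ is continuous on $(0,\infty)$ as a Gaussian mixture of shifted densities. Letting $p\to\infty$ and then $\eta\to 0$ gives uniform vanishing of the bias.

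\textbf{Main obstacle.} I expect Step 1 to be the hardest: the Bayati--Montanari theorem only handles pseudo-Lipschitz test functions, so indicator functions require the approximation argument of Lemma \ref{lem:lina-1.1}. The coupling of the bandwidth to the unknown rate of $D_p$ is also delicate. The fallback is to state the result for $h_p\to 0$ ``sufficiently slowly''. The remaining steps are routine.
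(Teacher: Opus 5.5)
Your argument is correct, and it takes a genuinely different route from the paper's. The paper fixes a grid on $[\Delta,\mathcal L]$ and applies Proposition~\ref{prop:lina-2.1} at the finitely many grid points with a union bound. It controls $\hat q$ between grid points by the kernel's Lipschitz bound $|\hat q(x)-\hat q(x_i)|\le L\Delta'/h^2$, and controls $q$ there by Assumption~\ref{assmptn:2}.

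That approach only needs pointwise limits of the pseudo-Lipschitz kernel averages of Lemma~\ref{lem:lina-2.1.1}. So it avoids indicator functions and any CDF-level uniformity. The price is that $h$ is held fixed, since the mesh must be of order $h^2$. Yet, as you correctly noticed, the fixed-$h$ pointwise limit is $(k_h*q)(x)=\int k(y,x)q(y)\,dy$, not $q(x)$. Because the grid must refine as $h\to 0$, closing the paper's argument actually requires a bias bound for $k_h*q-q$ that is uniform on $[\Delta,\mathcal L]$ (precisely your Step~3), followed by a diagonal choice of $h_p$. The paper does not make this explicit.

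Your route instead uses P\'olya's argument and integration by parts to get a single estimate, $\sup_{x\in\mathbb R}|\hat q_h(x)-(k_h*q)(x)|\le\sqrt{2/\pi}\,D_p/h$, valid for all $h$ at once. This cleanly separates stochastic error from bias. It also makes the admissible bandwidth schedule explicit: any deterministic $h_p\to 0$ with $D_p=o_P(h_p)$ works.

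The cost is that you need convergence of the indicator averages $\hat Q_p(y)$. That draws on the anti-concentration machinery of Lemmas~\ref{lem:lina-1.2}--\ref{lem:lina-1.3}, the near-zero mass bound of \citet{bogdan2013supplement}, and $\hat w\to w$. The rate for $h_p$ is also non-constructive, since no rate for $D_p$ is available.

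Two minor points. First, the $o_P(1)$ term in Step~2 is unnecessary: $k(\cdot,x)$ vanishes at $\pm\infty$, and Fubini gives $\int k(y,x)\,d(\hat Q_p-Q)(y)=-\int(\hat Q_p-Q)(y)\,\partial_y k(y,x)\,dy$ exactly. Second, as you essentially observe, Assumption~\ref{assmptn:2} is automatic here. For $x>0$ we have $q(x)=\EE\big[\tau^{-1}\phi\big((x+\alpha\tau-\Pi)/\tau\big)\big]$, which is Lipschitz on $(0,\infty)$ with constant $\sup|\phi'|/\tau^2$, and similarly on $(-\infty,0)$.
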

\begin{remark}
    The uniform convergence in Proposition~\ref{prop:lina-2.2.a} holds also on the interval \([-\mathcal{L}, -\Delta ]\).
\end{remark}

We are now in a position to derive the asymptotic predictions for the EB procedure. 

\begin{theorem}
\label{thm:predictions-eb}
    In the asymptotic framework of Section \ref{sec:AMP-framework} and under Assumptions \ref{assmptn:1} and \ref{assmptn:2}, fix $\lambda$ and a threshold $t\leq t_0$, where $t_0$ is the constant from  Assumption \ref{assmptn:1}. 
    Then 
$\operatorname{FDP}^{EB}(t; \lambda)$ and $\operatorname{TPP}^{EB}(t; \lambda)$ both have limits in probability as $p\to \infty$, which we denote 
$$
\begin{aligned}
\operatorname{fdp}^{EB}(t; \lambda) &:= \lim \operatorname{FDP}^{EB}(t; \lambda) = 
\lim \; \frac{
\sum_{i=1}^{p} \mathbf{1}\left\{\hat{\beta}_i \neq 0,\ \frac{\hat{q}_0(\hat{\beta}_i)}{\hat{q}(\hat{\beta}_i)} \leq t,\ \beta_i = 0 \right\}
}{
\sum_{i=1}^{p} \mathbf{1}\left\{\hat{\beta}_i \neq 0,\ \frac{\hat{q}_0(\hat{\beta}_i)}{\hat{q}(\hat{\beta}_i)} \leq t \right\}
}, \\[2ex]
\operatorname{tpp}^{EB}(t; \lambda) &:= \lim \operatorname{TPP}^{EB}(t; \lambda) = 
\lim \; \frac{\sum_{i=1}^{p} \mathbf{1}\left\{ \hat{\beta}_i \neq 0,\ \frac{\hat{q}_0(\hat{\beta}_i)}{\hat{q}(\hat{\beta}_i)} \leq t,\ \beta_i \neq 0 \right\}
}{\sum_{i=1}^{p} \mathbf{1}(\beta_i \neq 0)}.
\end{aligned}
$$
Furthermore, it holds that
\begin{equation}
\label{fdp-eb}
\operatorname{fdp}^{EB}(t; \lambda) = \operatorname{fdp}^*(t; \lambda),\quad\quad\quad \operatorname{tpp}^{EB}(t; \lambda) = \operatorname{tpp}^*(t; \lambda), 
\end{equation}
where $\operatorname{fdp}^*(t; \lambda)$ and $\operatorname{fdp}^*(t; \lambda)$ are the formulas in \eqref{eq:fdp-oracle} and \eqref{eq:tpp-oracle}, respectively. 
\end{theorem}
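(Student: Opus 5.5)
The plan is a sandwich argument. The EB rejection region $\{x\neq 0:\hat q_0(x)/\hat q(x)\le t\}$ will be squeezed, with probability tending to one, between the oracle regions at thresholds $t-\gamma$ and $t+\gamma$. The counts $V^*$ and $R^*$ at those thresholds are controlled by Lemma \ref{lem:lina-1.1} and Theorem \ref{thm:predictions-ol}, and we finish by letting $\gamma\downarrow 0$ using continuity of the limiting expressions in the threshold.

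\textbf{Step 1: uniform convergence of the estimated ratio.} We first show $\sup_{\Delta\le |x|\le \mathcal{L}} |\hat q_0(x)/\hat q(x) - q_0(x)/q(x)|\to 0$ in probability.
\begin{itemize}
\item For the numerator, $\hat q_0(x)=\hat w_0\hat g_0(x)$ is an explicit smooth function of $(\hat\tau,\widehat{\alpha\tau})$. These estimators are consistent by \citet{mousavi2018consistent}, and on $|x|\ge\Delta$ the Gaussian form is uniformly Lipschitz in the parameters. Hence $\sup_{|x|\ge \Delta}|\hat q_0(x)-q_0(x)|\to 0$.
\item For the denominator, Proposition \ref{prop:lina-2.2.a} and the subsequent Remark give $\sup|\hat q-q|\to 0$ on $[\Delta,\mathcal{L}]\cup[-\mathcal{L},-\Delta]$.
\item To pass to the ratio we need $q$ bounded below on this compact set. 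Since $q$ is a Gaussian convolution restricted to nonzero values, it is continuous and positive there, so $\inf q>0$.
\end{itemize}

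\textbf{Step 2: handling the tails $|x|>\mathcal{L}$.} The ratio cannot be controlled uniformly out there. Instead we bound the number of coordinates with $|\hat\beta_i|>\mathcal{L}$ using Theorem 1.5 of \citet{bayati2012lasso} with a pseudo-Lipschitz surrogate, e.g. $\min(1,(|x|-\mathcal{L}+1)_+)$. This gives
\[
\tfrac1p\#\{i:|\hat\beta_i|>\mathcal{L}\}\le \PP(|\eta_{\alpha\tau}(\Pi+\tau Z)|>\mathcal{L}-1)+o_P(1).
\]
This quantity can be made arbitrarily small by taking $\mathcal{L}$ large. If Assumption \ref{assmptn:2} fixes $\mathcal{L}$, we instead use uniform continuity on arbitrary compacts via the same KDE argument, or simply accept an error term that vanishes as $\mathcal{L}\to\infty$.

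The region $0<|x|<\Delta$ needs separate treatment. There $q_0/q>t_0\ge t$ by Assumption \ref{assmptn:1}, but we need the EB ratio to also exceed $t$, which calls for a margin. We would verify that $q_0/q$ is bounded away from $t$ on $(0,\Delta]$, or enlarge the exceptional set slightly and absorb it into an $o(1)$ fraction using the continuity of the law of $\eta_{\alpha\tau}(\Pi+\tau Z)$ away from zero.

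\textbf{Step 3: the sandwich.} On the event $\sup|\hat r-r|<\gamma$, the EB selected set restricted to $\Delta\le|\hat\beta_i|\le\mathcal{L}$ lies between $\hat{\calS}^*(t-\gamma)$ and $\hat{\calS}^*(t+\gamma)$. This holds both for the false-rejection count and for the total count. Dividing by $p$, or by $|\calS|$, and applying the limits behind Theorem \ref{thm:predictions-ol} at $t\pm\gamma$ (taking $t+\gamma\le t_0$ when $t<t_0$; the endpoint $t=t_0$ is handled by a one-sided argument or by shrinking), we get
\[
\liminf V^{EB}/p\ge v(t-\gamma)-\epsilon_{\mathcal L},\qquad \limsup V^{EB}/p\le v(t+\gamma)+\epsilon_{\mathcal L},
\]
where $v(s)=(1-\epsilon)\PP(q_0/q(\eta_{\alpha\tau}(\tau Z))\le s)$, with analogous bounds for $R^{EB}$ and the true positives.

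\textbf{Step 4: letting the slack vanish.} The main obstacle is continuity of $s\mapsto \PP(q_0/q(\eta_{\alpha\tau}(\Pi+\tau Z))\le s)$ at $s=t$, i.e. the absence of atoms of the ratio. This is implicitly the same requirement used for the oracle limits. We plan to derive it from Assumption \ref{assmptn:1}: the level sets are interval unions whose endpoints $a_m(s),b_m(s)$ we would argue vary continuously in $s$, and $\eta_{\alpha\tau}(\Pi+\tau Z)$ has a continuous density off zero, so the boundary has probability zero.

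Given this continuity, we let $\gamma\to0$ and $\mathcal{L}\to\infty$. The ratio of limits is continuous because the denominator $r(t)>0$, and this yields $\fdp^{EB}=\fdp^*$ and $\tpp^{EB}=\tpp^*$.
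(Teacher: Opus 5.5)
Your proposal is correct and follows the same route as the paper. On the event that $\hat q_0/\hat q$ is uniformly within $\delta'$ of $q_0/q$, the paper sandwiches the EB rejection and false-rejection counts between the oracle counts at $t\pm\delta'$, applies the oracle limits of Theorem \ref{thm:predictions-ol} at the perturbed thresholds, and lets $\delta'\to0$ using that $q_0/q$ evaluated at $\eta_{\alpha\tau}(\Pi+\tau Z)$ has no atom at $t$.

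Your separate handling of $|\hat\beta_i|>\mathcal{L}$, of $0<|\hat\beta_i|<\Delta$, of the constraint $t+\gamma\le t_0$, and of the lower bound on $q$ fills in steps the paper skips, since it simply asserts uniform convergence of the ratio over all $\hat\beta_i$. In Step 4, however, you do not need continuity of the endpoints $a_m(s),b_m(s)$, which Assumption \ref{assmptn:1} alone does not give. It suffices that $\{x\neq 0: q_0(x)/q(x)=t\}$ is Lebesgue-null. This holds because $q_0/q$ is real-analytic and non-constant on each half-line: $q$ is a Gaussian convolution, and $\Pi_1$ has no atom at zero.
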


\begin{remark}
Theorem \ref{thm:predictions-eb} holds more generally for any kernel $\mathcal{K}$ which is Lipschitz continuous, i.e., if there exists a constant \( L \) such that $|\mathcal{K}(x) - \mathcal{K}(y)| \leq L |x - y|$ for all $x, y \in \mathbb{R}$. 
This condition is satisfied for the Gaussian kernel used above. 
\end{remark}

Theorem \ref{thm:predictions-eb} says that for any fixed $t$, the oracle rule \eqref{eq:s_hat_ol} and the EB competitor \eqref{eq:s_hat_eb} have the same FDP and TPP asymptotically; in other words, under our assumptions, the theory predicts that the tradeoff curves for the two procedures coincide. 
This result is encouraging because it says that for any fixed $\lambda$ the performance of the optimal procedure in the class \eqref{eq:T-h-lambda} is asymptotically attainable by a legitimate procedure. 
Still, the performance depends on the choice of the regularization parameter $\lambda$, and different values of $\lambda$ can give very different tradeoff curves (as is the case for thresholded-Lasso). 
For each distribution $\Pi$, 
it is then natural to ask what value of $\lambda$ is optimal, in the sense of minimizing the asymptotic FDP at a given asymptotic TPP level. 
For thresholded-Lasso selection, 
\cite{wang2020bridge} proved that the optimal value of $\lambda$ is the minimizer of the asymptotic mean squared error, 
\begin{equation}
\label{eq:lambda-opt}
\lambda^* := \argmin_{\lambda} \EE [ (\eta_{\alpha\tau}(\Pi + \tau Z)-\Pi)^2 ], 
\end{equation}
and from \eqref{eq:tau}, it is immediate that $\lambda^*$ is equivalently given by the  minimizer of $\tau$ in $\lambda$. 
Our next result asserts that $\lambda^*$ defined above is also optimal for the oracle \eqref{eq:s_hat_ol} that selects based on the lfdr. 
In the statement of the theorem below and in its proof, whenever an asterisk ($*$) superscript appears, it refers to the quantity when $\lambda=\lambda^*$, and if there is no superscript it refers to an arbitrary $\lambda\neq \lambda^*$; for example, $\lfdr^*(x)$ is the lfdr in \eqref{lfdr} computed under $\lambda=\lambda^*$, and $\lfdr(x)$ is this quantity for an arbitrary $\lambda\neq \lambda^*$. 

\begin{theorem}
\label{thm:optimal-lambda}
Under Assumption~\ref{assmptn:1}, let $t_0^*$ be the value of $t_0$ when  $\lambda^*$. 
For some $t^*\leq t_0^*$ consider the oracle that selects when $\lfdr^*(\hat{\beta}_i)\leq t^*$, and let $\xi = \tpp^*(t^*; \lambda^*)$ be the corresponding asymptotic TPP. 
Suppose there exists another $\lambda$, and a corresponding fixed $t$, such that the oracle which selects when $\lfdr(\hat{\beta}_i)\leq t$ has $\tpp^*(t;\lambda)=\xi$. 
For any such $\lambda$, we have 
$$
\fdp^*(t^*; \lambda^*)\leq \fdp^*(t; \lambda). 
$$
\end{theorem}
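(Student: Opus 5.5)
The plan is to reduce the comparison across $\lambda$ to a comparison of two Gaussian-sequence (denoising) problems with different noise levels, and then use a garbling argument. By Theorem~\ref{thm:predictions-ol} and Theorem~\ref{thm:phi-optimal}, for each fixed $\lambda$ the asymptotic pair $(\tpp^*,\fdp^*)$ of the oracle at threshold $t$ is the error/power pair of the Neyman--Pearson optimal test of $H_0:\Pi=0$ versus $H_1:\Pi\sim\Pi_1$ based on a single observation $\eta_{\alpha\tau}(\Pi+\tau Z)$. Moreover, the map $x\mapsto \eta_{\alpha\tau}(x)$ is a deterministic garbling of $x=\Pi+\tau Z$. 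Consequently the lfdr-oracle based on $\eta_{\alpha\tau}(\Pi+\tau Z)$ can do no better than the lfdr-oracle based on the un-thresholded observation $\Pi+\tau Z$. However, this bound goes in the wrong direction for comparing $\lambda^*$ with $\lambda$, so the plan must compare the thresholded observations directly.

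\textbf{Step 1: reduce to a single observation.} I would express the asymptotic FDP at a given TPP level $\xi$ as a functional of the binary experiment $\mathcal E_\lambda=\{\mathcal L(\eta_{\alpha\tau}(\tau Z)),\ \mathcal L(\eta_{\alpha\tau}(\Pi_1+\tau Z))\}$. Writing $\fdp=(1-\epsilon)a/((1-\epsilon)a+\epsilon\xi)$, where $a$ is the null rejection probability, it suffices to show that the minimal type-I probability $a$ achieving power $\xi$ in $\mathcal E_{\lambda^*}$ is at most that in $\mathcal E_\lambda$. By Theorem~\ref{thm:phi-optimal} (the Neyman--Pearson lemma), the oracle at $\lambda^*$ attains the minimum for $\mathcal E_{\lambda^*}$. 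It is also important that selections exclude $\hat\beta_i=0$; under Assumption~\ref{assmptn:1} this is automatic for $t\le t_0$.

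\textbf{Step 2: establish a Blackwell-type domination.} I would show that $\mathcal E_{\lambda}$ is a garbling of $\mathcal E_{\lambda^*}$ in the region away from zero. Since $\tau^*\le\tau$ (because $\lambda^*$ minimizes $\tau$ by \eqref{eq:tau}), we can write $\Pi+\tau Z\overset{d}{=}\Pi+\tau^*Z+\sqrt{\tau^2-\tau^{*2}}Z'$. The difficulty is that the thresholds $\alpha\tau$ and $\alpha^*\tau^*$ differ, and soft-thresholding discards information inside $[-\alpha^*\tau^*,\alpha^*\tau^*]$. To handle this, I would follow \cite{wang2020bridge} and exploit the structure of their argument rather than a pure garbling. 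For any rejection region $A\subset\{|x|>\alpha\tau\}$ in the $\eta_{\alpha\tau}$ scale, take its preimage $\tilde A=\{y:|y|>\alpha\tau,\ \eta_{\alpha\tau}(y)\in A\}$ in the scale of the un-thresholded $y=\Pi+\tau Z$. Since $\tilde A$ avoids $[-\alpha\tau,\alpha\tau]$, it suffices to produce a randomized test based on $\Pi+\tau^*Z$ with the same power and no larger size, rejecting only when $|\Pi+\tau^* Z|>\alpha^*\tau^*$. The candidate test adds independent noise $\sqrt{\tau^2-\tau^{*2}}Z'$ and applies $\tilde A$. The obstacle is that this randomized test may reject when $|\Pi+\tau^*Z|\le\alpha^*\tau^*$, i.e.\ where the $\lambda^*$-observation is $0$.

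\textbf{Step 3: handle the obstacle.} Because lfdr tests at small $t\le t_0^*$ reject only far from zero, I would instead compare via the ROC curves. It suffices to show that for every $\xi$ in the relevant range the optimal ROC of $\mathcal E_{\lambda^*}$ lies above that of $\mathcal E_\lambda$. Both ROCs coincide with the ROC of the un-thresholded experiment with noise $\tau^*$, respectively $\tau$, restricted to tests whose rejection regions avoid the threshold interval. The un-thresholded ROC with noise $\tau^*$ dominates the one with $\tau$ by the Gaussian garbling above. I would then argue that the optimal restricted test for $\mathcal E_\lambda$ at level $\xi$ can be matched, or improved, by an lfdr region for $\mathcal E_{\lambda^*}$, using the fact that at the MSE-optimal $\lambda^*$ the threshold $\alpha^*\tau^*$ is small enough (the first-order condition $\partial\tau/\partial\lambda=0$) that the relevant regions lie outside it. This last step — justifying that the zero-region constraint does not bind at $\lambda^*$ for the TPP levels considered — is the main obstacle, and I expect to need the explicit first-order condition characterizing $\lambda^*$ through \eqref{eq:tau}--\eqref{eq:lambda}. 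Finally, substituting the smaller null rejection probability into the FDP formula \eqref{eq:fdp-oracle} yields $\fdp^*(t^*;\lambda^*)\le\fdp^*(t;\lambda)$.
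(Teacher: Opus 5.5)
Your ingredients are the right ones: Neyman--Pearson in the ``minimal type I error at fixed power'' form, the Gaussian garbling $\Xtil^*+\sqrt{\tau^2-(\tau^*)^2}\,\tilde Z\overset{d}{=}\Xtil$ coming from $\tau^*\le\tau$, and the fact that $X=\eta_{\alpha\tau}(\Xtil)$ is a function of $\Xtil$. But the argument is not closed, and the step you call the main obstacle is assigned to a tool that cannot deliver it.

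First, the data-processing bound you set aside as going ``in the wrong direction'' is exactly what handles the arbitrary $\lambda$. The $\lambda$-oracle is a test based on $\Xtil$, so at power $\xi$ its type I error is at least that of the \emph{unrestricted} Neyman--Pearson (NP) test based on $\Xtil$. You do not need to restrict to regions avoiding $[-\alpha\tau,\alpha\tau]$, and you need no garbling relation between $\mathcal E_\lambda$ and $\mathcal E_{\lambda^*}$. Likewise, the randomized test in your Step 2 is allowed to reject inside $[-\alpha^*\tau^*,\alpha^*\tau^*]$; you only compare it with the unrestricted NP test based on $\Xtil^*$. This gives the paper's chain, at power $\xi$,
\[
\FPR(\lambda^*\text{-oracle})=\FPR(\text{NP on }\Xtil^*)\le\FPR(\text{NP on }\Xtil)\le\FPR(\lambda\text{-oracle}).
\]
Everything hinges on the first equality, i.e., on the level-$\xi$ NP region for $\Xtil^*$ not meeting $[-\alpha^*\tau^*,\alpha^*\tau^*]$.

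That equality does not follow from $\lambda^*$ being MSE-optimal. The first-order condition is a single scalar equation fixing the MSE-optimal threshold. Whether the NP region meets the zone depends on $\xi$ and on the shape of the likelihood ratio. For instance, with a weak component of $\Pi_1$ on one side, the NP boundary on that side can fall inside $(-\alpha^*\tau^*,\alpha^*\tau^*)$ at moderate $\xi$. So no property of $\lambda^*$ alone controls it.

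What controls it is the hypothesis $t^*\le t_0^*$, i.e., Assumption~\ref{assmptn:1} at $\lambda^*$. The paper uses it in the form $\lfdr^*(X)\le t^*\iff\widetilde{\lfdr}^*(\Xtil^*)\le t^*$, which can be justified in two steps:
\begin{itemize}
\item For $x\neq0$ one has $\lfdr^*(x)=\widetilde{\lfdr}^*\bigl(x+\alpha^*\tau^*\,\mathrm{sign}(x)\bigr)$. So the assumption gives $\widetilde{\lfdr}^*>t^*$ on $\alpha^*\tau^*<|\tilde x|<\alpha^*\tau^*+\Delta$.
\item The likelihood ratio $\tilde x\mapsto\EE\exp\{(\Pi_1\tilde x-\Pi_1^2/2)/(\tau^*)^2\}$ is a mixture of exponentials, hence convex. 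Therefore the acceptance region $\{\widetilde{\lfdr}^*>t^*\}$ is an interval, and it contains all of $[-\alpha^*\tau^*,\alpha^*\tau^*]$.
\end{itemize}
With this, the $\lambda^*$-oracle is the unrestricted NP test on $\Xtil^*$ and the chain closes. Without it, your proposal stops short of the conclusion.
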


It is important to note that Theorem \ref{thm:optimal-lambda} implies a stronger oracle than that of Theorem \ref{thm:phi-optimal}. 
Specifically, 
if $\mathcal{P}$ denotes the collection of univariate functions $\varphi$ satisfying the condition in Theorem \ref{thm:phi-optimal}, then 
by combining Theorem \ref{thm:optimal-lambda} with Theorem \ref{thm:phi-optimal} and using the fact that the oracle and EB procedures have the same asymptotic tradeoff curves by Theorems \ref{thm:predictions-ol} and \ref{thm:predictions-eb}, we conclude that for any pair $(\varphi, \lambda)$ such that $\varphi \in\mathcal{P}$, if $\tpp^*(t^*; \lambda^*) = \tpp^\varphi(t;\lambda)$ then $\fdp^*(t^*; \lambda^*)\leq \fdp^{\varphi}(t; \lambda)$. 
That is, if we fix the asymptotic TPP level, then the oracle procedure with $\lambda^*$ achieves the minimum asymptotic FDP among all choices of both $\varphi$ and $\lambda$  in \eqref{eq:T-h-lambda}, while Theorem \ref{thm:phi-optimal} only says that for any {\em fixed} value of $\lambda$ it achieves the minimum among  $\varphi\in \mathcal{P}$. 
This has important consequences because the extended class, allowing to  search also over $\lambda$, 
includes the Lasso selection procedure \eqref{eq:lasso-selection}, which implies that at $\lambda^*$ the asymptotic FDP of the EB procedure is necessarily smaller than Lasso if we fix the TPP level (this does not rule out the possibility that for some other $\lambda$ Lasso is better than EB). 
In the last section of the paper we explain why we chose to state the optimality result in Theorem \ref{thm:phi-optimal} for an arbitrary fixed $\lambda$ instead of the optimal value $\lambda^*$.

\smallskip
In the statement of Theorem \ref{thm:optimal-lambda} above, the assertion that the result holds also for the EB procedure is trivially implied by the fact that the tradeoff curves of these two procedures coincide, under our assumptions. 
Still, the optimal value $\lambda^*$ depends on unknown quantities of the problem, i.e., it is again an oracle value and cannot be used in practice. 
The characterization of $\lambda^*$ in \eqref{eq:lambda-opt}, however, suggests that it can be well estimated by cross-validation. 
Specifically, let $\hat{\lambda}_{\cv}$ denote the estimator of $\lambda$ obtained by $K$-fold cross-validation. 
In Lemma 4.1 of \citet{weinstein2023power} the limit of $\hat{\lambda}_{\cv}$ was derived, under the AMP asymptotic framework, in a setting where $\bX$ is augmented with a knockoffs matrix. 
Adapting this result to the current setting (no knockoffs involved), we conclude immediately the following. 

\begin{proposition}[consequence of Lemma 4.1 in \citet{weinstein2023power}]
\label{prop:lambda-cv}
Under our asymptotic framework, we have $\hat{\lambda}_{\cv} \longrightarrow  \lambda^*_{\cv}$ in probability as $p\to \infty$, where 
\begin{equation}
\label{eq:lambda-cv}
\lambda^*_{\cv}\ := \argmin_\lambda \tau(\lambda ;(K-1) \delta / K). 
\end{equation}
\end{proposition}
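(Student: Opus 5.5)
The plan is to adapt the argument of Lemma 4.1 in \citet{weinstein2023power} by simply removing the knockoff augmentation. The starting observation concerns the training fold in $K$-fold cross-validation. It consists of $n(K-1)/K$ rows of $\bX$, and its entries are i.i.d.\ Gaussian with variance $1/n$. Rescaling the design by $\sqrt{K/(K-1)}$ puts it in the standard AMP normalization $\mathcal{N}(0,1/n_{\text{train}})$. This rescaling maps the penalty $\lambda$ to a fixed multiple of itself, which does not change the argmin over the path after reparametrization. The training problem is then an instance of the setup of Section \ref{sec:AMP-framework}, with the same $\Pi$ and $\sigma^2$ and with sampling ratio $\delta' = (K-1)\delta/K$. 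Theorem 1.5 of \citet{bayati2012lasso} therefore applies to each fold's Lasso fit $\hat\beta^{(-k)}(\lambda)$, with parameters $(\alpha',\tau') = (\alpha,\tau)(\lambda;\delta')$.

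Next, compute the limit of the held-out prediction error. The validation rows $\bX_k$ (about $n/K$ of them) and the noise $\zeta_k$ are independent of $\hat\beta^{(-k)}$. Conditionally on the training data, each residual $Y_{k,j} - \bX_{k,j}^\top\hat\beta^{(-k)}$ is therefore $\mathcal{N}(0,\sigma^2 + \|\hat\beta^{(-k)}-\beta\|_2^2/n)$. A conditional law of large numbers gives
\[
\frac{K}{n}\|Y_k - \bX_k\hat\beta^{(-k)}\|_2^2 \;-\; \Big(\sigma^2 + \tfrac{1}{n}\|\hat\beta^{(-k)}-\beta\|_2^2\Big) \longrightarrow 0 .
\]
The second term converges by Bayati--Montanari with $\psi(x,y)=(x-y)^2$, which is pseudo-Lipschitz. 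Its limit is $\delta^{-1}\EE(\eta_{\alpha'\tau'}(\Pi+\tau' Z)-\Pi)^2$. By \eqref{eq:tau} at ratio $\delta'$, this equals $\frac{\delta'}{\delta}(\tau'^2-\sigma^2)$. Averaging over the $K$ folds, $\mathrm{CV}(\lambda)\to \sigma^2(1-\delta'/\delta) + (\delta'/\delta)\,\tau^2(\lambda;\delta')$. This is a strictly increasing affine function of $\tau^2(\lambda;\delta')$, so the pointwise minimizer of the limit is $\lambda^*_{\cv}$ in \eqref{eq:lambda-cv}.

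Finally, pass from pointwise convergence to convergence of the argmin. This is the step I expect to be the main obstacle. I would first restrict $\lambda$ to a compact set $[\lambda_{\min},\lambda_{\max}]$ containing $\lambda^*_{\cv}$ in its interior, as is done in \citet{weinstein2023power}. Uniform convergence of $\mathrm{CV}(\lambda)$ on this set follows from two facts. First, $\lambda\mapsto\hat\beta(\lambda)$ is Lipschitz on compacts with high probability, because the operator norm of $\bX$ is bounded almost surely. Second, $\lambda\mapsto\tau(\lambda;\delta')$ is continuous. Together these give stochastic equicontinuity, and a finite grid argument then yields $\sup_\lambda|\mathrm{CV}(\lambda)-\lim\mathrm{CV}(\lambda)|\to 0$ in probability. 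With uniqueness of the minimizer of $\tau(\cdot;\delta')$, which is taken from the cited lemma, the standard argmin continuous-mapping argument gives $\hat\lambda_{\cv}\to\lambda^*_{\cv}$ in probability.
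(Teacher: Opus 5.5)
\paragraph{The reduction of the training fold is wrong.} Your outline matches what the paper does implicitly: it gives no argument beyond invoking Lemma 4.1 of \citet{weinstein2023power} with the knockoff columns removed. The steps are the same: treat each training fold as an AMP instance at ratio $\delta'=(K-1)\delta/K$, get the held-out error from independence of the validation rows, note that its limit is increasing in a state-evolution $\tau$, and pass to the argmin. But the reduction step, which is exactly the bookkeeping that ``adaptation'' consists of, is false as you state it.

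Write $\bX_{-k}=cA$ with $c=\sqrt{(K-1)/K}$ and $A$ in standard normalization. Then $Y_{-k}=A(c\beta)+\zeta_{-k}$ and
$\tfrac12\|Y_{-k}-\bX_{-k}b\|_2^2+\lambda\|b\|_1=c^2\bigl[\tfrac12\|Y_{-k}/c-Ab\|_2^2+(\lambda/c^2)\|b\|_1\bigr]$.
So $\hat\beta^{(-k)}(\lambda)$ is the standard-normalized Lasso with prior $\Pi$, noise variance $\tfrac{K}{K-1}\sigma^2$, and penalty $\tfrac{K}{K-1}\lambda$. Equivalently, up to the factor $c$ on the estimate, it has prior $c\Pi$ and noise $\sigma^2$.

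You cannot keep both $\Pi$ and $\sigma^2$. Each training row has signal variance $\|\beta\|_2^2/n$ rather than $\|\beta\|_2^2/n_{\mathrm{train}}$, so its signal-to-noise ratio is $(K-1)/K$ times that of a genuine ratio-$\delta'$ instance.

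A quick check confirms that your formula fails. Take $\delta'>1$ and let $\lambda\downarrow 0$. The fold fit tends to least squares on $n(K-1)/K$ rows, whose held-out error is $\sigma^2\delta'/(\delta'-1)$ regardless of how $\bX$ is scaled. Your limit instead gives $\sigma^2\bigl(1+\tfrac{K-1}{K}\cdot\tfrac{1}{\delta'-1}\bigr)$.

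\paragraph{What the corrected computation gives.} Carrying the factors through your own computation yields $\mathrm{CV}(\lambda)\to\tfrac{K-1}{K}\,\tau_{\mathrm{tr}}^2$. Here $\tau_{\mathrm{tr}}$ solves \eqref{eq:tau}--\eqref{eq:lambda} at ratio $\delta'$, with $\sigma^2$ replaced by $\tfrac{K}{K-1}\sigma^2$ and $\lambda$ replaced by $\tfrac{K}{K-1}\lambda$.

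Monotonicity in $\tau$ survives, so the structure of your argument is fine. However, two things change:
\begin{itemize}
\item the model in which $\tau$ is computed;
\item the units of $\lambda$: the constant factor on the penalty moves the argmin by exactly that factor.
\end{itemize}
The second is not ``harmless after reparametrization'' when the claim is convergence to a specific number $\lambda^*_{\cv}$.

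To get the proposition you must fix the CV convention, meaning how the fold-wise penalty is normalized relative to the full-data $\lambda$. You must then define $\tau(\cdot\,;(K-1)\delta/K)$ for the correct training-fold model. In particular, the corrected limit involves noise $\tfrac{K}{K-1}\sigma^2$, not the $\sigma^2$ in \eqref{eq:cv-amp}, so that identification does not follow from your argument.

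\paragraph{A smaller gap.} Lipschitz continuity of $\lambda\mapsto\hat\beta(\lambda)$ does not follow from an upper bound on the operator norm of $\bX$. Between knots of the path, $\frac{d}{d\lambda}\hat\beta_A=-(\bX_A^\top\bX_A)^{-1}\mathrm{sign}(\hat\beta_A)$. You therefore need a lower bound on the singular values of $\bX$ restricted to the active sets along the path, which requires a separate argument.
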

Additionally, by adapting Equation (4.2) in \citet{weinstein2023power}, we conclude that 
$$
\lambda^*_{cv} = \alpha_\cv \tau_\cv \bigg[ 1-\frac{K}{(K-1)\delta} \PP(|\Pi + \tau_\cv Z|>\alpha_\cv \tau_\cv)\bigg],
$$
where $\alpha_\cv$ and $\tau_\cv$ solve the system given by 
\begin{equation}
\label{eq:cv-amp}
\begin{aligned}
&\tau^2_\cv = \sigma^2 + \frac{K}{(K-1)\delta} \EE[\eta_{\alpha_\cv \tau_\cv}(\Pi + \tau_\cv Z) - \Pi]^2\\
&\EE(Z+\alpha_\cv ; \Pi + \tau_\cv Z < -\alpha_\cv \tau_\cv) - \EE(Z - \alpha_\cv ; \Pi + \tau_\cv Z > \alpha_\cv \tau_\cv)=0. 
\end{aligned}
\end{equation}

\section{Numerical experiments}
\label{sec:numerical}
We now demonstrate the adequacy of the theoretical predictions from the previous section through some numerical studies. 
First we carry out simulations to 
examine the agreement of the empirical tradeoff curves of the oracle and EB procedures with the theoretical predictions. 
In Figure \ref{fig:simulations}, each of the subplots displays empirical tradeoff curves, that is, the true FDP and TPP along the realized selection path, from 17 independent simulation runs under different settings, where we fixed $\lambda=1$. In all subplots the data $(Y,\bX)$ was generated from the two-level model given by \eqref{eq:linear_model} and \eqref{eq:iid-two-groups} with $\sigma = 1$ and $\epsilon=0.1$. 
The left column shows results for {$p=5000, n=2500$} (corresponding to $\delta=0.5$), and the right column shows results for {$p=5000, n=9000$} (corresponding to $\delta=1.8$). 
Each row in the figure corresponds to a different choice of the nonzero component $\Pi_1$: in the first row $\Pi_1$ is Gaussian with mean 3.5 and variance 1; in the second row $\Pi_1$ is a mixture of two Gaussians, one with mean -3.6 and variance 1 and the second with mean 4 and variance 1, 
\begin{figure}[]
    \centering \includegraphics[width=\textwidth]{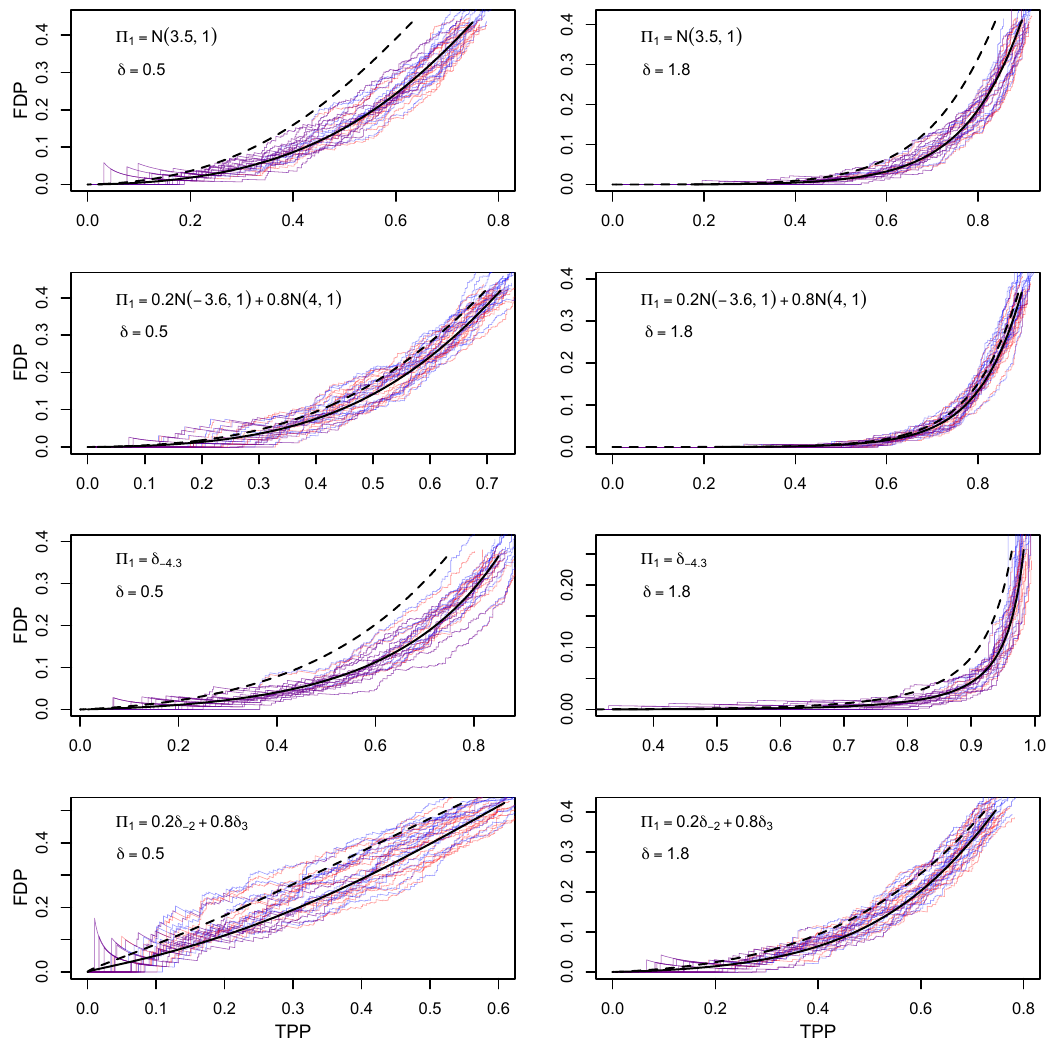}
    \caption{Empirical tradeoff curves vs.~theoretical predictions  for the oracle and the EB procedure. Left column displays results for $\delta=0.5$, right column for $\delta=1.8$. 
    The rows correspond to different choices of $\Pi_1$. 
    Thin lines are realized FDP and TPP from 17 simulation runs, red for the oracle and blue for the EB procedure. Solid black lines represent theoretical predictions $\fdp^*(t), \tpp^*(t)$. Broken black line is the theoretical curve for thresholded-Lasso. 
    Further details are included in the main text. 
    }
    \label{fig:simulations}
\end{figure}
with probabilities 0.2 and 0.8 respectively;  
in the third row $\Pi_1$ is a point mass at -4.3; and in the fourth row it is a mixture of two point mass distributions, one at -2 and the other at 3, with probabilities 0.2 and 0.8, respectively. 
The thin blue curves are the empirical tradeoff curves for the EB procedure, and the thin red curves are for the oracle procedure, which uses the knowledge of $\Pi_1, \epsilon$ and $\sigma$ to rank the variables according to the true (asymptotic) lfdr. 
In each of the subplots the theoretical tradeoff curve, predicting the asymptotic FDP and TPP for the oracle and the EB procedure, is overlaid in black. 
For comparison, the broken black line is the theoretical tradeoff curve for thresholded-Lasso. 
The plots show good agreement between the theory and the empirical results. 
In particular, the empirical curves of the oracle and the EB procedure are difficult to distinguish, which is aligned with the theory that predicts they coincide asymptotically. 

In Figure \ref{fig:lambda-opt} we examine the effect of $\lambda$ on power of the EB selection rule. 
The left panel shows the asymptotic FDP of the EB procedure as a function of $\lambda$, when the asymptotic TPP is fixed at 0.7 (an arbitrary choice), for an example with $\epsilon = 0.1, \delta = 1, \sigma =1$ and $\Pi_1 = .2\calN(-3.6, 1) + .8\calN(4,1)$; 
that is, the graph displays $\fdp^*(t(\lambda); \lambda)$ vs.~$\lambda$, where $t(\lambda)$ is such that $\tpp^*(t(\lambda); \lambda)=0.7$. 
The dotted vertical line indicates the optimal value $\lambda^*$, specified in \eqref{eq:lambda-opt}, and the solid vertical line indicates $\lambda^*_{cv}$ in \eqref{eq:lambda-cv}. 
The picture is consistent with the theory of the previous section, which predicts that $\lambda^*$ minimizes the asymptotic FDP for any fixed TPP level. 
The right panel of the figure presents results from a corresponding simulation, where we examine the convergence predicted in Proposition \ref{prop:lambda-cv} of $\hat{\lambda}_\cv$ to $\lambda^*_{cv}$. 
For $p\in \{200, 2000, 10^4\}$ we set $n=p$ and report the mean and standard error of $\hat{\lambda}_\cv$ based on 1000 independent runs for $p=200, 2000$, and based on 100 runs for $p=10^4$. 
To inspect the asymptotic loss in power expected due to using the EB procedure with $\hat{\lambda}_\cv$ instead of the optimal choice of $\lambda$, 
Figure \ref{fig:fdp-cv_fdp_opt} compares the asymptotic FDP for $\lambda^*_{cv}$ to the asymptotic FDP for $\lambda^*$, when both are calibrated to the same asymptotic TPP level. 
That is, if for any $\lambda$ we let $t_\xi(\lambda)$ be such that $\tpp^*(t_\xi(\lambda); \lambda)=\xi$, then the graphs show $\fdp^*(t_\xi(\lambda^*_{cv}); \lambda^*_{cv})$ versus $\fdp^*(t_\xi(\lambda^*); \lambda^*)$ as the TPP level $\xi$ varies. 
In the figure $\delta=1$, and the different panels correspond to the four different choices of $\Pi_1$ in Figure \ref{fig:simulations}. 
In all panels the points closely follow the identity line, demonstrating that the power loss is negligible asymptotically.

\begin{figure}
\centering

\begin{subfigure}[t]{0.48\textwidth}
  \centering
  \includegraphics[width=.9\linewidth]{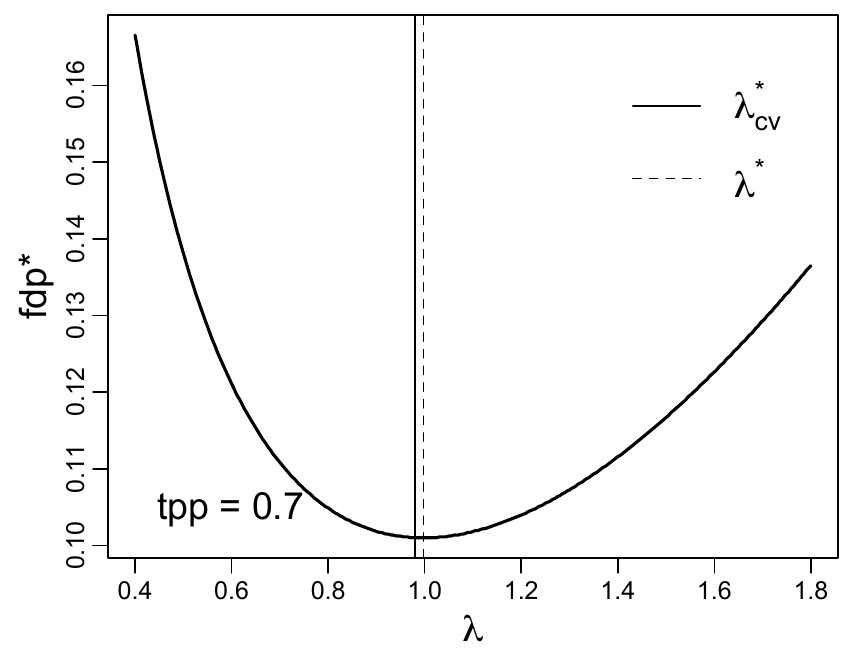}
  \label{fig:panel-a}
\end{subfigure}\hfill
\begin{subfigure}[t]{0.48\textwidth}
  \centering
  \includegraphics[width=.9\linewidth]{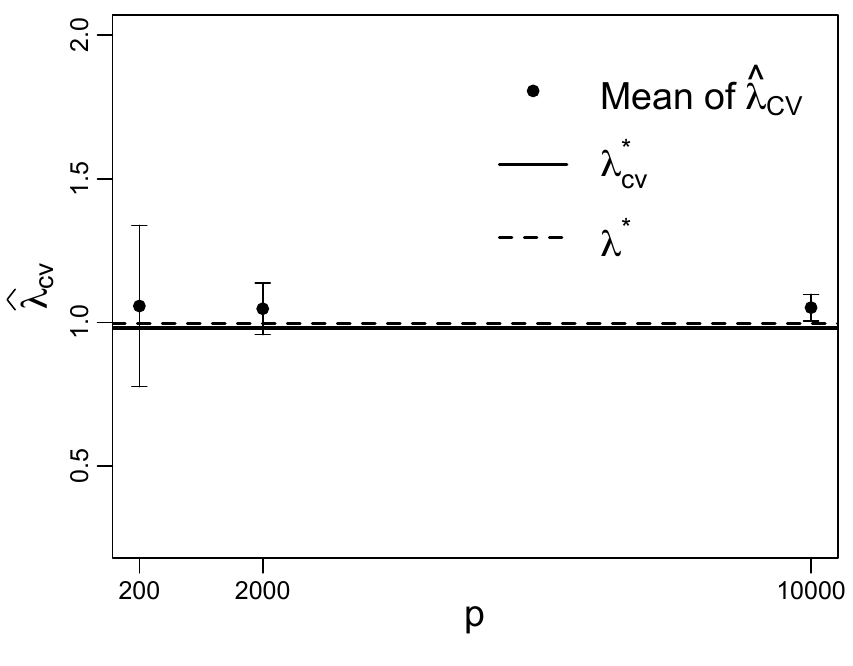}
  \label{fig:panel-b}
\end{subfigure}
\vspace{5pt}
\caption{Left: asymptotic FDP vs.~$\lambda$ when fixing the asymptotic TPP at $0.7$, for a setting with $\epsilon = 0.1, \delta = 1, \sigma =1$ and $\Pi_1 = .2\calN(-3.6, 1) + .8\calN(4,1)$. Right: simulation mean and standard error of $\hat{\lambda}_\cv$ for different $p$.} 
\label{fig:lambda-opt}
\end{figure}

\begin{figure}
    \centering \includegraphics[width=.8\textwidth]{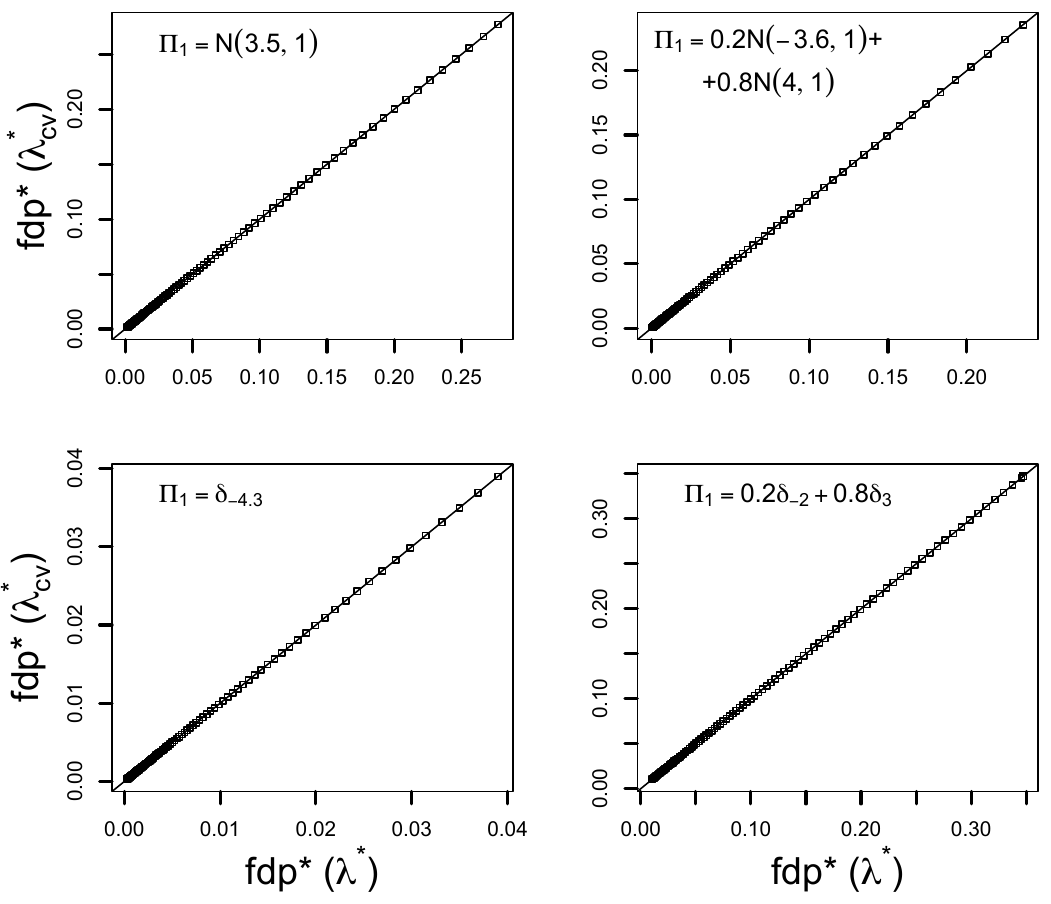}
    \vspace{6pt}
    \caption{Asymptotic FDP of the EB procedure when using $\lambda^*_{cv}$ vs.~the optimal value $\lambda^*$ for $\delta=1$ and four different choices of $\Pi_1$. In each panel, every point corresponds to some fixed value $\xi$ of the (asymptotic) TPP level. The identity line is shown for reference. 
    }
    \label{fig:fdp-cv_fdp_opt}
\end{figure}


\section{Estimating the $\lfdr$: beyond power considerations}
\label{sec:local-fdr}

Up to this point, our interest in estimating the local false discovery rate \eqref{lfdr} has been motivated purely by power considerations, specifically,  by the fact that it emerges as the (oracle) choice of the importance statistic that minimizes asymptotic FDP 
for any fixed asymptotic TPP level over all procedures in the class \eqref{eq:T-h-lambda}. 
We now move beyond power considerations and explain why estimating the lfdr is of interest in its own right. 
The viewpoint adopted here is inspired by the ideas introduced in \cite{soloff2024edge, xiang2025frequentist}. 
At a high level, \cite{soloff2024edge} raise a conceptual objection to using the FDR as a type-I error criterion in multiple testing (under a two-groups model), because the (Bayesian) FDR of a procedure only accounts for the rate of errors incurred on the set of rejections collectively, while a guarantee on the error rate of each individual rejection is arguably more appropriate. 
To be more specific and apply this in our setting, the FDR of a procedure that selects the $i$th variable if $T_i\leq t$ for some statistic $T_i = \varphi(\hat{\beta}_i)$ and a fixed $t$, can be shown, using the towering property of expectation,\footnote{If the pairs $(\hat{\beta}_i, \beta_i)$ were actually i.i.d.~from \eqref{eq:two-groups}, we would have $\operatorname{pFDR}=\FDR/\PP(R>0) = \EE[\lfdr(\beta_i)\lvert T_i\leq t]$ for any finite $p$ \citep{efron2008microarrays}, where $R$ is the total number of rejections; this is not true in our setting, but the asymptotic claim made in the text still holds.} to  asymptotically equal the conditional {\em expectation} of $\lfdr(\beta_i)$ given the rejection event $T_i\leq t$. 
This implies that some rejections---usually those with $T_i$ near the rejection boundary $t$---could have lfdr values substantially exceeding the nominal FDR level (while others could have much smaller lfdr). 
To fix this deficiency, \cite{soloff2024edge} proposed to use instead an error criterion that directly targets the lfdr, for example, to aim for controlling the expectation of the maximum lfdr over the rejections (``max-lfdr"). 
Furthermore, they introduced the {\em Support Line} (SL) procedure, which operates on p-values and controls the max-lfdr in finite sample, assuming i.i.d.~two-groups model and a monotonicity condition on the density of the alternative (a somewhat weaker guarantee holds, still in finite sample, without the monotonicity condition). 
These finite sample results do not formally apply to the more complicated problem of controlled variable selection under the regression model \eqref{eq:linear_model}: even if individual p-values that marginally follow a two-groups model can be calculated, they are generally not independent. 
Finding a procedure in the variable selection problem  that provably has finite sample control of max-lfdr or similar lfdr-based error metrics in fact  seems highly nontrivial. 

That being said, in the scope of the current paper, 
suppose that we have calculated the Lasso estimates $\hat{\beta}_1,...,\hat{\beta}_p$ for some fixed $\lambda$. 
Instead of considering the FDR of a variable selection procedure that ranks the variables according to some function of $\hat{\beta}_i$, we may consider the statistics $\hat{\beta}_i$ themselves, and, without yet making any decision, associate each with the corresponding lfdr \eqref{sec:local-fdr}. 
The lfdr is an unknown quantity, because its calculation requires knowledge of $\Pi$ (and $\sigma$), and in fact it would have the formal meaning as the posterior probability $P(\beta_i = 0 | \hat{\beta}_i)$ only if $(\hat{\beta}_i, \beta_i)$ were actually distributed (marginally) according to \eqref{eq:two-groups}, which, as we remarked before, is not the case here. 
Still, under the special AMP setup, similar calculation as in Section \ref{sec:theoretical} yield asymptotic predictions for the fraction of true nulls among variables with $\hat{\beta}_i \in [s,t]$ for fixed $0<s<t$, 
$$
\FDP\big([s,t]\big) 
:= \frac{\# \{i: s \leq \hat{\beta}_i\leq t, \, \beta_i=0\}}{\# \{i: s \leq \hat{\beta}_i\leq t\}}
\ \longrightarrow \ 
\PP\big(\Pi=0 \big\lvert s \leq \eta_{\alpha\tau}(\Pi + \tau Z)\leq t)\big)
$$
in probability as $p\to \infty$. 
In turn, as the interval length shrinks to zero, the expression on the right hand side converges to the lfdr at $t$, i.e., 
$$
\PP\big(\Pi=0 \big\lvert s \leq \eta_{\alpha\tau}(\Pi + \tau Z)\leq t)\big)
 \longrightarrow 
\PP\big(\Pi=0 \big\lvert \eta_{\alpha\tau}(\Pi + \tau Z) = t)\big)  = \frac{(1 - \epsilon) f_0(\hat{\beta}_i)}{f(\hat{\beta}_i)} = \lfdr(t)
$$
as $s\to t^-$. 
Therefore, we have 
\begin{equation}
\label{eq:FDP-interval}
\lim_{s\to t^-} \lim_{p\to \infty}\FDP\big([s,t]\big) = \lfdr(t), 
\end{equation}
that is, $\lfdr(t)$ is the limiting fraction of true nulls among variables with $\hat{\beta}_i$ in a shrinking  interval around $t$. 
Note that the function $\lfdr(\cdot)$ in \eqref{lfdr} is not claimed to represent the actual posterior probability that $\beta_i=0$ given $\hat{\beta}_i$ (again, this interpretation is only valid if $(\hat{\beta}_i, \beta_i)$ are identically distributed according to \eqref{eq:two-groups}). 
Still, \eqref{eq:FDP-interval} lends the lfdr in \eqref{lfdr} an interpretation under the AMP framework. 
Consistent with our comment at the beginning of Section \ref{sec:AMP-framework}, we note also that \eqref{eq:FDP-interval} is meaningful even when $\beta_i$ are fixed, if the conditions on their empirical distribution are satisfied. 

If we had access to the true function $\lfdr(t)$, this would allow moving  beyond FDR assessment for variable selection procedures, toward a more exploratory form of analysis. 
Indeed, imagine we observed the unlabeled vector $\hat{\beta}$, i.e.~the values $\hat{\beta}_i$ but not their ordering. 
If we highlight a subset of these values---completely {\em post-hoc}---as locations `of potential interest', then \eqref{eq:FDP-interval} suggests that, at any of these locations, the lfdr is roughly the chance that it corresponds to a true null $\beta_i=0$. 
The lfdr is unknown, but we can estimate it, e.g., as
$$
\widehat{\lfdr}(t) = \frac{\widehat{1-\epsilon}\hat{f}_0(t)}{\hat{f}(t)}, \quad\quad \widehat{1-\epsilon} = \frac{N(\lambda)}{p[1-2 \Phi(-\hat{\alpha})]}, 
$$
where $N(\lambda)=\{ i \leq p: \hat{\beta}_i(\lambda)=0 \}$;  $\hat{f}$ and $\hat{f}_0$ are given by \eqref{f_hat} and \eqref{f_0_hat}, respectively; and $\hat{\alpha}$ is obtained from \eqref{tau-alphatau-hat}. 
Under our asymptotic setup and the assumptions in Section \ref{sec:theoretical}, we have 
$$
\begin{aligned}
\lim_{p\to\infty} \frac{N(\lambda)}{p} = \PP\left(|\Pi+\tau Z|\leq \alpha \tau \right) &= (1-\epsilon) \PP(|Z| \leq \alpha)+\epsilon \PP\left(\left|\Pi_1 + \tau Z\right| \leq \alpha \tau\right) \\
&=(1-\epsilon)[1-2 \Phi(-\alpha)] + \epsilon \PP\left(\left|\Pi_1 + \tau Z\right| \leq \alpha \tau\right)
\end{aligned}
$$
and 
\begin{equation}
\label{eq:lfdr-hat-predicted}
\begin{aligned}
\lim_{p\to\infty} \widehat{\lfdr}(t) \ = \ 
\lfdr(t) \ + \ \frac{\epsilon \PP\left(\left|\Pi_1 + \tau Z\right| \leq \alpha \tau\right)}{1-2 \Phi(-\alpha)}
\frac{f_0(t)}{f(t)}, 
\end{aligned}
\end{equation}
which is expected to be only a slight overestimate of $\lfdr(t)$ since $\PP\left(\left|\Pi_1 + \tau Z\right| \leq \alpha \tau\right)$ is typically small. 
For illustration, Figure \ref{fig:lfdr} corresponds to an example with 
$\epsilon = 0.1, \delta = 1, \sigma =1$, $\Pi_1 = .2\calN(-3.6, 1) + .8\calN(4,1)$ and $\lambda=1$. 
The solid curves represent $\lfdr(t)$ (black) and the limit \eqref{eq:lfdr-hat-predicted} of the proposed estimator $\widehat{\lfdr}(t)$ (red); these indeed lie very closely to each other in this example. 
The grey lines in the plot are results from a simulation with $n=p= 14000$, where for each of 17 independent runs we calculated $\FDP\big([t-\Delta,t+\Delta])$, where we use $\Delta = 0.4$,
the realized FDP in a small neighborhood of $t$, for a grid of values of $t$. 
The theoretical predictions align closely with the empirical results; in particular, they capture the differing behavior in the left and right shoulders. 

\begin{figure}[]
\centering
\includegraphics[width=0.65\linewidth]{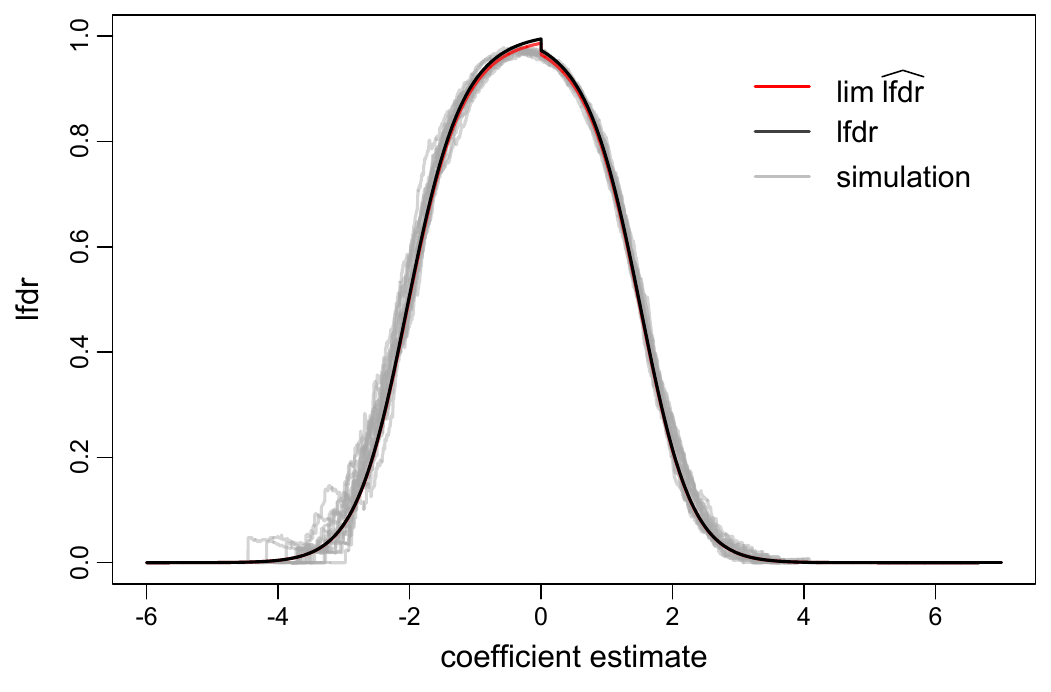}
\vspace{4pt}
\caption{Estimation of the local FDR in an example with $\epsilon = 0.1, \delta = 1, \sigma =1$, $\Pi_1 = .2\calN(-3.6, 1) + .8\calN(4,1)$, and $\lambda=1$ used in calculating the Lasso estimates. 
Solid curves are theoretical asymptotic predictions of the true lfdr and its estimate. 
Grey lines represent realized FDP in a small moving window from 17 independent simulation runs with n=p= 14000, as described in the main text. 
}
\label{fig:lfdr}
\end{figure}

\section{Discussion}
\label{sec:discussion}

We conclude with comments on a few aspects not addressed in the current work. 
Concerning our definition of the oracle, note that 
Theorem~\ref{thm:phi-optimal} asserts optimality over $\varphi$ for a fixed $\lambda$, even though we later identify also the optimal value of $\lambda$ (Theorem \ref{thm:optimal-lambda}). 
The reason we present it this way is that for a fixed $\lambda$ we are able to show 
that the performance of the oracle is asymptotically attainable by a legitimate procedure, namely our EB selection rule. 
A more ambitious result would say that the EB procedure equipped with $\hat{\lambda}_\cv$ (instead of a fixed $\lambda$) asymptotically attains the performance of the oracle that uses $\lambda^*_{cv}$, which by \eqref{eq:lambda-opt} and the numerical examples of Section \ref{sec:numerical} is expected to perform almost as good as using $\lambda^*$. 
While convergence of the EB procedure at $\hat{\lambda}_\cv$ does appear to hold in our simulations, to prove it formally calls for establishing uniform convergence of $\hat{q}_0/\hat{q}$ in $\lambda$; 
in our approach, this requires extending the results for the KDE $\hat{q}$ to uniform convergence in $\lambda$, as well as proving uniform convergence of the estimators $\widehat{\alpha\tau}$ and $\hat{\tau}$ in $\lambda$. 
Such an analysis requires substantial additional technical work, and is therefore not included in the present paper.  

Second, returning to a point discussed in the opening section, we remark  that in practice, the statistician will usually be interested in supplementing the choice of the importance statistic with a choice of a data-dependent threshold $\hat{t}$ that calibrates (or approximately calibrates) selection for type I errors. 
For example, \cite{weinstein2017power} used a simple implementation of knockoffs that in our special setting ($\bX$ has i.i.d.~entries) controls the FDR for any $n,p$; a similar knockoffs analog can be considered for the proposed EB procedure. 
In that case the comparisons to the oracle in the current paper need to be adjusted not only for the fact that the oracle operates on the original $\bX$ whereas the knockoffs will operate on an augmented matrix, but also for potential conservativeness of the knockoffs procedure in choosing the data-dependent threshold. 
More specifically, 
assuming variables with large $T_i$ are selected, in the strategy of \cite{weinstein2017power} the data driven threshold $\hat{t}_q$ is the most liberal $t$ such that $\widehat{\FDP}(t)\leq q$, for a (knockoffs) estimate $\widehat{\FDP}(t)$ of $\FDP(t)$. 
Since $\widehat{\FDP}(t)$ is upwardly biased, $\hat{t}_q$ will generally converge to a value $t_q$ for which $\FDP(t_q)$, the actual FDP, is slightly smaller than the nominal level $q$. This itself generally results in some loss of power, which needs to be accounted for in an honest comparison to the oracle.  
%
Still, the difference is expected to be small asymptotically, especially if $\widehat{\FDP}(t)$ incorporates a reasonably good estimate of $\epsilon$.

\section*{Acknowledgements} 

This work was supported by the Israel Science Foundation under grant no.~2679/24.

\bibliographystyle{abbrvnat}
\bibliography{references}

\bigskip

\appendix

\section{Proofs}
\label{Appendix}

\newcommand{\manuallemma}[2]{%
  \par\noindent\textbf{Lemma #1.}%
   \addcontentsline{toc}{subsection}{Lemma #1}%
  \hypertarget{#2}{}%
}

\newcommand{\manualref}[1]{Lemma~\hyperlink{#1}{#1}}
\newcommand{\manualprop}[2]{%
  \par\noindent\textbf{Proposition #1.}%
  \addcontentsline{toc}{subsection}{Proposition #1}%
  \hypertarget{#2}{}%
}

\newcommand{\manualpropref}[1]{Proposition~\hyperlink{#1}{#1}}


\smallskip
\noindent Before presenting the proof of Lemma~\ref{lem:lina-1.1}, we give another auxiliary result. 
The following is a slightly modified version of Lemma 2 from  \cite{bogdan2013supplement}, suitable for the current setting.

\begin{lemma}
\label{lem:lina-1.2}
Suppose \( Y \) is a \( p \)-dimensional vector distributed as \( \mathcal{N}(\mu, \Sigma) \), where \( \Sigma \geq \sigma^2 I \).  
For any \( \epsilon \in (0,1) \), there exists a constant \( c = c(\epsilon) > 0 \) such that for any \( h > 0 \),
\[
\mathbb{P}(\text{at least } \epsilon p \text{ components of } Y \text{ are in } (b, b+h)) \leq \left( \min \left( 1, c h^\epsilon \sigma^{-\epsilon} \right) \right)^p.
\]
\end{lemma}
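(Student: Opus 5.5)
The plan is a small-ball estimate in the spirit of Lemma 2 of \cite{bogdan2013supplement}: reduce to conditionally independent Gaussians, bound the probability that a fixed set of $k=\lceil \epsilon p\rceil$ coordinates lands in the slab, and take a union bound over sets.

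\textbf{Decomposition.} Since $\Sigma \geq \sigma^2 I$, write $\Sigma = \sigma^2 I + (\Sigma - \sigma^2 I)$ with the second term positive semidefinite. Then $Y \overset{d}{=} \mu + W + \sigma G$, where $W\sim\mathcal{N}(0,\Sigma-\sigma^2 I)$ and $G\sim\mathcal{N}_p(0,I)$ are independent. Conditional on $W$, the coordinates $Y_i$ are independent with $Y_i \sim \mathcal{N}(\mu_i+W_i,\sigma^2)$. Each such coordinate falls in an interval of length $h$ with probability at most $h/(\sigma\sqrt{2\pi}) \le h/\sigma$, because the $\mathcal{N}(\cdot,\sigma^2)$ density is bounded by $1/(\sigma\sqrt{2\pi})$ regardless of the mean.

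\textbf{Union bound.} Fix a subset $A$ of size $k$. Conditional independence gives
\[
\PP\big(Y_i\in(b,b+h)\ \forall i\in A \,\big|\, W\big)\le (h/\sigma)^{k}.
\]
Integrating out $W$ and summing over $A$,
\[
\PP(\text{at least } \epsilon p \text{ components in } (b,b+h)) \le \binom{p}{k}(h/\sigma)^{k}.
\]
By the entropy bound $\binom{p}{k}\le e^{pH(\epsilon')}$, where $\epsilon'=k/p\ge\epsilon$ and $H$ is binary entropy in nats, we have $\binom{p}{k}\le 2^p$. Hence, when $h/\sigma\le 1$,
\[
\binom{p}{k}(h/\sigma)^{k}\le 2^p (h/\sigma)^{\epsilon p} = \big(2 (h/\sigma)^{\epsilon}\big)^p,
\]
using $k\ge \epsilon p$. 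When $h/\sigma> 1$, the trivial bound $1$ suffices. So $c=2$ works, and the bound is $\big(\min(1,c h^\epsilon\sigma^{-\epsilon})\big)^p$. A sharper constant $c(\epsilon)=e^{H(\epsilon)/\epsilon}$ is available, but it is not needed.

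\textbf{Main care point.} The only delicate step is the rounding $k=\lceil\epsilon p\rceil$ versus $\epsilon p$. Since $h/\sigma\le 1$, the factor $(h/\sigma)^k$ only decreases when $k$ increases, so replacing $k$ by $\epsilon p$ is legitimate. The dependence among the $Y_i$ is fully handled by conditioning on $W$, which is where $\Sigma\ge\sigma^2 I$ is used.
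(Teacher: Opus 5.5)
Your proof is correct, and it handles the dependence among the $Y_i$ differently from the paper.

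\emph{The paper's route.} It invokes Lemma 1 of \citet{bogdan2013supplement}: the conditional variance of $Y_i$ given $Y_{(-i)}$ is at least $\sigma^2$. This gives $\PP(Y_i\in(b,b+h)\mid Y_{(-i)})\le\tilde p:=\min(1,h/(\sqrt{2\pi}\sigma))$. It then replaces the indicators $\mathbf{1}\{Y_i\in(b,b+h)\}$ one at a time by i.i.d.\ Bernoulli$(\tilde p)$ variables, arguing that each replacement can only increase the probability that the count reaches $\epsilon p$. It finishes with the binomial tail bound $\sum_{i\ge\epsilon p}\binom{p}{i}\tilde p^{\,i}\le\tilde p^{\,\epsilon p}2^p$.

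\emph{Your route.} You split off an independent isotropic component $\sigma G$ using $\Sigma-\sigma^2 I\succeq 0$, which makes the coordinates exactly independent given $W$. A union bound over $\lceil\epsilon p\rceil$-subsets then plays the role of the binomial tail, and the final count $2^p\tilde p^{\,\epsilon p}$ is the same.

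\emph{Comparison.} Your argument is self-contained and avoids the sequential stochastic-domination step, whose details the paper only sketches (one must condition on the remaining indicators and the already-introduced Bernoullis rather than on $Y_{(-i)}$). The paper's argument, in exchange, uses only a lower bound on each conditional variance $1/(\Sigma^{-1})_{ii}$. That condition is implied by, but weaker than, $\Sigma\succeq\sigma^2 I$. It also extends verbatim to non-Gaussian vectors whose coordinates have conditional densities bounded by $1/(\sqrt{2\pi}\sigma)$. Your decomposition needs both Gaussianity and the full Loewner ordering. For the lemma as stated, both suffice.

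One small aside: your remark about a sharper constant is off. In the form $c\,h^\epsilon\sigma^{-\epsilon}$, the entropy bound gives roughly $c=e^{H(\epsilon)}\le 2$. By contrast, $e^{H(\epsilon)/\epsilon}$ exceeds $2$ for most $\epsilon$. Since you do not use this remark, it does not affect the proof.
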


\begin{proof}[Proof of Lemma \ref{lem:lina-1.2}]
    By Lemma 1 in \citet{bogdan2013supplement}, the variance of $Y_i \mid Y_{(-i)}$, where the subscript $(-i)$ denotes all components except the $i$th, is larger than or equal to 
$\sigma^2$. So we have
\[
\mathbb{P}(Y_i \in (b, b+h) \mid Y_{(-i)}) \leq \min\left(1, \frac{h}{\sqrt{2\pi}\sigma} \right) \quad \text{a.s.}
\]
since the normal density function is bounded by $1/\sqrt{2\pi}$.
Denote by $\xi_i$ i.i.d. Bernoulli random variables independent of $Y$ with 
\[
\mathbb{P}(\xi_i = 1) = \tilde{p} = \min\left(1, \frac{h}{\sqrt{2\pi}\sigma} \right).
\]
Define $\zeta_i = \mathbf{1}(Y_i \in (b, b+h))$.  
Since 
\[
\mathbb{P}(\zeta_1 = 1 \mid \zeta_2, \ldots, \zeta_p) \overset{\text{a.s}}{\leq} \mathbb{P}(\xi_1 = 1 \mid \zeta_2, \ldots, \zeta_p),
\]
we obtain:
\[
\mathbb{P}(\zeta_1 + \zeta_2 + \cdots + \zeta_p \geq \varepsilon p) \leq \mathbb{P}(\xi_1 + \zeta_2 + \cdots + \zeta_p \geq \varepsilon p).
\]
Using a similar argument, we can conclude that:
\[
\ \ \ \ \ \ \ \  \mathbb{P}(\xi_1 + \xi_2 + \cdots + \xi_k + \zeta_{k+1} + \cdots + \zeta_p \geq \varepsilon p)
\leq 
\mathbb{P}(\xi_1 + \xi_2 + \cdots + \xi_{k+1} + \zeta_{k+2} + \cdots + \zeta_p \geq \varepsilon p)
\]
for $k = 1, \ldots, p-1$. 
Therefore, 
\begin{align*}
\mathbb{P}(\text{at least } \varepsilon p \text{ components of } Y \text{ are in } (b, b+h)) 
&= \mathbb{P}(\zeta_1 + \cdots + \zeta_p \geq \varepsilon p) \\
&\leq \mathbb{P}(\xi_1 + \zeta_2 + \cdots + \zeta_p \geq \varepsilon p) \\
&\leq \mathbb{P}(\xi_1 + \xi_2 + \zeta_3 + \cdots + \zeta_p \geq \varepsilon p) \\
&\quad \vdots \\
&\leq \mathbb{P}(\xi_1 + \xi_2 + \cdots + \xi_p \geq \varepsilon p) \\
&= \mathbb{P}\left( \sum_{i=1}^p \xi_i \geq \varepsilon p \right).
\end{align*}
Hence,
$$
\mathbb{P}\left( \sum_{i=1}^p \xi_i \geq \varepsilon p \right) 
\leq \sum_{i \geq \varepsilon p} \binom{p}{i} \tilde{p}^i 
\leq \tilde{p}^{\varepsilon p} \sum_{i \geq \varepsilon p} \binom{p}{i} 
\leq \tilde{p}^{\varepsilon p} \cdot 2^p\leq \left( \frac{2^{1+\varepsilon/2}}{\pi^{\varepsilon/2}} \right)^p h^{\varepsilon p} \sigma^{-\varepsilon p}.
$$
\end{proof}

\begin{lemma}
\label{lem:lina-1.3}
\[
\lim_{p \to \infty} \frac{R^*(t; \lambda)}{p}
= \lim_{p \to \infty} \sum_{m=1}^M \left( \frac{1}{p} \sum_{i=1}^p t_{R_m}(\hat{\beta}_i, \beta_i) \right)
= \sum_{m=1}^M \mathbb{E} \left[ t_{R_m} \left( \eta_{\alpha \tau}(\Pi + \tau Z), \Pi \right) \right].
\]
\end{lemma}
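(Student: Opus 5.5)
Since $M$ is finite, it is enough to prove, for each fixed $m$,
\[
\frac{1}{p}\sum_{i=1}^p \mathbf{1}\{\hat{\beta}_i\in(a_m,b_m)\} \longrightarrow \PP\big(\eta_{\alpha\tau}(\Pi+\tau Z)\in(a_m,b_m)\big),
\]
and then sum over $m$. The indicator is not pseudo-Lipschitz, so Theorem 1.5 of \citet{bayati2012lasso} cannot be applied directly. We sandwich it between Lipschitz functions, following the approach of \cite{su2017false}.

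Fix $m$ and suppose $\Delta<a_m<b_m<\infty$. The cases $a_m<b_m<-\Delta$ and $b_m=\infty$ are handled identically, with a one-sided ramp. For small $\gamma>0$ we define two piecewise-linear functions of $x$ only (constant in $y$):
\begin{itemize}
\item the lower function $\ell_\gamma$ equals $1$ on $[a_m+\gamma,b_m-\gamma]$, vanishes outside $(a_m,b_m)$, and is linear in between;
\item the upper function $u_\gamma$ equals $1$ on $[a_m,b_m]$, vanishes outside $(a_m-\gamma,b_m+\gamma)$, and is linear in between.
\end{itemize}
Both are bounded and Lipschitz, hence pseudo-Lipschitz, and $\ell_\gamma\le \mathbf{1}_{(a_m,b_m)}\le u_\gamma$. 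Applying Theorem 1.5 to each gives almost surely
\[
\EE\,\ell_\gamma(\eta_{\alpha\tau}(\Pi+\tau Z))\le \liminf_p \frac1p\sum_i t_{R_m}(\hat\beta_i,\beta_i)\le\limsup_p \frac1p\sum_i t_{R_m}(\hat\beta_i,\beta_i)\le \EE\, u_\gamma(\eta_{\alpha\tau}(\Pi+\tau Z)).
\]

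It then remains to let $\gamma\to0$. The gap between the two bounds is at most
\[
\PP\big(\eta_{\alpha\tau}(\Pi+\tau Z)\in(a_m-\gamma,a_m+\gamma)\cup(b_m-\gamma,b_m+\gamma)\big).
\]
Because $\gamma<\Delta$ and the intervals are separated from zero by $\Delta$ (Assumption \ref{assmptn:1}), this event involves only the nonzero part of the law. There $\eta_{\alpha\tau}(\Pi+\tau Z)=\Pi+\tau Z\mp\alpha\tau$, which has a density bounded by $1/(\sqrt{2\pi}\tau)$ since it is a Gaussian convolution. The gap is therefore at most $4\gamma/(\sqrt{2\pi}\tau)\to0$. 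The atom at $0$ causes no trouble, because $0\notin[a_m-\gamma,b_m+\gamma]$.

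The main obstacle is whether any subtlety remains beyond this boundary-mass control. The sandwich needs Theorem 1.5 only for a countable sequence $\gamma_k\to0$, so almost-sure (hence in-probability) convergence follows. The point needing care is that Theorem 1.5 concerns the limiting law, while the finite-$p$ estimates could pile up near the endpoints. The sandwich handles this automatically, since it bounds the empirical averages from both sides through limits of pseudo-Lipschitz averages. Lemma \ref{lem:lina-1.2} is thus not needed for this step; it would be needed only if the endpoints were data-dependent, as in the EB analysis. Summing over $m=1,\dots,M$ then gives the claim.
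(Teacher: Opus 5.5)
Your proof is correct, but it takes a different and more elementary route than the paper's.

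\textbf{What the paper does.} It uses only a one-sided ramp $t_{R_{m,h}}$, equal to $1$ on $(a_m,b_m)$ and vanishing outside $(a_m-h,b_m+h)$, and applies Theorem 1.5 to it. It then has to control the finite-$p$ discrepancy $\frac1p\sum_i |t_{R_{m,h}}-t_{R_m}|$, i.e.\ the empirical fraction of $\hat\beta_i$ falling in the strips $(b_m,b_m+h)$ and $(a_m-h,a_m)$. For this it imports the anti-concentration machinery of Bogdan et al.:
\begin{itemize}
\item the KKT representation of $\hat\beta_A$ as a Gaussian vector given the active set and sign pattern;
\item a union bound over all pairs $(D,\tilde\lambda)$;
\item the fact that $|A|\ge \rho p$ with high probability;
\item Lemma~\ref{lem:lina-1.2}.
\end{itemize}

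\textbf{What your route does.} Your two-sided sandwich makes all of this unnecessary. The endpoints $a_m,b_m$ are fixed and bounded away from the atom of $\eta_{\alpha\tau}(\Pi+\tau Z)$ at zero, so the limiting law puts no mass there. Your density bound $1/(\sqrt{2\pi}\tau)$ is correct, though the absence of atoms at $a_m,b_m$ would already suffice. Hence the gap between the limits of the lower and upper Lipschitz averages vanishes as $\gamma\to 0$. The same device, dominating $\mathbf{1}\{b_m<x<b_m+h\}$ by a tent function supported on $(b_m-h,b_m+2h)$, would in fact prove directly the strip bound that the paper derives via Lemma~\ref{lem:lina-1.2}.

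\textbf{What each buys.} Your argument is self-contained, relies only on Theorem 1.5, and gives almost-sure rather than in-probability convergence. The paper's argument yields a finite-sample bound on the number of active estimates in a thin interval. That bound does not pass through the limiting law and does not depend on where the interval sits. This is exactly what is needed near zero, as in the proof of Lemma~\ref{lem:lina-2.1.1}. There a pure sandwich fails, because any tent around the origin also captures the non-vanishing fraction of exactly-zero estimates. For the present lemma, however, that machinery is more than is required.
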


\begin{proof}[Proof of Lemma \ref{lem:lina-1.3}]
Note that $t_{R_m}(x,y)$ is not a pseudo-Lipschitz function, so we cannot apply Theorem 1.5 form 
\citet{bayati2012lasso}. 
To bypass the discontinuity of $t_{R_m}(x,y)$, define the  function 
\[ 
t_{R_{m,h}}(x,y) =
\begin{cases} 
1, & a_m(t) < x < b_m(t) \\ 
0, & x \geq b_m(t) \text{ or } x \leq a_m(t) \\ 
1 - \frac{x - b_m(t)}{h}, & b_m(t) < x < b_m(t)+h \\ 
1 + \frac{x - a_m(t)}{h}, & a_m(t)-h < x < a_m(t)
\end{cases},  
\]
which is pseudo-Lipschitz. 
It is clear that $t_{R_{m,h}}(x,y) \to t_{R_m}(x,y)$ as $h \to 0$, for fixed $x,y$. 
By Theorem 1.5 of 
\citet{bayati2012lasso}, we have 
\[
\lim_{p \to \infty} \frac{1}{p} \sum_{m=1}^M \sum_{i=1}^p t_{R_{m,h}}(\hat{\beta}_i, \beta_i) \xrightarrow{P} \sum_{m=1}^M \mathbb{E}\left[ t_{R_{m,h}}(\eta_{\alpha \tau}(\Pi + \tau Z), \Pi) \right].  
\]
Now, for all $\epsilon > 0$,
\[
\begin{aligned}
\mathbb{P}\left( \left| \frac{1}{p} \sum_{i=1}^p t_{R_{m,h}}(\hat{\beta}_i, \beta_i) - \frac{1}{p} \sum_{i=1}^p t_{R_m}(\hat{\beta}_i, \beta_i) \right| > \epsilon \right) &\leq \\ 
\mathbb{P}\left( \frac{1}{p} \sum_{i=1}^p \left| t_{R_{m,h}}(\hat{\beta}_i, \beta_i) - t_{R_m}(\hat{\beta}_i, \beta_i) \right| > \epsilon \right) &\leq \\ 
\mathbb{P}\left( \frac{1}{p} \sum_{i=1}^p \bm{1} \{b_m < \hat{\beta}_i < b_m + h\} > \epsilon \right) &+ \mathbb{P}\left(\frac{1}{p} \sum_{i=1}^p \bm{1} \{a_m - h < \hat{\beta}_i < a_m\} > \epsilon \right)
\end{aligned}
\]
If we show that:
\begin{equation} \label{proof lamma 1.3 1}
\lim_{h \to 0} \lim_{p \to \infty} \sup \mathbb{P}\left( \frac{1}{p} \sum_{i=1}^p \bm{1} \{b_m < \hat{\beta}_i < b_m + h\} > \epsilon \right) = 0
\end{equation} 
and
\begin{equation} \label{proof lamma 1.3 2}
\lim_{h \to 0} \lim_{p \to \infty} \sup \mathbb{P}\left( \frac{1}{p} \sum_{i=1}^p \bm{1} \{a_m - h < \hat{\beta}_i < a_m\} > \epsilon \right) = 0,  
\end{equation}
we get the following by invoking the dominated convergence theorem,  
\[ 
\begin{aligned}
\lim_{p \to \infty} \frac{1}{p} \sum_{m=1}^M \sum_{i=1}^p t_{R_m}(\hat{\beta}_i, \beta_i) &\xrightarrow{P} \sum_{m=1}^M \lim_{h \to 0} \mathbb{E}\left[ t_{R_{m,h}}(\eta_{\alpha \tau}(\Pi + \tau Z), \Pi) \right] \\
&= \sum_{m=1}^M \mathbb{E}\left[ t_{R_m}(\eta_{\alpha \tau}(\Pi + \tau Z), \Pi) \right]. 
\end{aligned}
\]
To prove (\ref{proof lamma 1.3 1}), let $A$ be the active set of the Lasso solution. 
From the KKT conditions, 
\[ 
\boldsymbol{X}_A^T(Y - \boldsymbol{X}_A \hat{\beta}_A) = \tilde{\lambda}_A \quad \text{with } \tilde{\lambda}_A = \lambda \cdot \text{sign}(\hat{\beta}_A). 
\]
Since $|A| \leq n$, $\boldsymbol{X}_A^T \boldsymbol{X}_A$ is invertible, and thus
\[
\hat{\beta}_A = (\boldsymbol{X}_A^T\boldsymbol{X}_A)^{-1}(\boldsymbol{X}_A^T Y - \tilde{\lambda}_A). 
\]
Now, for any subset $D \subset \{1, 2, \dots, p\}$ with $|D| \leq n$, and any $\tilde{\lambda}$ of length $|D|$ with each component $\pm \lambda$, define
\[
\hat{\beta}_D^{\tilde{\lambda}} = (\boldsymbol{X}_D^T \boldsymbol{X}_D)^{-1}(\boldsymbol{X}_D^T \boldsymbol{X}\beta - \tilde{\lambda}) + (\boldsymbol{X}_D^T \boldsymbol{X}_D)^{-1} \boldsymbol{X}_D^T \zeta. 
\]
According to Lemma 3 of \citet{bogdan2013supplement}
, there exists $\rho > 0$ such that $\mathbb{P}(|A| < \rho p) = o(1)$. 
For any $\epsilon', \epsilon > 0$, for large $n, p$, we have 
\[
\mathbb{P}\left(\frac{1}{p} \sum_{i=1}^p \bm{1}\{b_m < \hat{\beta}_i < b_m + h \} > \epsilon \right)
\leq 
\sum_{|D| = \rho p}^{{min}(p,n)} 
\sum_{|\tilde{\lambda}| = |D|} \Bigg[\]
\[
\quad \mathbb{P}\left( \frac{1}{p} \sum_{i=1}^{|D|} \bm{1}\{b_m < \hat{\beta}_D^{\tilde{\lambda}}(i) < b_m + h\},\ \sigma_{\max}(\boldsymbol{X}) < 1 + \\
\delta^{-1/2} + \epsilon' \right)
+\ o(1)\ +\ \mathbb{P}(|A| < \rho p) \Bigg]
\]
\smallskip
Since $\hat{\beta}_D^{\tilde{\lambda}}$ is multivariate normal with bounded covariance under these conditions, applying Lemma~\ref{lem:lina-1.2} gives
\[
\mathbb{P}\left(\frac{1}{p} \sum_{i=1}^{|D|} \mathbf{1} \{b_m < \hat{\beta}_D^{\tilde{\lambda}}(i) < b_m + h \} > \epsilon \right) \leq (c h^\epsilon \sigma^{-\epsilon})^{\rho p}. 
\]
Hence, 
\[
\mathbb{P}\left(\frac{1}{p} \sum_{i=1}^p \bm{1}\{b_m < \hat{\beta}_i < b_m + h\}> \epsilon\right) \leq (4c^\rho h^{\rho \epsilon} \sigma^{-\rho \epsilon})^p + o(1) + o(1). 
\]
This implies
\[
\lim_{h \to 0} \lim_{p \to \infty} \sup \mathbb{P}\left( \frac{1}{p} \sum_{i=1}^p \bm{1}\{b_m < \hat{\beta}_i < b_m + h\} > \epsilon \right) = 0
\]
Similarly, we can prove \eqref{proof lamma 1.3 2}, which leads to 
\[
\lim_{p \to \infty} \frac{R^*(t; \lambda)}{p} = \sum_{m=1}^M \mathbb{E}\left[ t_{R_m}(\eta_{\alpha \tau}(\Pi + \tau Z), \Pi) \right].
\]
\end{proof}

\smallskip
\begin{lemma}
\label{lem:lina-1.4}
    \[
\lim_{p \to \infty} \frac{V^*(t; \lambda)}{p} = \lim_{p \to \infty} \sum_{m=1}^M \left(\frac{1}{p} \sum_{i=1}^{p} t_{V_m}(\hat{\beta}_i, \beta_i)\right) \xrightarrow{P} \sum_{m=1}^M \mathbb{E}[t_{V_m}(\eta_{\alpha \tau}(\Pi + \tau Z), \Pi)].
\]
\end{lemma}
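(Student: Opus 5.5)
The argument follows the proof of Lemma~\ref{lem:lina-1.3} essentially verbatim; the only new ingredient is the factor $\mathbf{1}\{y=0\}$ in $t_{V_m}$.

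\textbf{Smoothing.} Since $\mathbf{1}\{y=0\}$ is not Lipschitz in $y$, $t_{V_m}$ is not pseudo-Lipschitz even after smoothing in $x$, so I will smooth in both arguments. For $h>0$ define
\[
t_{V_{m,h}}(x,y) = t_{R_{m,h}}(x,y)\,\big(1-|y|/h\big)_+ ,
\]
with $t_{R_{m,h}}$ the trapezoidal function from the proof of Lemma~\ref{lem:lina-1.3}. This is a product of two bounded Lipschitz functions, hence bounded and Lipschitz, hence pseudo-Lipschitz. Theorem 1.5 of \citet{bayati2012lasso} then gives, for each fixed $h$,
\[
\frac1p\sum_i t_{V_{m,h}}(\hat\beta_i,\beta_i)\to \EE\, t_{V_{m,h}}(\eta_{\alpha\tau}(\Pi+\tau Z),\Pi) \quad \text{a.s.}
\]

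\textbf{Approximation error.} I bound $|t_{V_{m,h}}-t_{V_m}|$ by the sum of two terms:
\[
|t_{V_{m,h}}-t_{V_m}| \le \mathbf{1}\{0<|\beta_i|<h\} \;+\; \mathbf{1}\{\beta_i=0\}\,|t_{R_{m,h}}-t_{R_m}|(\hat\beta_i).
\]
\emph{First term.} Its empirical average is $\frac1p\#\{i:0<|\beta_i|<h\}$. Since the $\beta_i$ are i.i.d.\ from $\Pi$, the strong law of large numbers gives convergence to $\epsilon\,\PP(0<|\Pi_1|<h)$. This tends to $0$ as $h\to0$ because $\PP(\Pi_1=0)=0$. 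In the frequentist setting, the same conclusion follows from weak convergence of the empirical distribution together with the Portmanteau theorem applied to closed intervals.

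\emph{Second term.} It is at most $\mathbf{1}\{b_m<\hat\beta_i<b_m+h\}+\mathbf{1}\{a_m-h<\hat\beta_i<a_m\}$. These are exactly the boundary-band counts already shown in \eqref{proof lamma 1.3 1}–\eqref{proof lamma 1.3 2} to satisfy
\[
\lim_{h\to0}\limsup_p \PP(\cdot>\epsilon)=0 .
\]
Those bounds come from the KKT representation, Lemma~\ref{lem:lina-1.2}, and the union bound over supports $D$ and sign patterns. I will reuse them directly; dropping the factor $\mathbf{1}\{\beta_i=0\}$ only enlarges the count.

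\textbf{Limit in $h$.} As $h\to0$, $t_{V_{m,h}}\to t_{V_m}$ pointwise, because $(1-|y|/h)_+\to\mathbf{1}\{y=0\}$ and $t_{R_{m,h}}\to t_{R_m}$. Both functions are bounded by $1$, so dominated convergence gives
\[
\EE\, t_{V_{m,h}}(\eta_{\alpha\tau}(\Pi+\tau Z),\Pi)\to \EE\, t_{V_m}(\eta_{\alpha\tau}(\Pi+\tau Z),\Pi).
\]
Combining this with the smoothing step and the approximation-error bounds in a standard $\epsilon/3$ argument, then summing over the finitely many $m$, yields the claim.

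\textbf{Main difficulty.} The only step needing care beyond Lemma~\ref{lem:lina-1.3} is the mass of $\Pi$ near zero, and it is handled by $\PP(\Pi_1=0)=0$. The anti-concentration step is the genuinely hard one, but it is inherited unchanged from Lemma~\ref{lem:lina-1.3}.
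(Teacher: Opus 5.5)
Your proposal is correct and is the argument the paper intends. The paper's proof reads only ``similar to Lemma~\ref{lem:lina-1.3}'', and your additional smoothing in $y$ via $(1-|y|/h)_+$, with the extra error term $\frac1p\#\{i:0<|\beta_i|<h\}$ controlled by the strong law, supplies the step that this glosses over: $t_{R_{m,h}}(x)\mathbf{1}\{y=0\}$ is not pseudo-Lipschitz.

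One caveat concerns only your side remark on the frequentist setting. There, Portmanteau on closed intervals bounds $\limsup_p \frac1p\#\{i:|\beta_i|\le h\}$ but gives no lower bound on the fraction of exact zeros. So you must additionally assume $|\calH_0|/p\to 1-\epsilon$. Without it the lemma can fail: for example, setting the null coordinates to $1/p$ makes $V^*\equiv 0$, while the claimed limit $(1-\epsilon)\PP\bigl(\eta_{\alpha\tau}(\tau Z)\in\bigcup_m(a_m,b_m)\bigr)$ is generally positive.
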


\begin{proof}[Proof of Lemma~\ref{lem:lina-1.4}]
The proof is similar to the proof of Lemma~\ref{lem:lina-1.3}
\end{proof}

\begin{proof}[Proof of Lemma~\ref{lem:lina-1.1}]
From Lemmas~\ref{lem:lina-1.3}
and \ref{lem:lina-1.4} 
we get
\[
\text{FDP}^*(t; \lambda) = \frac{V^*(t; \lambda)}{R^*(t; 
\lambda) \vee 1}
\overset{P}{\longrightarrow}
\frac{\mathbb{E}\left[\sum_{m=1}^M t_{V_m}(\eta_{\alpha\tau}(\Pi + \tau Z), \Pi)\right]}
     {\mathbb{E}\left[\sum_{m=1}^M t_{R_m}(\eta_{\alpha\tau}(\Pi + \tau Z), \Pi)\right]}\]
     \[\hspace{5cm} =\frac{\sum_{m=1}^M \mathbb{E}\left[t_{V_m}(\eta_{\alpha\tau}(\Pi + \tau Z), \Pi)\right]}
     {\sum_{m=1}^M \mathbb{E}\left[t_{R_m}(\eta_{\alpha\tau}(\Pi + \tau Z), \Pi)\right]}.
\]
\end{proof}

\smallskip

\begin{proof}[Proof of Theorem \ref{thm:predictions-ol}]

First we prove statement \eqref{eq:fdp-oracle}. 
By Assumption~\ref{assmptn:1}, we have 
\begin{align}\label{Proof TH. 3.1 1 Assumption}
& \frac{q_0(x)}{q(x)} \leq t  \iff  x\in \bigcup_{m=1}^M (a_m(t), b_m(t)) ,
\end{align}
where the intervals $(a_m(t), b_m(t))$ are disjoint. Then we get that
\begin{align*}
&\text{FDP}^*(t; \lambda) = \frac{\sum_{i=1}^p \bm{1}\left\{ \hat{\beta}_i \in \bigcup_{m=1}^M (a_m(t), b_m(t)), \beta_i = 0 \right\}}{\sum_{i=1}^p \bm{1}\left\{ \hat{\beta}_i \in \bigcup_{m=1}^M (a_m(t), b_m(t)) \right\}}.
\end{align*}
We need to show that
\begin{align*}
&\text{FDP}^*(t; \lambda) \xrightarrow{P} \mathbb{P}\left( \Pi = 0 \mid  \frac{q_0(\eta_{\alpha \tau}(\Pi + \tau Z))}{q(\eta_{\alpha \tau}(\Pi + \tau Z))} \leq t \right).
\end{align*}
Note that
\begin{align*}
&\text{FDP}^*(t; \lambda) = \frac{\sum_{m=1}^M \sum_{i=1}^p \bm{1}\left\{ \hat{\beta}_i \in (a_m(t), b_m(t)), \beta_i = 0 \right\}}{\sum_{m=1}^M \sum_{i=1}^p \bm{1}\left\{\hat{\beta}_i \in (a_m(t), b_m(t)) \right\}}.
\end{align*}
We can write this as
\begin{align*}
&\frac{\sum_{m=1}^M \frac{1}{p} \sum_{i=1}^p \bm{1}\{  \hat{\beta}_i \in (a_m(t), b_m(t)), \beta_i = 0 \}}{\sum_{m=1}^M \frac{1}{p} \sum_{i=1}^p \bm{1}\{  \hat{\beta}_i \in (a_m(t), b_m(t)) \}}\\[2ex]
&= \text{FDP}^{\text{OR}}(t; \lambda)
\end{align*}
which, by Lemma~\ref{lem:lina-1.1},  
\begin{align*}
&\xrightarrow{P} \frac{\sum_{m=1}^M \mathbb{E}\left[ \mathbf{1}\{ \eta_{\alpha \tau}(\Pi + \tau Z) \in (a_m(t), b_m(t)), \Pi = 0 \} \right]}{\sum_{m=1}^M \mathbb{E}\left[ \mathbf{1}\{\eta_{\alpha \tau}(\Pi + \tau Z) \in (a_m(t), b_m(t)) \} \right]}\\[2ex]
&= \frac{\mathbb{E}\left[ \sum_{m=1}^M \mathbf{1}\{ \eta_{\alpha \tau}(\Pi + \tau Z) \in (a_m(t), b_m(t)), \Pi = 0 \} \right]}{\mathbb{E}\left[ \sum_{m=1}^M \mathbf{1}\{ \eta_{\alpha \tau}(\Pi + \tau Z) \in (a_m(t), b_m(t)) \} \right]}\\
&= \frac{\mathbb{E}\left[ \mathbf{1}\left\{ \eta_{\alpha \tau}(\Pi + \tau Z) \in \bigcup_{m=1}^M (a_m(t), b_m(t)), \Pi = 0 \right\} \right]}{\mathbb{E}\left[ \mathbf{1}\left\{\eta_{\alpha \tau}(\Pi + \tau Z) \in \bigcup_{m=1}^M (a_m(t), b_m(t)) \right\} \right]},
\end{align*}
where the last equality uses the fact that the intervals in the union are disjoint. 
By the assumption \eqref{Proof TH. 3.1 1 Assumption}, we get the following equality

\begin{align*}
&= \frac{\mathbb{E}\left[ \mathbf{1}\left\{ \frac{q_0(\eta_{\alpha \tau}(\Pi + \tau Z))}{q(\eta_{\alpha \tau}(\Pi + \tau Z))} \leq t, \Pi = 0 \right\} \right]}{\mathbb{E}\left[ \mathbf{1}\left\{ \frac{q_0(\eta_{\alpha \tau}(\Pi + \tau Z))}{q(\eta_{\alpha \tau}(\Pi + \tau Z))} \leq t \right\} \right]}\\[2ex]
&= \mathbb{P}\left( \Pi = 0 \mid \frac{q_0(\eta_{\alpha \tau}(\Pi + \tau Z))}{q(\eta_{\alpha \tau}(\Pi + \tau Z))} \leq t \right)\\
&\equiv \operatorname{fdp}^*(t; \lambda).
\end{align*}
\noindent We turn to the proof of statement \eqref{eq:tpp-oracle}. 
By Lemma~\ref{lem:lina-1.3} and Lemma~\ref{lem:lina-1.4} and assumption \eqref{Proof TH. 3.1 1 Assumption}, we have 
\begin{align*}
\text{TPP}^*(t; \lambda) &= \frac{\frac{1}{p}(R^*(t; \lambda) - V^*(t; \lambda))}{\frac{1}{p} \sum_{i=1}^{p} \mathbf{1}(\beta_i \neq 0)} \\[2ex]
&\xrightarrow{P} \frac{{\sum_{m=1}^M \mathbb{E}\left[t_{R_m}(\eta_{\alpha\tau}(\Pi + \tau Z), \Pi)\right]}-
     {\sum_{m=1}^M \mathbb{E}\left[t_{V_m}(\eta_{\alpha\tau}(\Pi + \tau Z), \Pi)\right]}}
{\mathbb{E} \left[ \mathbf{1}(\Pi \neq 0) \right]}\\[2ex]
&= \frac{\mathbb{E}\left[ \mathbf{1}\left\{ \frac{q_0(\eta_{\alpha \tau}(\Pi + \tau Z))}{q(\eta_{\alpha \tau}(\Pi + \tau Z))} \leq t \right\} - \mathbf{1}\left\{ \frac{q_0(\eta_{\alpha \tau}(\Pi + \tau Z))}{q(\eta_{\alpha \tau}(\pi + \tau Z))} \leq t, \Pi = 0 \right\} \right] }{\mathbb{E} \left[ \mathbf{1}(\Pi \neq 0) \right]}\\[2ex]
&= \frac{{\mathbb{E}\left[ \mathbf{1}\left\{ \frac{q_0(\eta_{\alpha \tau}(\Pi + \tau Z))}{q(\eta_{\alpha \tau}(\Pi + \tau Z))} \leq t, \Pi \neq 0 \right\} \right]}}{\mathbb{E} \left[ \mathbf{1}(\Pi \neq 0) \right]}\\[2ex]
&= \mathbb{P}\Big( \frac{q_0(\eta_{\alpha \tau}(\Pi + \tau Z))}{q(\eta_{\alpha \tau}(\Pi + \tau Z))} \leq t\mid \Pi \neq 0\Big)\\[2ex]
&= \mathbb{P}\Big( \frac{q_0(\eta_{\alpha \tau}(\Pi_1 + \tau Z))}{q(\eta_{\alpha \tau}(\Pi_1 + \tau Z))} \leq t\Big)\\[2ex]
&\equiv \operatorname{tpp}^*(t; \lambda).
\end{align*}
\end{proof}

\begin{proof}[Proof of Theorem \ref{thm:phi-optimal}]

Under Assumption \ref{assmptn:1} and the conditions assumed on $\varphi$, the problem reduces to showing the following. 
Let $X = \eta_{\alpha\tau}(\Pi + \tau Z)$ where $\Pi$ is as in \eqref{eq:iid-two-groups}, and consider testing $H_0: \Pi = 0$ against $H_1: \Pi\neq 0$. 
Since 
$$\frac{q_0(x)}{q(x)} = \frac{q_0(x)}{(1-\epsilon) q_0(x) + \epsilon q_1(x)}, 
$$
rejecting for $\frac{q_0(x)}{q(x)} <t^*$ is equivalent to rejecting when  $\frac{q_1(x)}{q_0(x)} >s(t^*)$ for a corresponding cutoff $s(t^*)$. 
Thus, define the (likelihood ratio) test 
\[
\mathcal R_\lambda^*(x)
=
\mathbf 1\left\{\frac{q_1(x)}{q_0(x)} > s^* \right\},
\]
where $q_0$ is the unnormalized density of $\eta_{\alpha\tau}(\tau Z) | |Z| > \alpha$, $q_1$ is the unnormalized density of $\eta_{\alpha\tau}(\Pi_1 + \tau Z) | |\Pi_1 + \tau Z| > \alpha\tau$, and, to simplify notation, we write $s^* =: s(t^*)$. 
Now define also the test 
$$
\mathcal R_\lambda^\varphi(x) = \mathbf 1 \left\{ \varphi(x) > t\right\}. 
$$
for some other $\varphi$. 
We need to show that if the constants $s^*$ and $t$ are such that 
$$
\PP(\mathcal R_\lambda^*(X) =1 \mid \Pi\neq 0) = 
\PP(\mathcal R_\lambda^\varphi(X) =1 \mid \Pi\neq 0)=\xi, 
$$
then 
\begin{equation}
\label{eq:type-1-posterior}
\PP(\Pi=0\lvert \mathcal{R}_\lambda^*(X)=1) \leq \PP(\Pi=0\lvert \mathcal{R}_\lambda^\varphi(X)=1).     
\end{equation}
Since the power is fixed at $\xi$, proving \eqref{eq:type-1-posterior} is equivalent to proving that the type I errors in the frequentist (fixed $\Pi$) testing problem satisfy 
$$
\PP(\mathcal{R}_\lambda^*(X)=1 \lvert \Pi=0 ) \leq \PP(\mathcal{R}_\lambda^\varphi(X)=1 \lvert \Pi=0 ). 
$$
It therefore suffices to adapt the proof of the Neyman-Pearson lemma so that instead of showing that the LR test has maximum power among all test with the same type I error probability, we show that it minimizes the probability of a type I error among all tests with the same power. 

Consider the integral
\[
\int \bigl(\mathcal{R}_\lambda^*(x)-\mathcal{R}_\lambda^\varphi (x)\bigr)
\bigl(q_1(x)-s^* \ q_0(x)\bigr)\,dx.
\]
By the construction of $\mathcal{R}_\lambda^*(x)$, $\mathcal R_\lambda^*(x)=1$ whenever
$q_1(x)-s^* \ q_0(x)>0$ and $\mathcal{R}_\lambda^*(x)=0$ whenever
$q_1(x)-s^* \ q_0(x)<0$. Since $\mathcal{R}_\lambda^\varphi(x)\in\{0,1\}$, we get that the integral is nonnegative.\\[0.5ex]
Expanding the integral yields
\begin{align*}
0
&\le
\int \bigl(\mathcal{R}_\lambda^*(x)-\mathcal{R}_\lambda(x)\bigr) q_1(x)\,dx
-
s^*\int \bigl(\mathcal{R}_\lambda^*(x)-\mathcal{R}_\lambda^\varphi(x)\bigr) q_0(x)\,dx \\
&=
\mathbb{P}(\mathcal{R}_\lambda^*(X) =1 \mid \Pi\neq 0)
-
\mathbb{P}(\mathcal{R}_\lambda^\varphi(X) =1 \mid \Pi\neq 0) \\
&\quad
-
s^*\bigl(
\mathbb{P}(\mathcal{R}_\lambda^*(X) =1 \mid \Pi=0)
-
\mathbb{P}(\mathcal{R}_\lambda^\varphi(X) =1 \mid \Pi=0)
\bigr).
\end{align*}
Using the assumption that both tests have the same power $\xi$, we obtain
\[
\mathbb{P}(\mathcal{R}_\lambda^*(X) =1 \mid \Pi=0)
\leq
\mathbb{P}(\mathcal{R}_\lambda^\varphi(X) =1 \mid \Pi=0).
\]
\end{proof}

To establish Theorem~\ref{thm:predictions-eb}, it is essential to demonstrate that the estimator \( \hat{q}(x) \) converges 
uniformly to the true density \( q(x) \). To this end, we begin by proving the pointwise 
convergence in probability of \( \hat{q}(x) \) to \( q(x) \). Lemma~\ref{lem:lina-2.1.1}, proved below, provides the foundation for this intermediate result.




\begin{proof}[Proof of Lemma \ref{lem:lina-2.1.1}]

Let $\tilde{\psi}(x, y) = \mathbf{1}\{x \ne 0\} \, \psi(x, y)$, we get that
\[
\frac{1}{p} \sum_{i=1}^p \mathbf{1}\{{\hat{\beta}_i \ne 0}\} \, \psi(\hat{\beta}_i, \beta_i) = \frac{1}{p} \sum_{i=1}^p \tilde{\psi}(\hat{\beta}_i, \beta_i).
\]
Note that $\tilde{\psi}$ is not pseudo-Lipschitz, so we cannot apply Theorem \ref{thm:manual} of \citet{bayati2012lasso}.  
We will show that, although this theorem does not apply directly, Lemma~\ref{lem:lina-2.1.1} is still correct. 
Thus, define a pseudo-Lipschitz function
\[
\tilde{\psi}_{t'}(x, y) = \left(1 - Q\left(\frac{x}{t'}\right)\right) \psi(x, y),
\]
where $Q(x) = \max(1 - |x|, 0)$ and $t' > 0$. 
Because $\tilde{\psi}_{t'}$ is pseudo-Lipschitz, by Theorem \ref{thm:manual} of \citet{bayati2012lasso} we have 
\[
\lim_{p \to \infty} \frac{1}{p} \sum_{i=1}^p \tilde{\psi}_{t'}(\hat{\beta}_i, \beta_i) = \mathbb{E}[\tilde{\psi}_{t'}(\eta_{\alpha \tau}(\Pi + \tau Z), \Pi)].
\]
Now, 
\begin{align*}
&\left| \tilde{\psi}_{t'}(x, y) - \tilde{\psi}(x, y) \right| \\[2ex]
&= \left| \left(1 - Q\left(\frac{x}{{t'}}\right) - \mathbf{1}\{x \ne 0\} \right) \psi(x, y) \right| \\[2ex]
&\leq \mathbf{1} \left\{0 < |x| < {t'} \right\} |\psi(x, y)|\\[2ex]
&\leq \mathbf{1}\left\{0 < |x| < {t'} \right\} \sup_{x \in \mathbb{R}} |\psi(x, y)|\\[2ex]
& = \mathbf{1}\left\{0 < |x| < {t'} \right\} \|\psi\|_\infty.
\end{align*}
We now bound the probability:
\begin{align*}
&\mathbb{P} \left( \left| \frac{1}{p} \sum_{i=1}^p \tilde{\psi}_{t'}(\hat{\beta}_i, \beta_i) - \frac{1}{p} \sum_{i=1}^p \tilde{\psi}(\hat{\beta}_i, \beta_i) \right| > \epsilon \right) \\
&= \mathbb{P} \left( \frac{1}{p} \left| \sum_{i=1}^p \left( \tilde{\psi}_{t'}(\hat{\beta}_i, \beta_i) - \tilde{\psi}(\hat{\beta}_i, \beta_i) \right) \right| > \epsilon \right) \\
&\leq \mathbb{P} \left( \frac{1}{p} \sum_{i=1}^p \left| \tilde{\psi}_{t'}(\hat{\beta}_i, \beta_i) - \tilde{\psi}(\hat{\beta}_i, \beta_i) \right| > \epsilon \right) \\
&\leq \mathbb{P} \left( \frac{1}{p} \sum_{i=1}^p \mathbf{1}\left\{0 < |\hat{\beta}_i| < {t'} \right\}\, \|\psi\|_\infty > \epsilon \right) \\
&= \mathbb{P} \left( \frac{1}{p} \sum_{i=1}^p \mathbf{1}\left\{0 < |\hat{\beta}_i| < {t'} \right\} > \tilde{\epsilon} \right), \quad \quad \quad \tilde{\epsilon} = \frac{\epsilon}{\|\psi\|_\infty}.
\end{align*}
\\
In \citet{bogdan2013supplement} it is shown that 
\[
\lim_{{t'} \to 0} \lim_{p \to \infty} \sup \mathbb{P} \left( \frac{1}{p} \sum_{i=1}^p \mathbf{1}\left\{0 < |\hat{\beta_i}| < {t'} \right\} > \tilde{\epsilon} \right) = 0,
\]
hence, for any $\tilde{\epsilon} > 0$, 
\begin{align*}
\lim_{p \to \infty} \frac{1}{p} \sum_{i=1}^p \tilde{\psi}(\hat{\beta}_i, \beta_i)
&= \lim_{{t'} \to 0} \lim_{p \to \infty} \frac{1}{p} \sum_{i=1}^p \tilde{\psi}_{t'}(\hat{\beta}_i, \beta_i)\\[2ex]
&= \lim_{{t'} \to 0} \mathbb{E}\left[\tilde{\psi}_{t'}(\eta_{\alpha \tau}(\Pi + \tau Z), \Pi)\right]\\[2ex]
&= \mathbb{E}[\tilde{\psi}(\eta_{\alpha \tau}(\Pi + \tau Z), \Pi)],
\end{align*}
where the last equality follows from applying the dominated convergence theorem to $\tilde{\psi}_{t'} \to \tilde{\psi}$.
\end{proof}

\begin{proof}[Proof of Proposition~\ref{prop:lina-2.1}]
Let
\[
k(\hat{\beta}_i, x) = \frac{1}{h} \phi\left(\frac{x - \hat{\beta}_i}{h} \right) = \frac{1}{\sqrt{2\pi h^2}} \exp\left( -\frac{1}{2} \cdot \frac{(\hat{\beta}_i - x)^2}{h^2} \right), \quad h > 0 \ (\text{fixed}),\  i = 1,..., p.
\]
Define:
\[
\psi(\hat{\beta}_i, \beta_i) = k(\hat{\beta}_i, x), \quad \text{where } x \neq0 \text{ is fixed}.
\]
Then $\psi: \mathbb{R} \times \mathbb{R} \to \mathbb{R}$ is a pseudo-Lipschitz function. 
Now define
\[
\tilde{\psi}(\hat{\beta}_i, \beta_i) = \mathbf{1} \left\{\hat{\beta}_i\ne 0\right\} \cdot \psi(\hat{\beta}_i, \beta_i).
\]
We then get
\[
\frac{1}{p} \sum_{i=1}^p \mathbf{1}\left\{\hat{\beta}_i\ne 0\right\}\cdot \psi(\hat{\beta}_i, \beta_i) = \frac{1}{p} \sum_{i=1}^p \tilde{\psi}(\hat{\beta}_i, \beta_i).
\]
Define 
\[
\hat{q}(x) = \hat{q}_h(x) = \frac{1}{p} \sum_{i=1}^{p} \mathbf{1}\left\{\hat{\beta}_i\ne 0\right\} \cdot \frac{1}{h} \phi\left(\frac{x - \hat{\beta}_i}{h} \right) 
\]
So, 
\begin{align*}
\hat{q}(x) &= \frac{1}{p} \sum_{i=1}^{p} \mathbf{1}\left\{\hat{\beta}_i\ne 0\right\} \cdot k(\hat{\beta}_i, x)\\[2ex]
&= \frac{1}{p} \sum_{i=1}^{p} \mathbf{1}\left\{\hat{\beta}_i\ne 0\right\} \cdot \psi(\hat{\beta}_i, \beta_i)\\[2ex]
&\xrightarrow{P} \mathbb{E}[\tilde{\psi}(\eta_{\alpha \tau}(\Pi + \tau Z), \Pi)]  \quad \text{(by Lemma~\ref{lem:lina-2.1.1})} \\[2ex]
&= \mathbb{E} \left[\mathbf{1}\left\{\eta_{\alpha \tau}(\Pi + \tau Z) \neq 0\right\} \cdot \psi\left(\eta_{\alpha \tau}(\Pi + \tau Z), \Pi\right)\right]
\end{align*}
Now consider 
\[
\mathbb{E} \left[\mathbf{1}\left\{\eta_{\alpha \tau}(\Pi + \tau Z) \neq 0\right\} \cdot \psi\left(\eta_{\alpha \tau}(\Pi + \tau Z),\Pi\right)\right]. 
\]
Let \( Y \sim \eta_{\alpha \tau}(\Pi + \tau Z) \) with density \(f(y)= (1 - w) \delta_0 (y) + w g(y)\) = $(1 - w) \delta_0 (y) + q(y)$. 
Then 
\begin{align*}
&\mathbb{E} \left[\mathbf{1}\left\{\eta_{\alpha \tau}(\Pi + \tau Z) \neq 0\right\} \cdot \psi\left(\eta_{\alpha \tau}(\Pi + \tau Z),\Pi\right)\right]  \\[2ex]
&=\int_{-\infty}^{\infty} \mathbf{1}(y \neq 0) \cdot \frac{1}{h} \phi\left(\frac{x - y}{h}\right) f(y) \, dy \\[2ex]
&= \int_{-\infty}^{\infty} \frac{1}{h} \phi\left(\frac{x - y}{h}\right) q(y) \, dy \\[2ex]
&= \int_{-\infty}^{0} \frac{1}{h} \phi\left(\frac{x - y}{h}\right) q(y) \, dy \ +
\int_{0}^{\infty} \frac{1}{h} \phi\left(\frac{x - y}{h}\right) q(y) \, dy
\end{align*}
Changing variables by letting \( u = \frac{x - y}{h} \Rightarrow y = x - uh \), we can continue 
\[
= \int_{-\infty}^{\frac{x}{h}} \phi(u) \cdot q(x - uh) \, du + \int_{\frac{x}{h}}^{\infty} \phi(u) \cdot q(x - uh) \, du
\]
Now by Taylor expansion of \( q(x - uh) \) around \( x \), assuming \( q \) has at least one derivative, 
\[
q(x - hu) = q(x) - hu \cdot q'(x) + o(h).
\]
Since
 \begin{equation*}
     \lim_{h \to 0^+} q(x - hu) = q(x) \ and \lim_{h \to 0^-} q(x - hu) = q(x), 
 \end{equation*}
we get, 
\begin{equation*}
    \lim_{h \to 0} q(x - hu) = q(x)
\end{equation*}
Suppose also that \( \underset{{x \in \mathbb{R}\setminus \{0\}}}{\sup} |q(x)| = M < \infty \), then by the dominated convergence theorem, 
$$
\begin{aligned}
\lim_{h \to 0}  \lim_{p \to \infty} \hat{q}(x) &= 
\lim_{h \to 0} \left( \lim_{p \to \infty} \frac{1}{p} \sum_{i=1}^{p} \mathbf{1}(\hat{\beta}_i \neq 0) \cdot \frac{1}{h} \phi\left(\frac{x - \hat{\beta}_i}{h} \right) \right)\\
&= \lim_{h \to 0} \mathbb{E} \left[ \mathbf{1}\left\{ \eta_{\alpha \tau}(\Pi + \tau Z) \neq 0 \right\} \cdot \psi\left( \eta_{\alpha \tau}(\Pi + \tau Z), \Pi \right) \right]  \quad \text{(by Lemma~\ref{lem:lina-2.1.1})}\\
&= q(x), 
\end{aligned}
$$
where we used the fact that $\int \phi(u)  \  du = 1$. 
\end{proof}
\medskip
\medskip

\medskip

\begin{proof}[Proof of Proposition \ref{prop:lina-2.2.a}]
We want to show that 
\[
\sup_{x \in [\Delta, \mathcal{L}]} \left| \hat{q}(x) - q(x) \right| \xrightarrow{P} 0.
\]
Define equally spaced points \( \Delta = x_0 < x_1 < x_2 < \cdots < x_{l} = \mathcal{L} \), with 
\[
\Delta' = x_{i+1} - x_i = \frac{1}{l}, \quad i = 0, 1, \ldots, l-1,
\]
where \( l \) will be specified below.
\\
Since by Assumption~\ref{assmptn:2} \( q(x) \) is uniformly continuous on \([ \Delta, \mathcal{L} ]\), for any \( \epsilon' > 0 \), there exists \( l \) sufficiently large such that
\begin{align}\label{proof proposition 2.2.a 1}
|q(x) - q(x_i)| < \frac{\epsilon'}{2}
\end{align}
for all \( x, x_i \in [\Delta, \mathcal{L}] \) satisfying \( |x - x_i| \leq \Delta' = \frac{1}{l} \).
\\
By Proposition~\ref{prop:lina-2.1}, we know that \( \hat{q}(x) \xrightarrow{P} q(x) \) pointwise for fixed \( x \in [\Delta, \mathcal{L}] \), i.e.,
\[
|\hat{q}(x) - q(x)| \xrightarrow{P} 0.
\]
By a union bound, we have
\begin{align}\label{proof proposition 2.2.a 2}
\max_{0 \leq i \leq l} |\hat{q}(x_i) - q(x_i)| \xrightarrow{P} 0.
\end{align}
That is, for any \( \epsilon > 0 \),
\begin{align*}
    &\mathbb{P}\left( \underset{{0 \leq i \leq l}}{\max}|\hat{q}(x_i) - q(x_i)| > \epsilon \right) \\[2ex]
    &= \mathbb{P}\left( \underset{{0 \leq i \leq l}}{\bigcup}  \left\{|\hat{q}(x_i) - q(x_i)| > \epsilon \right\} \right)\\[2ex]
    &\leq \sum_{i=0}^{l} \mathbb{P}\left( |\hat{q}(x_i) - q(x_i)| > \epsilon \right) \xrightarrow{P} 0.
\end{align*}

\smallskip
\smallskip

\noindent Now we want to show that for sufficiently large (but fixed) \( l \),
\[
\max_{0 \leq i < l} \sup_{x_i \leq x \leq x_{i+1}} \left| \hat{q}(x) - \hat{q}(x_i) \right| \leq \frac{\epsilon'}{2}.
\]
For any \( x \in [\Delta, \mathcal{L}] \), there exists \( i \) such that \( x \in [x_i, x_{i+1}] \), and so \( |x - x_i| \leq \Delta' = \frac{1}{l} \). Then,

\begin{align}\label{proof proposition 2.2.a 3}
|\hat{q}(x) - \hat{q}(x_i)|
&= \left| \frac{1}{ph} \sum_{j=1}^p \phi\left( \frac{x - \hat{\beta}_j}{h} \right) \mathbf{1}\{\hat{\beta}_j \neq 0\}
- \frac{1}{ph} \sum_{j=1}^p \phi\left( \frac{x_i - \hat{\beta}_j}{h} \right) \mathbf{1}\{\hat{\beta}_j \neq 0\} \right| \notag\\
&= \left| \frac{1}{ph} \sum_{j=1}^p \left[\phi\left( \frac{x - \hat{\beta}_j}{h} \right) - \phi\left( \frac{x_i - \hat{\beta}_j}{h} \right) \right] \mathbf{1}\{\hat{\beta}_j \neq 0\} \right| \notag \\
&\leq \frac{1}{ph} \sum_{j=1}^p \left| \phi\left( \frac{x - \hat{\beta}_j}{h} \right) - \phi\left( \frac{x_i - \hat{\beta}_j}{h} \right) \right| \notag \\
&\leq \frac{1}{ph} \sum_{j=1}^p L \frac{|x - x_i|}{h} \leq \frac{L \Delta'}{h^2}, 
\end{align}
where \( L \) is a Lipschitz constant for the continuous Lipschitz kernel function \( \phi \).
\\
Now for any \( x \in [\Delta, \mathcal{L}] \), there exists \( i \) such that \( x \in [x_i, x_{i+1}] \), and we have 
\begin{align}\label{proof proposition 2.2.a 4}
|\hat{q}(x) - q(x)| 
&= |\hat{q}(x) - \hat{q}(x_i) + \hat{q}(x_i) - q(x_i) + q(x_i) - q(x)| \notag \\
&\leq |\hat{q}(x) - \hat{q}(x_i)| + |\hat{q}(x_i) - q(x_i)| + |q(x_i) - q(x)| \notag\\
&\overset{\eqref{proof proposition 2.2.a 1} , \eqref{proof proposition 2.2.a 3}}{\leq }\frac{L \Delta'}{h^2} + \max_{0 \leq i \leq l} |\hat{q}(x_i) - q(x_i)| + \frac{\epsilon'}{2}. 
\end{align}
\\
If we choose \( l \) sufficiently large such that 
\[
\Delta' = \frac{1}{l} \leq \frac{\epsilon' h^2}{2L},
\]
then the term \( \frac{L \Delta'}{h^2} \leq \frac{\epsilon'}{2} \). Hence, the bound in \eqref{proof proposition 2.2.a 4} becomes
\begin{equation}
\label{proof proposition 2.2.a 5, prop 4.5}
   |\hat{q}(x) - q(x)| \leq \frac{\epsilon'}{2} + \max_{0 \leq i \leq l} |\hat{q}(x_i) - q(x_i)| + \frac{\epsilon'}{2}. 
\end{equation}
By \eqref{proof proposition 2.2.a 1}, \eqref{proof proposition 2.2.a 2}, and \eqref{proof proposition 2.2.a 5, prop 4.5}, and since \( \epsilon' > 0 \) is arbitrary small, we conclude that
\[
\sup_{x \in [\Delta, \mathcal{L}]} \left| \hat{q}(x) - q(x) \right| \xrightarrow{P} 0.
\]
\end{proof}





\begin{lemma}
\label{lem:lina-2.2}
    Suppose \( \hat{q}\) converges uniformly in probability to \( q \) on $[-\mathcal{L}, -\Delta]$ and on $[\Delta, \mathcal{L}]$.
Then $\hat{q}\rightarrow q$ uniformaly in probability on $[-\mathcal{L}, -\Delta] \cup [\Delta, \mathcal{L}]$. 
\end{lemma}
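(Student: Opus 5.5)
The plan is to write the supremum over the union as the maximum of the two suprema over the pieces, and then use a union bound. Set
\[
S_+ := \sup_{x \in [\Delta, \mathcal{L}]} |\hat{q}(x) - q(x)|, \qquad S_- := \sup_{x \in [-\mathcal{L}, -\Delta]} |\hat{q}(x) - q(x)|.
\]
For any real-valued function the supremum over a union of two sets equals the larger of the two suprema, so deterministically, for every $p$ and every realization,
\[
\sup_{x \in [-\mathcal{L}, -\Delta] \cup [\Delta, \mathcal{L}]} |\hat{q}(x) - q(x)| = \max(S_+, S_-).
\]
Both $S_\pm$ are suprema of continuous functions over compact intervals. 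Hence they are measurable random variables and the event below is well defined.

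Next, fix $\epsilon > 0$. The event $\{\max(S_+, S_-) > \epsilon\}$ equals $\{S_+ > \epsilon\} \cup \{S_- > \epsilon\}$, so the union bound gives
\[
\PP\big(\max(S_+, S_-) > \epsilon\big) \leq \PP(S_+ > \epsilon) + \PP(S_- > \epsilon).
\]
By hypothesis, which is supplied by Proposition~\ref{prop:lina-2.2.a} and the remark following it, both terms on the right tend to zero as $p \to \infty$. This is exactly uniform convergence in probability on the union.

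There is no real obstacle here. The only point needing care is that the two intervals are treated separately because $q$ is only controlled away from zero: Assumption~\ref{assmptn:2} gives uniform continuity on each interval separately, and the neighborhood of $0$, where $f$ has its atom, is excluded. No continuity across the gap $(-\Delta, \Delta)$ is needed, since the max/union-bound argument never compares points from different pieces.
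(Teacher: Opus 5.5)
Your proof is correct and takes essentially the same approach as the paper. The paper also chooses $p \geq \max(P_1,P_2)$ so that both one-sided suprema are below $\epsilon$ with high probability, and then concludes for the union. Your version makes explicit the two steps the paper leaves implicit: that the supremum over the union equals $\max(S_+,S_-)$, and that the union bound $\PP(\max(S_+,S_-)>\epsilon)\leq \PP(S_+>\epsilon)+\PP(S_->\epsilon)$ combines the two ``with high probability'' statements into one.
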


\begin{proof}[Proof of~\ref{lem:lina-2.2}]

Since $\hat{q} \to q $ uniformly in probability on $[-\mathcal{L}, -\Delta] $ and on $[\Delta, \mathcal{L}]$ , we find that for every $\epsilon > 0$, there exists $P_1$ such that for all $p \geq P_1$,
\[
\sup_{x \in [-\mathcal{L}, -\Delta]} \left| \hat{q}(x) - q(x) \right| < \epsilon
\]
with high probability. 
Similarly, for every $\epsilon > 0$, there exists $P_2$ such that for all $p \geq P_2$,
\[
\sup_{x \in[\Delta, \mathcal{L}]} \left| \hat{q}(x) - q(x) \right| < \epsilon
\]
with high probability.
\\
Let $N = \max(P_1, P_2)$. Then for all $p \geq N$ and every $\epsilon > 0$, we have
\[
\sup_{x \in [-\mathcal{L}, -\Delta] \cup [\Delta, \mathcal{L}]} \left| \hat{q}(x) - q(x) \right| < \epsilon
\]
with high probability. 
Alternatively, we can express this as
\[
\mathbb{P} \left( \sup_{x \in [-\mathcal{L}, -\Delta] \cup [\Delta, \mathcal{L}]} \left| \hat{q}(x) - q(x) \right| > \epsilon \right) < \delta,
\quad \text{for all small } \delta > 0.
\]

\end{proof}

\begin{proposition}
\label{prop: lina:2.3}
    $\forall \epsilon > 0, \quad \mathbb{P}\left( \underset{x \in I}{\sup} \left| 
\hat{q}_0(x) - q_0(x) \right| > \epsilon \right) \to 0,$
where \( I = [-\mathcal{L}, -\Delta] \cup [\Delta, \mathcal{L}] \), and $\mathcal{L},\  \Delta$ are defined in Assumptions \eqref{assmptn:1} and \eqref{assmptn:2}.
\end{proposition}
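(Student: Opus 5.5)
The plan is to exploit the fact that $\hat q_0$ is an explicit parametric function of the two plug-in estimates $(\hat\tau,\widehat{\alpha\tau})$, so uniform convergence on $I$ reduces to consistency of these estimates and a continuity argument. By the consistency result of \citet{mousavi2018consistent} quoted in \eqref{tau-alphatau-hat}, $\hat\tau\to\tau$ and $\widehat{\alpha\tau}\to\alpha\tau$ in probability. By the continuous mapping theorem, $\hat\alpha=\widehat{\alpha\tau}/\hat\tau\to\alpha$, since $\tau>0$. Note that $q_0=w_0g_0$ with $w_0=2(1-\Phi(\alpha))$, and the normalizing constant $2(1-\Phi(\alpha))$ in $g_0$ cancels against $w_0$. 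Hence, for $x>0$,
\[
q_0(x)=\frac{1}{\tau}\phi\Big(\frac{x+\alpha\tau}{\tau}\Big),\qquad \hat q_0(x)=\frac{1}{\hat\tau}\phi\Big(\frac{x+\widehat{\alpha\tau}}{\hat\tau}\Big),
\]
and symmetrically for $x<0$ with $x-\alpha\tau$ in place of $x+\alpha\tau$. It therefore suffices to treat $[\Delta,\mathcal L]$; the negative interval is identical, and the two pieces are combined exactly as in Lemma~\ref{lem:lina-2.2}.

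Define $G(x;a,s)=s^{-1}\phi((x+a)/s)$ on the compact set $K=[\Delta,\mathcal L]\times[\alpha\tau-\eta,\alpha\tau+\eta]\times[\tau/2,2\tau]$ for a small $\eta>0$. $G$ is continuously differentiable there, with partial derivatives in $(a,s)$ bounded by a constant $C$ that depends only on $\tau,\alpha\tau,\mathcal L$. The derivatives are bounded because $s\ge\tau/2>0$ and $\phi,\phi'$ are bounded. By the mean value theorem, on the event
\[
E_p=\{|\widehat{\alpha\tau}-\alpha\tau|\le\eta,\ |\hat\tau-\tau|\le\tau/2\},
\]
we get
\[
\sup_{x\in[\Delta,\mathcal L]}|\hat q_0(x)-q_0(x)|\le C\big(|\widehat{\alpha\tau}-\alpha\tau|+|\hat\tau-\tau|\big).
\]
This bound is uniform in $x$ because the constant does not depend on $x$.

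To conclude, fix $\epsilon>0$. Then
\[
\PP\Big(\sup_{I}|\hat q_0-q_0|>\epsilon\Big)\le \PP(E_p^c)+\PP\big(C(|\widehat{\alpha\tau}-\alpha\tau|+|\hat\tau-\tau|)>\epsilon\big),
\]
and both terms vanish by consistency. The main point needing care is the consistency of $(\hat\tau,\widehat{\alpha\tau})$ at the fixed $\lambda$ under our setup, which I take from \citet{mousavi2018consistent}. A minor subtlety is keeping $\hat\tau$ bounded away from zero, which is handled by the event $E_p$. Apart from this, the argument is purely deterministic continuity, since $I$ excludes a neighborhood of zero and is bounded.
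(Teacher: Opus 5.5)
Your proposal is correct and follows essentially the same route as the paper. Both arguments use the explicit Gaussian form of $q_0$ and $\hat q_0$ after the normalizing constants cancel, together with consistency of $(\hat\tau,\widehat{\alpha\tau})$ and continuity of $\phi$ on the bounded set $I$.

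The only difference is the continuity bound. The paper splits the difference with the triangle inequality and invokes uniform continuity of $\phi$. You use a mean-value (Lipschitz-in-parameters) bound on a compact parameter set, which makes explicit two points the paper leaves implicit: that the bound is uniform in $x$, and that $\hat\tau$ stays bounded away from zero.
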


\begin{proof}[Proof of Proposition~\ref{prop: lina:2.3} ]
\[
\sup_{x \in I} \left| {\hat{q}_0(x)} - q_0(x) \right|
 \xrightarrow{P} 0,
\]
where
\[
q_0(x) = \frac{1}{\tau} \phi \left( \frac{x + \alpha\tau \cdot \text{sign}(x)}{\tau} \right), \quad
\hat{q}_0(x) = \frac{1}{\hat{\tau}} \phi \left( \frac{x + \widehat{\alpha\tau} \cdot \text{sign}(x)}{\hat{\tau}} \right),
\]
\( \phi(x) = \frac{1}{\sqrt{2\pi}} e^{-x^2/2} \) is the standard normal density, and
\( \hat{\tau} \xrightarrow{P} \tau \), \( \widehat{\alpha\tau} \xrightarrow{P} \alpha\tau \).
\\
Let \( \epsilon > 0 \). Since \( \phi \) is uniformly continuous on any compact interval and \( [-\mathcal{L}, -\Delta]\) is compact, there exists \( \delta_1 > 0 \) such that for any \( x, y \in [-\mathcal{L}, -\Delta]\)
\[
|x - y| < \delta_1 \quad \Rightarrow \quad |\phi(x) - \phi(y)| < {\epsilon}.
\]
Define:
\[
x_1 := \frac{x + \widehat{\alpha\tau} \cdot \text{sign}(x)}{\hat{\tau}}, \quad
x_2 := \frac{x + \alpha\tau \cdot \text{sign}(x)}{\tau}.
\]
Then
\[
\left| {\hat{q}_0(x)} - q_0(x) \right|
= \left| \frac{1}{\hat{\tau}} \cdot {\phi(x_1)} - \frac{1}{\tau}{\phi(x_2)} \right| 
= \left|  \frac{1}{\hat{\tau}} \cdot {\phi(x_1)} - \frac{1}{\tau}{\phi(x_2)} - \frac{1}{\tau}{\phi(x_1)} + \frac{1}{\tau}{\phi(x_1)}\right|.
\]
Using the triangle inequality, 
\begin{align*}
\left| {\hat{q}_0(x)} - q_0(x) \right|
&\leq \left|  \frac{1}{\hat{\tau}} \cdot {\phi(x_1)} - \frac{1}{\tau}{\phi(x_1)} \right| + \left| \frac{1}{\tau}{\phi(x_1)} - \frac{1}{\tau}{\phi(x_2)}\right| \\[2ex]
& \leq {\phi(x_1)}\left|  \frac{1}{\hat{\tau}}   - \frac{1}{\tau}  \right| + \frac{1}{\tau} \left|{\phi(x_1)} - {\phi(x_2)}\right|
\end{align*}
Since \( \phi \) is bounded on \( [-\mathcal{L}, -\Delta]\), there exists an upper bound \( \mathcal{M} > 0 \) such that \( \phi(x_1) \leq \mathcal{M} \), and since \( \hat{\tau} \xrightarrow{P} \tau \) and \( \widehat{\alpha\tau} \xrightarrow{P} \alpha\tau \),  \(\exists P_1 \  large \ enough \ s.t\  \forall p \geq P_1\) , with high probability:
\[
|x_1 - x_2| < \delta_1, \quad \left| {\hat{\tau}}-{\tau}  \right| < {\epsilon}.
\]So
\[
\left| {\phi(x_1) - \phi(x_2)} \right| \leq {\epsilon}.
\]
Therefore, with high probability, 
\[
\sup_{x \in  [-\mathcal{L}, -\Delta]} \left| {\hat{q}_0(x)} - {q_0(x)} \right|
\leq \mathcal{M} \cdot \frac{\epsilon}{\tau(\tau - \epsilon) } + \frac{\epsilon}{\tau}.
\]
\\
Similarly we can show that,\(\exists P_2\)  large enough s.t $\forall p \geq P_2$, with high probability
\\
$\underset{{x \in  [\Delta, \mathcal{L}]}}{\sup} \left| {\hat{q}_0(x)} - {q_0(x)} \right|
\leq \epsilon.
$
Now let \(P = max(P_1, P_2)\), then for all \(p\geq P\) and \(\forall \epsilon >0\) we have $\underset{{x \in I}}{\sup} \left| {\hat{q}_0(x)} - {q_0(x)} \right| \leq \epsilon$ with high probability. 
\\
We conclude that, as \( \epsilon \to 0 \),
\[
\underset{{x \in I}}{\sup} \left| {\hat{q}_0(x)} - {q_0(x)} \right| \xrightarrow{P} 0.
\]
\end{proof}

\begin{lemma}
\label{lem: lina: 2.4}
    \[
\frac{1}{p} \sum_{i=1}^{p} \mathbf{1}\left\{\hat{\beta}_i \neq 0, \frac{\hat{q}_0(\hat{\beta}_i)}{\hat{q}(\hat{\beta}_i)} \leq t\right\} \xrightarrow{P} 
\mathbb{P}\left(|\Pi + \tau Z| > \alpha\tau, \frac{q_0(\eta_{\alpha \tau}(\Pi + \tau Z))}{q(\eta_{\alpha \tau}(\Pi + \tau Z))} \leq t\right)
\]
\end{lemma}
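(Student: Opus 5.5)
The plan is a sandwich argument: squeeze the random set $\{x: \hat{q}_0(x)/\hat{q}(x) \le t\}$ between two deterministic sets of the form handled in Lemma~\ref{lem:lina-1.3}, and then let the gap between them shrink. Write $r(x) = q_0(x)/q(x)$ and $\hat r(x) = \hat q_0(x)/\hat q(x)$. Fix a small $\gamma>0$ and consider the deterministic sets $A_{t\pm\gamma} = \{x \neq 0 : r(x) \le t\pm\gamma\}$. For $\gamma$ small enough that $t+\gamma \le t_0$ (if $t=t_0$, use $t-\gamma$ on one side and a right-continuity argument in $t$ instead), Assumption~\ref{assmptn:1} says each of these is a finite disjoint union of intervals bounded away from zero. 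The goal is to show that, with probability tending to one,
\[
\frac1p\sum_i \mathbf{1}\{\hat\beta_i\in A_{t-\gamma}\} - \eta_p \;\le\; \frac1p\sum_i \mathbf{1}\{\hat\beta_i\neq 0,\ \hat r(\hat\beta_i)\le t\} \;\le\; \frac1p\sum_i \mathbf{1}\{\hat\beta_i\in A_{t+\gamma}\} + \eta_p ,
\]
where $\eta_p$ is an error that can be made arbitrarily small.

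\textbf{Step 1 (uniform ratio convergence on $I$).} Combine Proposition~\ref{prop:lina-2.2.a}, its remark, Lemma~\ref{lem:lina-2.2} and Proposition~\ref{prop: lina:2.3}: $\hat q \to q$ and $\hat q_0\to q_0$ uniformly in probability on $I=[-\mathcal L,-\Delta]\cup[\Delta,\mathcal L]$. To pass to the ratio, bound $q$ below on $I$. It is continuous on $I$ by Assumption~\ref{assmptn:2}, and strictly positive there because it contains the Gaussian-convolution part $(1-\epsilon)q_0$. Hence $\inf_I q>0$, and $\sup_{I}|\hat r - r|\to 0$ in probability. On that event, for every $x\in I$, $r(x)\le t-\gamma$ implies $\hat r(x)\le t$, and $\hat r(x)\le t$ implies $r(x)\le t+\gamma$.

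\textbf{Step 2 (controlling coordinates outside $I$).} Indices with $0<|\hat\beta_i|<\Delta$ or $|\hat\beta_i|>\mathcal L$ are not controlled by Step 1, so their fraction has to be shown negligible.
\begin{itemize}
\item For $|\hat\beta_i|>\mathcal L$: by Theorem 1.5 of \citet{bayati2012lasso} applied to a Lipschitz surrogate of $\mathbf{1}\{|x|>\mathcal L\}$, the fraction converges to roughly $\mathbb P(|\eta_{\alpha\tau}(\Pi+\tau Z)|>\mathcal L)$. This is small once $\mathcal L$ is large. Assumption~\ref{assmptn:2} is stated for one fixed $\mathcal L$, so I would either enlarge $\mathcal L$ or argue that large $|x|$ falls in a tail interval $(a_m,\infty)$ on which both $r$ and $\hat r$ are small.
\item For $0<|\hat\beta_i|<\Delta$: here $r(x)>t$, since these points lie outside the intervals of Assumption~\ref{assmptn:1}. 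The difficulty is that $\hat r$ is not uniformly controlled near $0$, so such points could still be selected. I would try to shrink $\Delta$ towards zero, using the \citet{bogdan2013supplement} bound that $\frac1p\#\{0<|\hat\beta_i|<t'\}$ vanishes as $t'\to0$ (as in the proof of Lemma~\ref{lem:lina-2.1.1}). On $[t',\Delta]$, continuity of $r$ with $r>t$ gives a margin, uniformly on compacts.
\end{itemize}

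\textbf{Step 3 (limits of the bounds).} Apply Lemma~\ref{lem:lina-1.3} with thresholds $t\pm\gamma$. The two outer averages converge to $\mathbb P(\eta_{\alpha\tau}(\Pi+\tau Z)\in A_{t\pm\gamma})$. As $\gamma\to0$ these converge to $\mathbb P(|\Pi+\tau Z|>\alpha\tau,\ r(\eta_{\alpha\tau}(\Pi+\tau Z))\le t)$, provided $\mathbb P(r(\eta)=t)=0$. This holds if the interval endpoints $a_m(t),b_m(t)$ vary continuously in $t$, since $\eta$ has a density off zero. Letting $p\to\infty$, then $\gamma\to 0$, then $\mathcal L\to\infty$, $t'\to0$ yields the claim.

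The main obstacle is Step 2, specifically the region near zero and beyond $\mathcal L$, where uniform convergence of $\hat r$ is unavailable. The fallback is the mass-vanishing arguments above, together with continuity of $r$ away from $0$.
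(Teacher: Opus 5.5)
Your proposal is correct and follows the paper's proof. Both arguments bracket the EB count between oracle counts at thresholds $t\pm\delta'$ on the event that $\hat q_0/\hat q$ is uniformly close to $q_0/q$, obtain those counts from Theorem~\ref{thm:predictions-ol}/Lemma~\ref{lem:lina-1.3}, and let $\delta'\to0$ using continuity of the law of $r(\eta_{\alpha\tau}(\Pi+\tau Z))$ at $t$.

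Your Step~2 is more careful than the paper. The paper simply asserts $\sup_{i\le p}|\hat r(\hat\beta_i)-r(\hat\beta_i)|\le\delta'$ with high probability, although uniform convergence was only established on $I$ (and the ratio was never treated). Your hedges there resolve cleanly: for $x>0$, $q(x)=\mathbb{E}[\tau^{-1}\phi((x+\alpha\tau-\Pi)/\tau)]$ is a Gaussian convolution with bounded derivative, so Assumption~\ref{assmptn:2} holds for every $\mathcal{L}$ and every smaller $\Delta$. Use the enlarge-$\mathcal{L}$/shrink-$\Delta$ option; the tail-interval alternative fails in general, e.g.\ $r(x)\to 1/(1-\epsilon)$ as $x\to\infty$ when $\Pi_1<0$ a.s.
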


\smallskip
\smallskip 

\begin{proof}[Proof of Lemma~\ref{lem: lina: 2.4}]
We need to show that $\forall \delta >0$, 
\begin{align*}
& \mathbb{P}(|\frac{1}{p} \sum_{i=1}^{p} \mathbf{1}\texttt{\{}{\hat{\beta_i} \neq 0,\frac{\hat q_0(\hat{\beta_i})}{\hat q(\hat{{\beta_i})}}\leq t}\texttt{\}} -\mathbb{P}(|\Pi + \tau Z| > \alpha\tau, \frac{q_0(\eta_{\alpha\tau}(\Pi + \tau Z))}{q(\eta_{\alpha\tau}(\Pi + \tau Z))} \leq t)|> \delta) \to 0.
\end{align*}
By the triangle inequality, 
\begin{align*}
&\mathbb{P}(|\frac{1}{p} \sum_{i=1}^{p} \mathbf{1}\texttt{\{}{\hat{\beta_i} \neq 0,\frac{\hat q_0(\hat{\beta_i})}{\hat q(\hat{{\beta_i})}}\leq t}\texttt{\}} -\mathbb{P}(|\Pi + \tau Z| > \alpha\tau, \frac{q_0(\eta_{\alpha\tau}(\Pi + \tau Z))}{q(\eta_{\alpha\tau}(\Pi + \tau Z))} \leq t)|> \delta)\\[2ex]
&\leq \mathbb{P}(|\frac{1}{p} \sum_{i=1}^{p} \mathbf{1}\texttt{\{}{\hat{\beta_i} \neq 0,\frac{\hat q_0(\hat{\beta_i})}{\hat q(\hat{{\beta_i})}}\leq t}\texttt{\}} -
\frac{1}{p}\sum_{i=1}^{p} 1\texttt{\{}{\hat{\beta_i} \neq 0,\frac{q_0({\hat{\beta_i}})}{q({{\hat{\beta_i}})}}\leq t}\texttt{\}}|> \frac{\delta}{2}) \\[2ex]
&\quad +  \mathbb{P}(|\frac{1}{p}\sum_{i=1}^{p} \mathbf{1}\texttt{\{}{\hat{\beta_i} \neq 0,\frac{q_0({\hat{\beta_i}})}{q({{\hat{\beta_i}})}}\leq t}\texttt{\}} - \mathbb{P}(|\Pi + \tau Z| > \alpha\tau, \frac{q_0(\eta_{\alpha\tau}(\Pi + \tau Z))}{q(\eta_{\alpha\tau}(\Pi + \tau Z))} \leq t)|> \frac{\delta}{2}) 
\end{align*}
\smallskip
Since in Theorem~\ref{thm:predictions-ol} we proved that 
\begin{align*}
&\frac{1}{p} \sum_{i=1}^{p} \mathbf{1}\texttt{\{}{\hat{\beta_i} \neq 0,\frac{q_0(\hat{\beta_i})}{q(\hat{{\beta_i})}}\leq t}\texttt{\}} \to \mathbb{P}(|\Pi + \tau Z| > \alpha\tau, \frac{q_0(\eta_{\alpha\tau}(\Pi + \tau Z))}{q(\eta_{\alpha\tau}(\Pi + \tau Z))} \leq t), 
\end{align*}
we have 
\begin{align*}
&\lim\limits_{p \to \infty}\mathbb{P}(|\frac{1}{p}\sum_{i=1}^{p} \mathbf{1}\texttt{\{}{\hat{\beta_i} \neq 0,\frac{q_0({\hat{\beta_i}})}{q({{\hat{\beta_i}})}}\leq t}\texttt{\}} - \mathbb{P}(|\Pi + \tau Z| > \alpha\tau, \frac{q_0(\eta_{\alpha\tau}(\Pi + \tau Z))}{q(\eta_{\alpha\tau}(\Pi + \tau Z))}
\leq t)|> \frac{\delta}{2}) = 0.
\end{align*}
The proof will be completed if we additionally show that
\begin{align*}
&\lim\limits_{p \to \infty}\mathbb{P}(|\frac{1}{p} \sum_{i=1}^{p} \mathbf{1}\texttt{\{}{\hat{\beta_i} \neq 0,\frac{\hat q_0(\hat{\beta_i})}{\hat q(\hat{{\beta_i})}}\leq t} \texttt{\}} -  
\frac{1}{p}\sum_{i=1}^{p} \mathbf{1} \texttt{\{}{\hat{\beta_i} \neq 0,\frac{q_0({\hat{\beta_i}})}{q({{\hat{\beta_i}})}}\leq t}\texttt{\}}|>\frac{\delta}{2})= 0. 
\end{align*}
Since we showed that $\frac{\hat q_0}{\hat q} \to \frac{ q_0}{q}$ uniformly in probability,
we get that for large p, the event  
\begin{align*}
A=\texttt{\{} \underset{1\leq i\leq p}{\sup} |\frac{\hat q_0(\hat{\beta_i})}{\hat q(\hat{{\beta_i})}}- \frac{q_0({\hat{\beta_i}})}{q({{\hat{\beta_i}})}}|\leq \delta' \texttt{\}} 
\end{align*}
occurs with high probability. 
Now, for large $p$, and on the event $A$, 
\smallskip
\smallskip

\noindent if $ \frac{q_0(\hat{\beta_i})}{q(\hat{\beta_i})} < t - \delta'$, then
\begin{align*}
\frac{\hat q_0(\hat{\beta_i})}{\hat q(\hat{{\beta_i})}} &= \frac{q_0(\hat{\beta_i})}{q(\hat{\beta_i})} + \frac{\hat q_0(\hat{\beta_i})}{\hat q(\hat{{\beta_i})}}-\frac{q_0(\hat{\beta_i})}{q(\hat{\beta_i})} \\[2ex]
&\leq \frac{q_0(\hat{\beta_i})}{q(\hat{\beta_i})} +| \frac{\hat q_0(\hat{\beta_i})}{\hat q(\hat{{\beta_i})}}-\frac{q_0(\hat{\beta_i})}{q(\hat{\beta_i})}| \\[2ex]
&\leq t-\delta'+ \delta'= t,
\end{align*}

\noindent As a result, on the event $A$, 
\smallskip
\begin{align} \label{eq: lemma 2.4 1}
&\mathbf{1} \texttt{\{} \hat{\beta_i} \neq 0,\frac{q_0(\hat{\beta_i})}{q(\hat{\beta_i})} < t - \delta' \texttt{\}} \leq \mathbf{1} \texttt{\{} \hat{\beta_i} \neq 0, \frac{ \hat q_0(\hat{\beta_i})}{\hat q(\hat{\beta_i})} < t \texttt{\}}. 
\end{align}

\smallskip

\noindent Similarly, if  $\frac{\hat q_0(\hat{\beta_i})}{\hat q(\hat{{\beta_i})}} < t ,$ then 
\begin{align*}
\frac{q_0(\hat{\beta_i})}{q(\hat{{\beta_i})}} &= \frac{\hat q_0(\hat{\beta_i})}{\hat q(\hat{\beta_i})} + \frac{q_0(\hat{\beta_i})}{ q(\hat{{\beta_i})}}-\frac{\hat q_0(\hat{\beta_i})}{\hat q(\hat{\beta_i})} \\[2ex]
&\leq \frac{\hat q_0(\hat{\beta_i})}{\hat q(\hat{\beta_i})} +|\frac{q_0(\hat{\beta_i})}{q(\hat{\beta_i})}-\frac{\hat q_0(\hat{\beta_i})}{\hat q(\hat{{\beta_i})}}|\\[2ex]
&\leq t  + \delta'.
\end{align*}

\noindent As a result, on the event $A$,  
\begin{align} \label{eq: Lemma 2.4 2}
\mathbf{1} \texttt{\{} \hat{\beta_i} \neq 0, \frac{\hat q_0(\hat{\beta_i})}{\hat q(\hat{\beta_i})} < t \texttt{\}} \leq \mathbf{1} \texttt{\{} \hat{\beta_i} \neq 0, \frac{ q_0(\hat{\beta_i})}{q(\hat{\beta_i})} < t + \delta' \texttt{\}}. 
\end{align}
\smallskip

\noindent Equations \eqref{eq: lemma 2.4 1} and \eqref{eq: Lemma 2.4 2} give us that for large $p$, with high probability, 
\smallskip
\smallskip
\begin{align*}
\mathbf{1} \texttt{\{} \hat{\beta_i} \neq 0, \frac{q_0(\hat{\beta_i})}{q(\hat{\beta_i})} < t - \delta'\texttt{\}}\leq \mathbf{1} \texttt{\{} \hat{\beta_i} \neq 0, \frac{\hat q_0(\hat{\beta_i})}{\hat q(\hat{\beta_i})} < t \texttt{\}} \leq \mathbf{1} \texttt{\{} \hat{\beta_i} \neq 0, \frac{ q_0(\hat{\beta_i})}{q(\hat{\beta_i})} < t +\delta'  \texttt{\}}, 
\end{align*}
\smallskip

\noindent which implies that 
\smallskip
\begin{align*}
\frac{1}{p} \sum_{i=1}^{p} \mathbf{1} \texttt{\{} \hat{\beta_i} \neq 0, \frac{q_0(\hat{\beta_i})}{q(\hat{\beta_i})} < t - \delta' \texttt{\}} &\leq  \frac{1}{p} \sum_{i=1}^{p} \mathbf{1} \texttt{\{} \hat{\beta_i} \neq 0, \frac{\hat q_0(\hat{\beta_i})}{\hat q(\hat{\beta_i})} < t \}
\leq \frac{1}{p} \sum_{i=1}^{p} \mathbf{1} \texttt{\{} \hat{\beta_i} \neq 0, \frac{ q_0(\hat{\beta_i})}{q(\hat{\beta_i})} < t +\delta'  \texttt{\}}
\end{align*}

\noindent If we subtract 
\noindent $\frac{1}{p} \sum_{i=1}^{p}  \mathbf{1} \texttt{\{} \hat{\beta_i} \neq 0, \frac{q_0(\hat{\beta_i})}{q(\hat{\beta_i})} <t\texttt{\}}$ from all sides we get, 
\begin{align*}
 &\frac{1}{p} \sum_{i=1}^{p} \mathbf{1}\texttt{\{} \hat{\beta_i} \neq 0,\frac{q_0(\hat{\beta_i})}{q(\hat{\beta_i})} < t - \delta' \texttt{\}} -\frac{1}{p} \sum_{i=1}^{p} \mathbf{1} \texttt{\{} \hat{\beta_i} \neq 0, \frac{q_0(\hat{\beta_i})}{q(\hat{\beta_i})} <t\texttt{\}} \\[2ex]
 &\leq \frac{1}{p} \sum_{i=1}^{p}\mathbf{1} \texttt{\{} \hat{\beta_i} \neq 0,\frac{\hat q_0(\hat{\beta_i})}{\hat q(\hat{\beta_i})} < t \texttt{\}} - \frac{1}{p} \sum_{i=1}^{p} \mathbf{1} \texttt{\{} \hat{\beta_i} \neq 0, \frac{q_0(\hat{\beta_i})}{q(\hat{\beta_i})} <t\texttt{\}}\\[2ex]
&\leq
\frac{1}{p} \sum_{i=1}^{p} \mathbf{1}\texttt{\{} \hat{\beta_i} \neq 0,\frac{ q_0(\hat{\beta_i})}{q(\hat{\beta_i})} < t +\delta'  \texttt{\}} - \frac{1}{p} \sum_{i=1}^{p} \mathbf{1} \texttt{\{} \hat{\beta_i} \neq 0,\frac{q_0(\hat{\beta_i})}{q(\hat{\beta_i})} <t\texttt{\}}
\end{align*}
\smallskip

\noindent Now let us look at 
\smallskip
\smallskip
\begin{align*}
&\frac{1}{p} \sum_{i=1}^{p} \mathbf{1} \texttt{\{} \hat{\beta_i} \neq 0, \frac{q_0(\hat{\beta_i})}{q(\hat{\beta_i})} < t - \delta' \texttt{\}} -\frac{1}{p} \sum_{i=1}^{p} \mathbf{1} \texttt{\{} \hat{\beta_i} \neq 0, \frac{q_0(\hat{\beta_i})}{q(\hat{\beta_i})} <t\texttt{\}}  \\[2ex]
&= -(\frac{1}{p} \sum_{i=1}^{p} \mathbf{1} \texttt{\{} \hat{\beta_i} \neq 0, t-\delta' <\frac{q_0(\hat{\beta_i})}{q(\hat{\beta_i})} <t\texttt{\}})
\end{align*}
\smallskip

\noindent By Theorem~\ref{thm:predictions-ol} we have
\begin{align*}
\frac{1}{p} \sum_{i=1}^{p} \mathbf{1} \texttt{\{} \hat{\beta_i} \neq 0, t-\delta' <\frac{q_0(\hat{\beta_i})}{q(\hat{\beta_i})} <t\texttt{\}} \to \mathbb{P}(|\Pi + \tau Z| > \alpha\tau, t-\delta' <
\frac{q_0(\eta_{\alpha\tau}(\Pi + \tau Z)) }{q(\eta_{\alpha\tau}(\Pi + \tau Z) )} <t) 
\end{align*}
So, 
\begin{align*}
    &\lim\limits_{\delta' \to 0} \lim\limits_{p \to \infty} \frac{1}{p} \sum_{i=1}^{p} \mathbf{1} \texttt{\{} \hat{\beta_i} \neq 0, t-\delta' <\frac{q_0(\hat{\beta_i})}{q(\hat{\beta_i})} <t\texttt{\}}\\[2ex]
    &=\lim\limits_{\delta' \to 0} \mathbb{P} (|\Pi + \tau Z| > \alpha\tau, t-\delta' <\frac{q_0(\eta_{\alpha\tau}(\Pi + \tau Z)) }{q(\eta_{\alpha\tau}(\Pi + \tau Z) )} <t).
    \end{align*}
\smallskip

\noindent The random variable 
$$
Y = \frac{q_0(\eta_{\alpha\tau}(\Pi + \tau Z)) }{q(\eta_{\alpha\tau}(\Pi + \tau Z) )}
$$ 
is continuous, with continuous p.d.f.~at $t$, so, if $\delta'$ is sufficiently small, then 
\[
\lim\limits_{\delta' \to 0} \mathbb{P}(|\Pi + \tau Z| > \alpha\tau,, t-\delta' <\frac{q_0(\eta_{\alpha\tau}(\Pi + \tau Z)) }{q(\eta_{\alpha\tau}(\Pi + \tau Z) )} <t)= 0, 
\]
and the same for $\frac{1}{p} \sum_{i=1}^{p} \mathbf{1} \texttt{\{} \hat{\beta_i} \neq 0, t <\frac{q_0(\hat{\beta_i})}{q(\hat{\beta_i})} <t + \delta' \texttt{\}}$. 
We therefore have 
\begin{align*}
&\lim\limits_{\delta' \to 0} \lim\limits_{p \to \infty} \frac{1}{p} \sum_{i=1}^{p} \mathbf{1} \texttt{\{} \hat{\beta_i} \neq 0,  t-\delta' <\frac{q_0(\hat{\beta_i})}{q(\hat{\beta_i})} <t\texttt{\}} \\[2ex] &=\lim\limits_{\delta' \to 0} \mathbb{P}(|\Pi + \tau Z| > \alpha\tau, t-\delta' <\frac{q_0(\eta_{\alpha\tau}(\Pi + \tau Z)) }{q(\eta_{\alpha\tau}(\Pi + \tau Z) )} <t) =0,
\end{align*}
and 
\smallskip
\begin{align*}
&\lim\limits_{\delta' \to 0} \lim\limits_{p \to \infty} \frac{1}{p} \sum_{i=1}^{p} \mathbf{1} \texttt{\{} \hat{\beta_i} \neq 0, t <\frac{q_0(\hat{\beta_i})}{q(\hat{\beta_i})} <t+\delta'\texttt{\}} \\[2ex]
&=\lim\limits_{\delta' \to 0} \mathbb{P}(|\Pi + \tau Z| > \alpha\tau, t<\frac{q_0(\eta_{\alpha\tau}(\Pi + \tau Z)) }{q(\eta_{\alpha\tau}(\Pi + \tau Z) )} <t+\delta') =0 
\end{align*}
\smallskip
So by the sandwich theorem we conclude that 
\[\frac{1}{p} \sum_{i=1}^{p}\mathbf{1} \texttt{\{} \hat{\beta_i} \neq 0, \frac{\hat q_0(\hat{\beta_i})}{\hat q(\hat{\beta_i})} < t \texttt{\}} - \frac{1}{p} \sum_{i=1}^{p} \mathbf{1} \texttt{\{} \hat{\beta_i} \neq 0, \frac{q_0(\hat{\beta_i})}{q(\hat{\beta_i})} <t\texttt{\}} \to 0.\] 

\end{proof}

\begin{lemma}
\label{lem: lina: 2.5}
    \[
\frac{1}{p} \sum_{i=1}^{p} \mathbf{1}\left\{\hat{\beta}_i \neq 0, \frac{\hat{q}_0(\hat{\beta}_i)}{\hat{q}(\hat{\beta}_i)} \leq t, \beta_i = 0\right\} \xrightarrow{P} 
\mathbb{P}\left(|\Pi + \tau Z| > \alpha\tau, \frac{q_0(\eta_{\alpha \tau}(\Pi + \tau Z))}{q(\eta_{\alpha \tau}(\Pi + \tau Z))} \leq t, \Pi = 0\right)
\]
\end{lemma}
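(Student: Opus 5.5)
The plan is to follow the proof of Lemma~\ref{lem: lina: 2.4} almost verbatim, carrying the extra factor $\mathbf{1}\{\beta_i=0\}$ through every step. Multiplying by this indicator preserves all the monotone sandwich inequalities. By the triangle inequality, the difference between the left-hand side and the target probability is split into two terms. The first term replaces $\hat q_0/\hat q$ by the oracle ratio $q_0/q$ inside the indicator. The second term compares the oracle count $\frac1p\sum_i \mathbf{1}\{\hat\beta_i\neq0,\ q_0(\hat\beta_i)/q(\hat\beta_i)\le t,\ \beta_i=0\}$ with its limit.

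The second term is exactly $V^*(t;\lambda)/p$, by Assumption~\ref{assmptn:1} and the definition of $V^*$. By Lemma~\ref{lem:lina-1.4}, $V^*(t;\lambda)/p$ converges in probability to $\sum_m \mathbb{E}[t_{V_m}(\eta_{\alpha\tau}(\Pi+\tau Z),\Pi)]$. As in the proof of Theorem~\ref{thm:predictions-ol}, this limit equals $\mathbb{P}(|\Pi+\tau Z|>\alpha\tau,\ q_0/q(\eta_{\alpha\tau}(\Pi+\tau Z))\le t,\ \Pi=0)$. The event $|\Pi+\tau Z|>\alpha\tau$ is automatic here, because the intervals $(a_m(t),b_m(t))$ are separated from zero.

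For the first term, work on the high-probability event $A$ on which $\sup_i |\hat q_0(\hat\beta_i)/\hat q(\hat\beta_i)-q_0(\hat\beta_i)/q(\hat\beta_i)|\le\delta'$. This event comes from Propositions~\ref{prop:lina-2.2.a} and \ref{prop: lina:2.3} together with Lemma~\ref{lem:lina-2.2}, exactly as it was used in Lemma~\ref{lem: lina: 2.4}. On $A$ we obtain, for each $i$,
\[
\mathbf{1}\Bigl\{\hat\beta_i\neq0,\tfrac{q_0}{q}(\hat\beta_i)<t-\delta',\beta_i=0\Bigr\}\le \mathbf{1}\Bigl\{\hat\beta_i\neq0,\tfrac{\hat q_0}{\hat q}(\hat\beta_i)\le t,\beta_i=0\Bigr\}\le \mathbf{1}\Bigl\{\hat\beta_i\neq0,\tfrac{q_0}{q}(\hat\beta_i)\le t+\delta',\beta_i=0\Bigr\}.
\]
Averaging over $i$ and subtracting the oracle count at level $t$ bounds the first term by two boundary-layer quantities:
\[
\frac1p\sum_i \mathbf{1}\Bigl\{\hat\beta_i\neq0,\ t-\delta'<\tfrac{q_0}{q}(\hat\beta_i)\le t,\ \beta_i=0\Bigr\}
\]
and the analogous quantity with $t<\cdot\le t+\delta'$. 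Each is at most the corresponding count without the restriction $\beta_i=0$. Those unrestricted counts were already shown in the proof of Lemma~\ref{lem: lina: 2.4} to satisfy $\lim_{\delta'\to0}\lim_{p\to\infty}=0$ in probability. Alternatively, one can apply Lemma~\ref{lem:lina-1.4} directly at levels $t\pm\delta'$ and take differences.

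The main obstacle is the same as in Lemma~\ref{lem: lina: 2.4}, and there are two parts to it. First, the uniform approximation must hold on all nonzero $\hat\beta_i$, not just on the compact set $I=[-\mathcal{L},-\Delta]\cup[\Delta,\mathcal{L}]$. I would handle this by noting that the fraction of $\hat\beta_i$ with $0<|\hat\beta_i|<\Delta$ or $|\hat\beta_i|>\mathcal{L}$ can be made arbitrarily small, by Theorem 1.5 of \citet{bayati2012lasso} (via smoothed indicators) and by letting $\mathcal{L}\to\infty$. Only the contribution from $\hat\beta_i\in I$ then matters. Second, to use Lemma~\ref{lem:lina-1.4} at the levels $t\pm\delta'$, Assumption~\ref{assmptn:1} must hold there, which is the case for small $\delta'$ once $t<t_0$. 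Since these ingredients are inherited unchanged from Lemma~\ref{lem: lina: 2.4}, the proof reduces to checking that the null indicator commutes with each bound. It does, because $\mathbf{1}\{\beta_i=0\}\in\{0,1\}$ does not depend on the estimated densities.
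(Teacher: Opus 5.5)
Your route is the paper's. Its proof of this lemma is only ``similar to the proof of Lemma~\ref{lem: lina: 2.4}'', and your argument is that argument done a bit more economically: carry $\mathbf{1}\{\beta_i=0\}$ through the split, identify the oracle term with $V^*(t;\lambda)/p$ via Lemma~\ref{lem:lina-1.4}, and dominate the null-restricted boundary layers by the unrestricted ones.

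\textbf{The gap.} The step that fails is your patch for the issue you rightly flag. Uniform convergence of $\hat q,\hat q_0$ is only established on $I=[-\mathcal{L},-\Delta]\cup[\Delta,\mathcal{L}]$, while the event $A$ needs control at every nonzero $\hat\beta_i$.

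You propose to discard the coordinates with $0<|\hat\beta_i|<\Delta$ as a negligible fraction. But $\Delta$ is the fixed constant of Assumptions~\ref{assmptn:1}--\ref{assmptn:2}, and that fraction converges to $\mathbb{P}(0<|\eta_{\alpha\tau}(\Pi+\tau Z)|<\Delta)>0$. The oracle selects none of these coordinates, so you need $\hat q_0/\hat q>t$ for all but $o_P(p)$ of them. Nothing you invoke controls $\hat q$ on $(0,\Delta)$.

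There is a second problem with the same step. Theorem 1.5 with smoothed indicators cannot make $\frac1p\#\{i:0<|\hat\beta_i|<\Delta''\}$ small even for tiny $\Delta''$. Any continuous majorant of $\mathbf{1}\{0<|x|<\Delta''\}$ is at least $1$ at $x=0$, and the limit law has mass $1-w$ there.

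\textbf{The fix.} Shrink the compact set instead of arguing that the excluded region is small.
For $x>0$, $q(x)$ is the density of $\Pi+\tau Z$ at $x+\alpha\tau$, and symmetrically for $x<0$. So $q$ is continuous and strictly positive on $\mathbb{R}\setminus\{0\}$. Consequently, for every $0<\Delta''<\mathcal{L}$:
\begin{itemize}
\item Assumption~\ref{assmptn:2} holds on $[\Delta'',\mathcal{L}]$, so the proofs of Propositions~\ref{prop:lina-2.2.a} and~\ref{prop: lina:2.3} apply there.
\item $q$ is bounded below on $[\Delta'',\mathcal{L}]$, so the ratio $\hat q_0/\hat q$ converges uniformly there.
\end{itemize}
Then finish as follows:
\begin{itemize}
\item Run your sandwich over $\{\Delta''\le|\hat\beta_i|\le\mathcal{L}\}$. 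Coordinates in $[\Delta'',\Delta)$ then enter only through the upper boundary layer.
\item Drop $\{|\hat\beta_i|>\mathcal{L}\}$ by letting $\mathcal{L}\to\infty$.
\item Drop $\{0<|\hat\beta_i|<\Delta''\}$ using the bound $\lim_{\Delta''\to0}\limsup_{p}\mathbb{P}\bigl(\frac1p\#\{i:0<|\hat\beta_i|<\Delta''\}>\epsilon\bigr)=0$ from \citet{bogdan2013supplement}. This bound is already used in the proof of Lemma~\ref{lem:lina-2.1.1}.
\end{itemize}
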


\begin{proof}[Proof of Lemma~\ref{lem: lina: 2.5}]
Similar to the proof of \ref{lem: lina: 2.4}.

\end{proof}

\smallskip

\begin{proof}[Proof of Theorem~\ref{thm:predictions-eb}]
%
The statement \eqref{eq:tpp-oracle} follows from Lemma~\ref{lem: lina: 2.4} and Lemma~\ref{lem: lina: 2.5}, i.e 
\begin{align*}
    \operatorname{FDP}^{EB}(t; \lambda) \xrightarrow{P}     \frac{\mathbb{P}\left(|\Pi + \tau Z| > \alpha\tau, \frac{q_0(\eta_{\alpha \tau}(\Pi + \tau Z))}{q(\eta_{\alpha \tau}(\Pi + \tau Z))} \leq t, \Pi = 0\right)}{
\mathbb{P}\left(|\Pi + \tau Z| > \alpha\tau, \frac{q_0(\eta_{\alpha \tau}(\Pi + \tau Z))}{q(\eta_{\alpha \tau}(\Pi + \tau Z))} \leq t\right)} \equiv \operatorname{fdp}^*(t; \lambda)
\end{align*}
and 
\begin{align*}
\text{TPP}^{\text{EB}}(t; \lambda) &= \frac{\frac{1}{p} \sum_{i=1}^{p} \mathbf{1}\left\{\hat{\beta}_i \neq 0, \frac{\hat{q}_0(\hat{\beta}_i)}{\hat{q}(\hat{\beta}_i)} \leq t, \beta_i \neq 0\right\}}{\frac{1}{p} \sum_{i=1}^{p} \mathbf{1}(\beta_i \neq 0)}\\[2ex]
&=\frac{\frac{1}{p} \sum_{i=1}^{p} \mathbf{1}\left\{\hat{\beta}_i \neq 0, \frac{\hat{q}_0(\hat{\beta}_i)}{\hat{q}(\hat{\beta}_i)} \leq t\right\}-\frac{1}{p} \sum_{i=1}^{p} \mathbf{1}\left\{\hat{\beta}_i \neq 0, \frac{\hat{q}_0(\hat{\beta}_i)}{\hat{q}(\hat{\beta}_i)} \leq t, \beta_i = 0\right\}}{\frac{1}{p} \sum_{i=1}^{p} \mathbf{1}(\beta_i \neq 0)} \\[2ex]
 &\xrightarrow{P}\mathbb{P}(|\Pi + \tau Z| > \alpha\tau, \frac{q_0(\eta_{\alpha \tau}(\Pi + \tau Z))}{q(\eta_{\alpha \tau}(\Pi + \tau Z))} \leq t| \Pi \neq 0)\\
 &\equiv \operatorname{tpp}^*(t; \lambda).
 \end{align*}
\end{proof}

\begin{proof}[Proof of Theorem~\ref{thm:optimal-lambda}]
Define the random variables 
\begin{equation}
\label{eq:optimal-lambda-X}
X = \eta_{\alpha\tau}(\Xtil),\quad \quad \Xtil = \Pi + \tau Z,     
\end{equation}
where $\alpha,\tau$ are determined by the value of $\lambda\neq \lambda^*$ in the statement of the theorem, and consider the problem of testing 
\begin{equation}
\label{eq:optimal-lambda-H0}
H_0: \Pi = 0
\end{equation}
assuming only $X$ (not $\Xtil$) is observed. 
Note that $\Xtil$ cannot be recovered from $X$ in general (only if $X\neq 0$). 
In this univariate (Bayesian, since $\Pi$ is random) testing problem, let $\FPR = \PP(\lfdr(X)\leq t\lvert \Pi=0)$ and $\TPR = \PP(\lfdr(X)\leq t\lvert \Pi\neq0)$ be, respectively, the type I error probability and the power of the test that rejects when $\lfdr(X)\leq t$, where $\lfdr(\cdot)$ is the function in \eqref{lfdr}. 
Denote also $\FPR^*, \FPR^*$ the corresponding quantities for the test that rejects when $\lfdr^*(X)\leq t^*$. 
Throughout the proof, we refer to a test as a {\em level-$\xi$} test if it has power equal to $\xi$. 

We need to show that if $\TPR = \FPR^*=\xi$, then $\FPR^*\leq \FPR$. 
First, we claim that this would be true in the (hypothetical) problem where $\Xtil$ is observed instead of $X$:

\begin{proposition}
\label{prop:dominating}
Consider testing \eqref{eq:optimal-lambda-H0} in a setup where $\Xtil$ observed instead of $X$, and let $\widetilde{\FPR}$ and $\widetilde{\TPR}$ be the type I error probability and the power of the test that rejects \eqref{eq:optimal-lambda-H0} if $\widetilde{\lfdr}(\Xtil)\leq \tilde{t}$. 
If $\widetilde{\TPR}=\widetilde{\TPR}^*$, then $\widetilde{\FPR}^* \leq \widetilde{\FPR}$. 
\end{proposition}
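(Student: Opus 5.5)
The plan is to reduce the comparison to a Neyman--Pearson argument in a Gaussian sequence model, using the fact that $\lambda^*$ minimizes $\tau$. Write $\tau^*$ for the value of $\tau$ at $\lambda^*$ and $\tau$ for its value at the competing $\lambda$. By \eqref{eq:lambda-opt} and \eqref{eq:tau}, $\lambda^*$ minimizes $\tau(\lambda)$, so $\tau^*\leq\tau$.

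The key observation is that $\Xtil=\Pi+\tau Z$ is a \emph{garbling} of $\Xtil^*=\Pi+\tau^* Z$. Indeed, let $Z'\sim\calN(0,1)$ be independent of $(\Pi,Z)$ and set
\[
\Xtil' \;=\; \Xtil^* + \sqrt{\tau^2-(\tau^*)^2}\,Z' .
\]
Then $(\Pi,\Xtil')$ has the same joint law as $(\Pi,\Xtil)$. Consequently, the test $\mathbf 1\{\widetilde{\lfdr}(\Xtil)\leq\tilde t\}$ has the same type I error and power as the randomized test based on $\Xtil^*$ given by
\[
\rho(x) \;=\; \PP\big(\widetilde{\lfdr}(x+\sqrt{\tau^2-(\tau^*)^2}\,Z')\leq \tilde t\big)\in[0,1].
\]
Thus it suffices to show that, among all (possibly randomized) tests $\rho$ based on $\Xtil^*$ with power $\xi=\widetilde{\TPR}^*$, the test $\mathbf 1\{\widetilde{\lfdr}^*(\Xtil^*)\leq \tilde t^*\}$ has the smallest type I error.

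For that step I will reuse the ``reversed'' Neyman--Pearson argument from the proof of Theorem~\ref{thm:phi-optimal}. First, I note that $\widetilde{\lfdr}^*(x)=(1-\epsilon)\tilde f_0^*(x)/\big((1-\epsilon)\tilde f_0^*(x)+\epsilon\tilde f_1^*(x)\big)$ is a decreasing function of the likelihood ratio $\tilde f_1^*/\tilde f_0^*$. Here $\tilde f_0^*$ is the $\calN(0,(\tau^*)^2)$ density and $\tilde f_1^*$ is the density of $\Pi_1+\tau^* Z$; both are genuine Lebesgue densities, since no thresholding occurs. Hence the rejection region $\{\widetilde{\lfdr}^*\leq\tilde t^*\}$ equals $\{\tilde f_1^*/\tilde f_0^*\geq s^*\}$ for some $s^*$. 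Next I integrate $(\mathcal R^*(x)-\rho(x))(\tilde f_1^*(x)-s^*\tilde f_0^*(x))$. The integrand is pointwise nonnegative, because $\rho\in[0,1]$ and $\mathcal R^*$ is $1$ exactly where the second factor is nonnegative. Expanding the integral and using equal powers gives $s^*(\widetilde{\FPR}^*-\widetilde{\FPR})\leq 0$. If $s^*>0$ this yields the claim. If $s^*=0$, then $\mathcal R^*$ rejects almost everywhere where $\tilde f_1^*>0$; since this set is all of $\RR$ for Gaussian convolutions, the power is $1$, and the equality of powers forces $\rho=1$ a.e., so both sides are equal.

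The main point requiring care is that the garbled competitor is in general a \emph{randomized} test, so the optimality must be proved over randomized tests and not just indicator tests as in Theorem~\ref{thm:phi-optimal}. The Neyman--Pearson integral handles this directly, since only $0\leq\rho\leq1$ is used. A secondary check is that the equal-power hypothesis $\widetilde{\TPR}=\widetilde{\TPR}^*$ transfers exactly under the coupling. This holds because the coupling preserves the joint law of $(\Pi,\cdot)$, so both the power and the type I error of $\rho$ coincide with those of the original $\Xtil$-test. Ties on level sets of the likelihood ratio cause no difficulty: the pointwise inequality holds regardless of how $\mathcal R^*$ is defined where $\tilde f_1^*=s^*\tilde f_0^*$.
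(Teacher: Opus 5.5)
Your proposal is correct and follows the paper's route. The paper also builds the Gaussian garbling $\Xtil^*+\sqrt{\tau^2-(\tau^*)^2}\,Z'$, which turns the $\Xtil$-test into a randomized test based on $\Xtil^*$, and then invokes the reversed Neyman--Pearson argument; the only difference is that it argues by contradiction where you argue directly. Your explicit check that the Neyman--Pearson integral covers randomized tests $0\le\rho\le1$, and your treatment of the degenerate threshold $s^*=0$, fill in details the paper leaves implicit.
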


To prove Proposition \ref{prop:dominating}, suppose for contradiction there exists $\lambda$ such that $\widetilde{\FPR} < \widetilde{\FPR}^*$. 
Define 
\begin{equation}
\label{eq:prop:dominating-Y}
\tilde{Y}^* := \tilde{X}^* + \sqrt{\tau^2-(\tau^*)^2} \tilde{Z}, 
    \end{equation}
where $\tilde{Z}\sim \calN(0,1)$ independently of everything, where we use the fact that $\tau^*< \tau$. 
Then since $(\tilde{Y}^*, \Pi)$ has the same distribution as $(\tilde{X}, \Pi)$, the assumption $\widetilde{\FPR} < \widetilde{\FPR}^*$ implies that there is a test based on \eqref{eq:prop:dominating-Y}, which is a legitimate (randomized) test based on $\Xtil^*$, that has smaller type I error probability than the level-$\xi$ likelihood ratio (LR) test in $\Xtil^*$, which rejects when $\widetilde{\lfdr}(\Xtil^*)\leq \tilde{t}^*$ (because $\widetilde{\lfdr}(\tilde{x})$ is monotone in the LR test based on $\Xtil$). 
This is a contradiction to the Neyman-Pearson lemma (to be exact, the version included in the proof of Theorem \ref{thm:phi-optimal}). 

\medskip
Returning to the proof of the main claim, since $t^*\leq t_0^*$, we have 
$$
\lfdr^*(X)\leq t^*\ \iff\ \widetilde{\lfdr}^*(\tilde{X})\leq t^*, 
$$
so the level-$\xi$ oracle test based on $X^*$ has the same type I error probability as the level-$\xi$ oracle test based on $\Xtil^*$. 
Now, we have shown that the latter has smaller type I error probability than the level-$\xi$ LR test based on $\Xtil$. 
On the other hand, the level-$\xi$ LR test based on $\Xtil$ is better (has smaller FPR) than any other (i.e., not LR) level-$\xi$ test based on $\Xtil$, in particular better than the level-$\xi$ LR test based on $X$ (since any test based on $X$ is also a test based on $\Xtil$, as $X=\eta_{\alpha\tau}(\Xtil)$ is a function of $X$). 
\end{proof}

\end{document}